\newtheorem{theorem}{Theorem}
\theoremstyle{plain}
\newtheorem{definition}{Definition}
\newtheorem{lemma}{Lemma}
\newtheorem{proposition}{Proposition}
\newtheorem{remark}{Remark}
\numberwithin{equation}{section}
\numberwithin{theorem}{section}  
\numberwithin{proposition}{section}  
\numberwithin{lemma}{section}  
\numberwithin{corollary}{section}  
\begin{document}
\title[2D Cubic Focusing NLS from 2D $N$-Body Quantum]{The Rigorous
Derivation of the 2D Cubic Focusing NLS from Quantum Many-body Evolution}
\author{Xuwen Chen}
\address{Department of Mathematics, University of Rochester, Rochester, NY
14627}
\email{xuwen\_chen@brown.edu}
\urladdr{http://www.math.rochester.edu/people/faculty/xchen84/}
\author{Justin Holmer}
\address{Department of Mathematics, Brown University, 151 Thayer Street,
Providence, RI 02912}
\email{holmer@math.brown.edu}
\urladdr{http://www.math.brown.edu/\symbol{126}holmer/}
\date{v2, 09/28/2015}
\subjclass[2010]{Primary 35Q55, 35A02, 81V70; Secondary 35A23, 35B45, 81Q05.}
\keywords{BBGKY Hierarchy, Focusing Many-body Schr\"{o}dinger Equation,
Focusing Nonlinear Schr\"{o}dinger Equation (NLS), Quantum de Finetti Theorem%
}

\begin{abstract}
We consider a 2D time-dependent quantum system of $N$-bosons with harmonic
external confining and \emph{attractive} interparticle interaction in the
Gross-Pitaevskii scaling. We derive stability of matter type estimates
proving that the $k$-th power of the energy controls the $H^{1}$ Sobolev
norm of the solution over $k$ particles. This estimate is new and more
difficult for attractive interactions than repulsive interactions. For the
proof, we use a version of the finite-dimensional quantum de Finetti theorem
from \cite{LewinFocusing}. A high particle-number averaging effect is at
play in the proof, which is not needed for the corresponding estimate in the
repulsive case. This a priori bound allows us to prove that the
corresponding BBGKY hierarchy converges to the GP limit as was done in many
previous works treating the case of repulsive interactions. As a result, we
obtain that the \emph{focusing} nonlinear Schr\"{o}dinger equation is the
mean-field limit of the 2D time-dependent quantum many-body system with
attractive interatomic interaction and asymptotically factorized initial
data. An assumption on the size of the $L^{1}$-norm of the interatomic
interaction potential is needed that corresponds to the sharp constant in
the 2D Gagliardo-Nirenberg inequality though the inequality is not directly
relevant because we are dealing with a trace instead of a power.
\end{abstract}

\maketitle
\tableofcontents

\section{Introduction}

Bose-Einstein condensate (BEC) is a state of matter occurring in a dilute
gas of bosons (identical particles with integer spin) at very low
tempertures, where all particles fall into the lowest quantum state. This
form of matter was predicted in 1924 by Einstein, inspired by calculations
for photons by Bose. In 1995, BEC was first produced experimentally by
Cornell and Wieman \cite{Anderson} at the University of Colorado at Boulder
NIST--JILA lab, in a gas of rubidium cooled to 20 nK. Shortly thereafter,
Ketterle \cite{Ketterle} at MIT demonstrated important properties of a BEC
of sodium atoms. For this work, Cornell, Weiman, and Ketterle received the
2001 Nobel Prize in Physics\footnote{%
\url{http://www.nobelprize.org/nobel_prizes/physics/laureates/2001/press.html}%
.}. Since then, this new state of matter has attracted a lot of attention in
physics and mathematics as it can be used to explore fundamental questions
in quantum mechanics, such as the emergence of interference, decoherence,
superfluidity and quantized vortices.

Let us lay out the quantum mechanical description of the $N$-body problem.
Let $t\in \mathbb{R}$ be the time variable and $\mathbf{r}_{N}=(r_{1},\ldots
,r_{N})\in \mathbb{R}^{nN}$ be the position vector of $N$ particles in $%
\mathbb{R}^{n}$. The dynamic of $N$ bosons are described by a symmetric $N$%
-body wave function $\psi _{N}(\mathbf{r}_{N},t)$ evolving according to the
linear $N$-body Schr\"{o}dinger equation 
\begin{equation*}
i\partial _{t}\psi _{N}=H_{N}\psi _{N}
\end{equation*}
with Hamiltonian $H_{N}$ given by 
\begin{equation}
H_{N}=-\sum_{j=1}^{N}\Delta _{r_{j}}+\frac{1}{N}\sum_{1\leq i<j\leq
N}N^{n\beta }V(N^{\beta }(r_{i}-r_{j}))+\sum_{j=1}^{N}W(r_{j})
\label{E:intro-Ham}
\end{equation}%
where $V$ represents the interparticle attraction/repulsion and $W$
represents the external confining potential.

Informally, BEC means that, up to a phase factor depending only on $t$, the $%
N$-body wave function nearly factorizes 
\begin{equation}
\psi _{N}(\mathbf{r}_{N},t)\approx \prod_{j=1}^{N}\varphi (r_{j},t)
\label{E:BEC-informal}
\end{equation}%
In the simplest cases, where is it assumed that interactions between
condensate particles are of the contact two-body type and also anomalous
contributions to self-energy are neglected, it is widely believed, based
upon heuristic and formal calculations, that \eqref{E:BEC-informal} is valid
and the one-particle state $\varphi $ evolves according to the nonlinear Schr%
\"{o}dinger equation (NLS) 
\begin{equation}
i\partial _{t}\varphi =(-\Delta +W(r))\varphi +8\pi \mu |\varphi |^{2}\varphi
\label{E:NLS-intro}
\end{equation}%
This is one of the main motivations for studying the NLS equation, and there
is now a wide body of literature on well-posedness \cite{Cazenave, Tao}, the
long-time asymptotics of global-in-time solutions \cite{KillipVisan}, the
possibility and structure of finite-time blow-up solutions \cite{SulemSulem}%
, and the stability and dynamics of coherent solutions called solitary waves 
\cite{Weinstein, BusPer}. In particular, blow-up and solitary waves only
exist in the case of $\mu <0$, called the \emph{focusing} case.

Before proceeding, let us remark on the choice of scaling in the
interparticle interaction term. In 2D, it is taken as $N^{-1}V_{N}(r)$ in %
\eqref{E:intro-Ham}, where $V_{N}(r)=N^{2\beta }V(N^{\beta }r)$, for $\beta
>0$.\footnote{%
We consider the $\beta >0$ case solely in this paper. For $\beta =0$
(Hartree dynamic), see \cite%
{Frolich,E-Y1,KnowlesAndPickl,RodnianskiAndSchlein,MichelangeliSchlein,GMM1,GMM2,Chen2ndOrder,Ammari2,Ammari1}%
.} This scaling is intended to capture the so-called \emph{Gross-Pitaevskii
limit}, in which the ground-state asymptotics are described by the
one-particle Gross-Pitaevskii (GP) energy functional 
\begin{equation*}
\mathcal{E}(\varphi )=\int (|\nabla \varphi |^{2}+W|\varphi |^{2}+4\pi \mu
|\varphi |^{4})
\end{equation*}%
In the case of repulsive interactions $\mu >0$, in the stationary case, the
ground state energy asymptotics in the 2D Gross-Pitaevskii limit from the 2D 
$N$-body quantum setting, are discussed in \cite[Theorem 6.5]{Lieb2}. It is
found that $a_{N}$, the 2D scattering length of the microscopic interaction,
should scale as $a_{N}=N^{-1/2}e^{-N/2\mu }$. The \emph{scattering length}
associated to a potential is the radius of the hard-sphere potential that
gives the same low-wave number phase shifts as the given potential. A
precise definition in 2D is given in \cite[\S 9.3]{Lieb2}. If we take $%
V_{N}(x)=N^{2\beta }V(N^{\beta }x)$, then by \cite[Corollary 9.4]{Lieb2}
with $\lambda =(\int V)N^{-1}$ and $R=N^{-\beta }$, we have 
\begin{equation}
a_{N}\sim N^{-\beta }\exp \left( -\frac{4\pi N}{\int V}\left( 1+\eta
(N)\right) \right)  \label{E:ScatLengthScale}
\end{equation}%
where $\eta (N)\rightarrow 0$ as $N\rightarrow \infty $. Thus $\beta =\frac{1%
}{2}$ gives the correct $N$-dependence for $a_{N}$. Other values of $\beta $
could be produced by modifying $N^{2\beta }V(N^{\beta }r)$ to $(1+\frac{c\ln
N}{N})N^{2\beta }V(N^{\beta }r)$ for appropriate $c$, and thus changing $%
\beta $ corresponds to a lower-order correction in the scaling.\footnote{%
We note, in particular, that, unlike the 3D case, nothing special happens at 
$\beta =1$. Exponential in $N$ scaling would allow one to shift the value of 
$\mu $, but still would not yield the 2D scattering length of $V$ itself.}
Moreover, the analysis shows that we have $\mu =\frac{1}{8\pi }\int V$. The
corresponding time-dependent problem, for $\mu >0$, was studied by
Kirkpatrick-Schlein-Staffilani \cite{Kirpatrick} in the periodic setting and
by X.Chen \cite{ChenAnisotropic} in the trapping setting($W\neq 0$).

Another way to obtain a 2D limit is to start with a 3D quantum $N$-body
system with strong confining in one-dimension (say the $z$-direction). In
the stationary repulsive case, this was explored by Schnee-Yngvason \cite%
{SchneeYngvason}. If external confining in the $z$-direction is imposed to
give the system an effective width $\omega ^{-1/2}$, then one should take
the 3D interaction potential to be $(N\sqrt{\omega })^{3\beta -1}V((N\sqrt{%
\omega })^{\beta }r)$, where $r=(x,y,z)$, in place of the 2D interaction
potential $N^{2\beta -1}V(N^{\beta }r)$, where $r=(x,y)$, and take $\omega
\rightarrow \infty $ as $N\rightarrow \infty $. In the repulsive case ($\mu
>0$), the corresponding time-dependent problem was studied by X.Chen-Holmer 
\cite{C-H3Dto2D}. We will not consider the dimensional reduction problem
here.

As indicated earlier, one expects that the nonlinear coefficent $\mu$ in %
\eqref{E:NLS-intro} is given by $\mu = \frac{1}{8\pi} \int V$, or expressed
in terms of the scattering length $a_N$ of $N^{-1}V_N(r)$, the relation is $%
\mu = -N[\ln(N a_N^2)]^{-1}$.\footnote{%
Although the definition of scattering length in \cite[\S 9.3]{Lieb2} is
given in the case of repulsive potentials $V\geq 0$, it can be adjusted to
the case of attractive potentials $V \leq 0$, in which the requirement that $%
\psi(x) = \ln \frac{|x|}{a}$ for $|x|\to \infty$ is replaced with $\psi(x) =
- \ln \frac{|x|}{a}$ for $|x|\to \infty$. Then \eqref{E:ScatLengthScale} is
changed by the reciprocal, so we still obtain an exponentially \emph{small}
quantity, rather than an unphysical exponentially large quantity in $N$.}
The scattering length can be adjusted experimentally by the method of
Feshbach resonance, which exploits the hyperfine structure of the atoms in
the condensate. Specifically, we see that the sign of $\mu$ depends on $\int
V$, and that $\int V<0$ leads to \emph{focusing} NLS with $\mu<0$. BEC with $%
\mu<0$ has been produced in laboratory experiments \cite{Cornish, JILA2,
Streker, Khaykovich} in different contexts, and solitary waves and blow-up
have been observed. Thus there is strong motivation for determining whether
the mean-field approximation \eqref{E:NLS-intro} is theoretically valid in
different contexts.

We are concerned here with precise conditions under which %
\eqref{E:BEC-informal} and \eqref{E:NLS-intro} hold, and the rigorous
demonstration of this result. For our quantitive formulation of the $%
N\rightarrow \infty $ limit, we use the BBGKY framework. Specifically, let $%
\gamma _{N}$ be the projection operator in $L^{2}(\mathbb{R}^{2N})$ onto the
one-dimensional space spanned by $\psi _{N}$. The kernel is 
\begin{equation}
\gamma _{N}(t,\mathbf{r}_{N},\mathbf{r}_{N}^{\prime })=\psi _{N}(t,\mathbf{r}%
_{N})\overline{\psi _{N}(t,\mathbf{r}_{N}^{\prime })}  \label{E:density1}
\end{equation}%
Let $\gamma _{N}^{(k)}$ denote the trace of $\gamma _{N}$ over the last $%
(N-k)$ particles, called the $k$-th marginal density. Then $\gamma
_{N}^{(k)} $ is a trace-class operator on $L^{2}(\mathbb{R}^{2k})$ with
kernel given by 
\begin{equation}
\gamma _{N}^{(k)}(t,\mathbf{r}_{k},\mathbf{r}_{k}^{\prime })=\int_{\mathbf{r}%
_{N-k}}\gamma _{N}(t,\mathbf{r}_{k},\mathbf{r}_{N-k};\mathbf{r}_{k}^{\prime
},\mathbf{r}_{N-k})\,d\mathbf{r}_{N-k}  \label{E:density2}
\end{equation}%
In this language, \eqref{E:BEC-informal} becomes the informal statement 
\begin{equation*}
\gamma _{N}^{(k)}(t,\mathbf{r}_{k},\mathbf{r}_{k}^{\prime })\approx
\prod_{j=1}^{k}\varphi (r_{j},t)\overline{\varphi (r_{j}^{\prime },t)}
\end{equation*}%
Our main result demonstrates that this holds, in the sense of convergence as 
$N\rightarrow \infty $ in the trace norm. Our result covers the \emph{%
focusing} case in 2D, also known as the \emph{mass-critical focusing} case
in the NLS literature. Previous results either dealt with the defocusing
case in dimensions 1,2, or 3, or the focusing case in dimension 1 (obtained
either as limit from 1D or 3D quantum many-body dynamics).

\begin{definition}
We denote $C_{gn}$ the sharp constant of the 2D Gagliardo--Nirenberg
estimate:%
\begin{equation}
\left\Vert \phi \right\Vert _{L^{4}}\leqslant C_{gn}\left\Vert \phi
\right\Vert _{L^{2}}^{\frac{1}{2}}\left\Vert \nabla \phi \right\Vert
_{L^{2}}^{\frac{1}{2}}.  \label{eq:G-N estimate}
\end{equation}
\end{definition}

\begin{theorem}[Main Theorem]
\label{Thm:2D Derivation}Assume that the focusing pair interaction $V\ $is
an even nonpositive Schwartz class function such that $\left\Vert
V\right\Vert _{L^{1}}<\frac{2\alpha }{C_{gn}^{4}}$ for some $\alpha \in
\left( 0,1\right) $. Let $\psi _{N}\left( t,\mathbf{x}_{N}\right) $ be the $%
N-body$ Hamiltonian evolution $e^{itH_{N}}\psi _{N}(0)$, where 
\begin{equation}
H_{N}=\sum_{j=1}^{k}\left( -\triangle _{x_{j}}+\omega ^{2}\left\vert
x_{j}\right\vert ^{2}\right) +\frac{1}{N}\sum_{i<j}N^{2\beta }V(N^{\beta
}(x_{i}-x_{j}))  \label{Hamiltonian:2D N-body}
\end{equation}%
for some nonzero $\omega \in \mathbb{R}/\{0\}$ and for some $\beta \in
\left( 0,\frac{1}{6}\right) ,$ and let $\left\{ \gamma _{N}^{(k)}\right\} $
be the family of marginal densities associated with $\psi _{N}$. Suppose
that the initial datum $\psi _{N}(0)$ verifies the following conditions:

(a) the initial datum is normalized, that is 
\begin{equation*}
\left\Vert \psi _{N}(0)\right\Vert _{L^{2}}=1,
\end{equation*}

(b) the initial datum is asymptotically factorized, in the sense that,%
\begin{equation}
\lim_{N\rightarrow \infty }\limfunc{Tr}\left\vert \gamma
_{N}^{(1)}(0,x_{1};x_{1}^{\prime })-\phi _{0}(x_{1})\overline{\phi _{0}}%
(x_{1}^{\prime })\right\vert =0,  \label{eqn:asym factorized}
\end{equation}%
for some one particle wave function $\phi _{0}$ s.t. $\left\Vert \left(
-\triangle _{x}+\omega ^{2}\left\vert x\right\vert ^{2}\right) ^{\frac{1}{2}%
}\phi _{0}\right\Vert _{L^{2}\left( \mathbb{R}^{2}\right) }<\infty $.

(c) initially, each particle's energy, though may not be positive, is
bounded above 
\begin{equation}
\sup_{N}\frac{1}{N}\left\langle \psi _{N}(0),H_{N}\psi _{N}(0)\right\rangle
<\infty .  \label{Condition:FiniteKineticOnManyBodyInitialData}
\end{equation}%
Then $\forall t\geqslant 0$, $\forall k\geqslant 1$, we have the convergence
in the trace norm or the propagation of chaos that%
\begin{equation*}
\lim_{N\rightarrow \infty }\limfunc{Tr}\left\vert \gamma _{N}^{(k)}(t,%
\mathbf{x}_{k};\mathbf{x}_{k}^{\prime })-\dprod_{j=1}^{k}\phi (t,x_{j})%
\overline{\phi }(t,x_{j}^{\prime })\right\vert =0,
\end{equation*}%
where $\phi (t,x)$ is the solution to the 2D focusing cubic NLS%
\begin{eqnarray}
i\partial _{t}\phi &=&\left( -\triangle _{x}+\omega ^{2}\left\vert
x\right\vert ^{2}\right) \phi -b_{0}\left\vert \phi \right\vert ^{2}\phi 
\text{ in }\mathbb{R}^{2+1}  \label{equation:TargetCubicNLS} \\
\phi (0,x) &=&\phi _{0}(x)  \notag
\end{eqnarray}%
and the coupling constant $b_{0}=\left\vert \int_{\mathbb{R}%
^{2}}V(x)dx\right\vert .$
\end{theorem}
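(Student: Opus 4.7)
My plan is to follow the standard three-step compactness-and-uniqueness strategy for deriving mean-field limits (a priori estimate $\Rightarrow$ BBGKY convergence to the Gross--Pitaevskii hierarchy $\Rightarrow$ uniqueness), with the crucial new ingredient being an energy estimate tailored to the \emph{focusing} case.

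\textbf{Step 1: $H^{1}$ a priori bound on the marginals.} The main analytical obstacle is that $H_{N}$ is no longer manifestly positive: the attractive pair interaction can swallow a positive fraction of the kinetic energy. I would aim to prove a stability-of-matter type inequality of the form
\begin{equation*}
\langle \psi_{N}, (H_{N}+c_{0}N)^{k} \psi_{N}\rangle \;\ge\; (c_{1}N)^{k}\, \tr\!\Bigl(\prod_{j=1}^{k}\bigl(1-\Delta_{x_{j}}+\omega^{2}|x_{j}|^{2}\bigr)\,\gamma_{N}^{(k)}\Bigr)
\end{equation*}
uniformly in $N$, with constants $c_{0},c_{1}>0$ depending only on $\|V\|_{L^{1}}$, $\alpha$, and $\omega$. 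Combined with conservation of the energy and hypothesis \eqref{Condition:FiniteKineticOnManyBodyInitialData}, this yields uniform-in-$N$ weighted trace control on $\gamma_{N}^{(k)}(t)$ for all $t$. The mechanism is the finite-dimensional quantum de Finetti theorem of \cite{LewinFocusing}: the two-particle marginal $\gamma_{N}^{(2)}$ can be approximated by a convex combination of product states $|\varphi\rangle\langle\varphi|^{\otimes 2}$, on which the pair-interaction energy obeys
\begin{equation*}
\int (V_{N}\!\ast|\varphi|^{2})\,|\varphi|^{2}\;\le\;\|V\|_{L^{1}}\|\varphi\|_{L^{4}}^{4}\;\le\;\|V\|_{L^{1}}C_{gn}^{4}\|\varphi\|_{L^{2}}^{2}\|\nabla\varphi\|_{L^{2}}^{2}\;\le\;2\alpha\|\nabla\varphi\|_{L^{2}}^{2},
\end{equation*}
which leaves a fraction $(1-\alpha)$ of the one-body kinetic energy intact. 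Iterating this structure so it applies at the $k$-marginal level, compatibly with the harmonic trap $\omega^{2}|x|^{2}$, is where the high-particle-number averaging is essential, and this is where I expect the main obstacle of the proof to lie.

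\textbf{Step 2: BBGKY convergence to the focusing GP hierarchy.} Writing the BBGKY system satisfied by $\{\gamma_{N}^{(k)}\}$ and invoking the estimate of Step 1 for weak-$\ast$ compactness in the appropriate weighted trace topology, I would extract a subsequential limit $\gamma^{(k)}$ for every $k$. The hypothesis $\beta\in(0,1/6)$ allows a Klainerman--Machedon or Strichartz-type smoothing argument to pass to the limit in the interaction term: because $V\le 0$ and $N^{2\beta}V(N^{\beta}\cdot)\to-b_{0}\delta$, the limit hierarchy is
\begin{equation*}
i\partial_{t}\gamma^{(k)}=\sum_{j=1}^{k}\bigl[-\Delta_{x_{j}}+\omega^{2}|x_{j}|^{2},\gamma^{(k)}\bigr]-b_{0}\sum_{j=1}^{k}\tr_{k+1}\!\bigl[\delta(x_{j}-x_{k+1}),\gamma^{(k+1)}\bigr].
\end{equation*}

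\textbf{Step 3: Uniqueness and trace-norm convergence.} The factorized family $\gamma^{(k)}(t)=|\phi(t)\rangle\langle\phi(t)|^{\otimes k}$, with $\phi$ the solution of \eqref{equation:TargetCubicNLS}, is a solution of the above GP hierarchy with the initial data from \eqref{eqn:asym factorized}. A uniqueness theorem for the focusing GP hierarchy in the energy class (via the quantum de Finetti/board-game machinery) then identifies the weak limit with this product. Since the limit is a rank-one projection, weak-$\ast$ convergence of trace-class operators together with convergence of traces automatically upgrades to convergence in trace norm, yielding the claimed propagation of chaos. The subcritical assumption $\|V\|_{L^{1}}<2\alpha/C_{gn}^{4}$ is used both in Step 1 (to produce coercivity) and indirectly in Step 3 (so that the limiting NLS \eqref{equation:TargetCubicNLS} is globally well-posed in the energy class, ensuring the candidate limit exists for all $t\ge 0$).
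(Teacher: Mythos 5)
Your overall strategy (stability-of-matter estimate via finite-dimensional quantum de Finetti plus Gagliardo--Nirenberg on product states, then BBGKY compactness, convergence to the focusing GP hierarchy, and uniqueness) is exactly the paper's, and Steps 2 and 3 are essentially the standard machinery the paper also invokes. However, there is a genuine gap in how you connect Step 1 to the hypotheses of \emph{this} theorem. The theorem assumes only condition (c), i.e.\ $\sup_{N}N^{-1}\langle\psi_{N}(0),H_{N}\psi_{N}(0)\rangle<\infty$, which controls only the \emph{first} moment of the energy. Your Step 1 inequality $\langle\psi_{N},(H_{N}+c_{0}N)^{k}\psi_{N}\rangle\geq(c_{1}N)^{k}\tr(\cdots\gamma_{N}^{(k)})$ produces a useful bound on the $k$-th marginal only if one also knows $\langle\psi_{N}(0),H_{N}^{k}\psi_{N}(0)\rangle\leq C^{k}N^{k}$ for every $k$; conservation of energy propagates these moments in time but does not create them at $t=0$. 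Condition (c) gives no control whatsoever on $H_{N}^{k}\psi_{N}(0)$ for $k\geq2$, so your claim that ``conservation of the energy and hypothesis (c)'' yield uniform weighted trace control on $\gamma_{N}^{(k)}(t)$ for all $k$ does not follow. The paper resolves this by first proving the theorem under the stronger hypothesis (c') (all moments bounded), and then reducing (c) to (c') by regularizing the initial data: one sets $\psi_{N}^{\kappa}(0)=\chi(\kappa H_{N}/N)\psi_{N}(0)/\Vert\chi(\kappa H_{N}/N)\psi_{N}(0)\Vert$, checks that this cutoff datum satisfies all moment bounds, remains asymptotically factorized, and is $O(\kappa^{1/2})$-close in $L^{2}$ to $\psi_{N}(0)$ uniformly in $N$, and finally sends $\kappa\to0$ after $N\to\infty$. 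Without this (or an equivalent) approximation step your argument does not close under hypothesis (c) alone.

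A secondary inaccuracy: you attribute the restriction $\beta\in(0,1/6)$ to the passage to the limit in the interaction term in Step 2. In fact the limit of the interaction term goes through for all $\beta\in(0,1)$ by a standard comparison-with-$\delta$ lemma; the constraint $\beta<1/6$ is consumed entirely in Step 1, where the de Finetti approximation error is of size $\Vert H_{12,\alpha}^{\varepsilon}\Vert_{\op}D_{M}/N\sim N^{6\beta-1}$ (with $M\sim N^{\beta}$ forced by the Littlewood--Paley localization of $V_{N}$, $D_{M}\sim M^{4}$ the dimension of the truncated Hermite eigenspace, and $\Vert H_{12,\alpha}^{\varepsilon}\Vert_{\op}\sim N^{2\beta}$), and this must tend to zero for the large-$N$ averaging to beat the growing singularity of $V_{N}$.
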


Theorem \ref{Thm:2D Derivation} is equivalent to the following theorem.

\begin{theorem}[Main Theorem]
\label{THM:Main Theorem}Assume that the focusing pair interaction $V\ $is an
even nonpositive Schwartz class function such that $\left\Vert V\right\Vert
_{L^{1}}<\frac{2\alpha }{C_{gn}^{4}}$ for some $\alpha \in \left( 0,1\right) 
$. Let $\psi _{N}\left( t,\mathbf{x}_{N}\right) $ be the $N-body$
Hamiltonian evolution $e^{itH_{N}}\psi _{N}(0)$ with $H_{N}$ given by (\ref%
{Hamiltonian:2D N-body}) for some nonzero $\omega \in \mathbb{R}/\{0\}$ and
for some $\beta \in \left( 0,1/6\right) ,$ and let $\left\{ \gamma
_{N}^{(k)}\right\} $ be the family of marginal densities associated with $%
\psi _{N}$. Suppose that the initial datum $\psi _{N}(0)$ is normalized and
asymptotically factorized in the sense of (a) and (b) in Theorem \ref{Thm:2D
Derivation} and verifies the following energy condition:

(c') there is a $C>0$ independent of $N$ or $k$ such that 
\begin{equation}
\left\langle \psi _{N}(0),H_{N}^{k}\psi _{N}(0)\right\rangle <C^{k}N^{k},%
\text{ }\forall k\geqslant 1,
\label{Condition:EnergyBoundOnManyBodyInitialData}
\end{equation}%
though the quantity $\left\langle \psi _{N}(0),H_{N}^{k}\psi
_{N}(0)\right\rangle $ may not be positive.

Then $\forall t\geqslant 0$, $\forall k\geqslant 1$, we have the convergence
in the trace norm or the propagation of chaos that%
\begin{equation*}
\lim_{N\rightarrow \infty }\limfunc{Tr}\left\vert \gamma _{N}^{(k)}(t,%
\mathbf{x}_{k};\mathbf{x}_{k}^{\prime })-\dprod_{j=1}^{k}\phi (t,x_{j})%
\overline{\phi }(t,x_{j}^{\prime })\right\vert =0,
\end{equation*}%
where $\phi (t,x)$ is the solution to the 2D focusing cubic NLS (\ref%
{equation:TargetCubicNLS}).
\end{theorem}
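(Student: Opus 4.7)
The plan is to follow the BBGKY hierarchy framework of Spohn and Elgart--Erd{\H o}s--Schlein--Yau: I will derive from the $N$-body Schr\"odinger equation a coupled hierarchy for the marginals $\{\gamma_{N}^{(k)}\}$, then (i) establish a uniform-in-$N$ a priori bound controlling $k$ factors of the one-particle energy on $\gamma_{N}^{(k)}(t)$, (ii) extract a weak-$*$ subsequential limit $\gamma_{\infty}^{(k)}$ and show it solves the focusing infinite Gross--Pitaevskii (GP) hierarchy with coupling $-b_{0}$, and (iii) prove uniqueness of that hierarchy with the given factorized initial data. Because $\prod_{j=1}^{k}\phi(t,x_{j})\overline{\phi(t,x_{j}')}$ solves the hierarchy whenever $\phi$ solves \eqref{equation:TargetCubicNLS}, uniqueness identifies the limit; weak-$*$ convergence then upgrades to trace-norm convergence via the Gr\"umm/Robinson lemma, once one checks that the traces converge.

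The expected main obstacle is step (i). The target estimate
\[
\tr\Bigl(\prod_{j=1}^{k}\bigl(-\Delta_{x_{j}}+\omega^{2}|x_{j}|^{2}\bigr)\Bigr)\gamma_{N}^{(k)}(t)\le C^{k}
\]
does not follow from \eqref{Condition:EnergyBoundOnManyBodyInitialData} by the usual repulsive argument, because the attractive interaction can absorb essentially all of the kinetic energy at individual particle configurations. My approach is to invoke the finite-dimensional quantum de Finetti theorem of Lewin--Nam--Rougerie, which represents the symmetric $k$-marginal as an approximate convex combination $\int |\varphi\rangle\langle\varphi|^{\otimes k}\,d\mu(\varphi)$. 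On each product, the per-particle interaction approaches $\tfrac{1}{2}|\int V|\,\|\varphi\|_{L^{4}}^{4}$, and the sharp Gagliardo--Nirenberg inequality \eqref{eq:G-N estimate} yields
\[
\tfrac{1}{2}\|V\|_{L^{1}}\|\varphi\|_{L^{4}}^{4}\le\tfrac{1}{2}\|V\|_{L^{1}}C_{gn}^{4}\|\varphi\|_{L^{2}}^{2}\|\nabla\varphi\|_{L^{2}}^{2}<\alpha\|\nabla\varphi\|_{L^{2}}^{2},
\]
by the hypothesis $\|V\|_{L^{1}}<2\alpha/C_{gn}^{4}$. Thus the per-particle energy controls $(1-\alpha)\|\nabla\varphi\|_{L^{2}}^{2}+\omega^{2}\||x|\varphi\|_{L^{2}}^{2}$ pointwise in $\varphi$, and integrating against $\mu$ after raising to the $k$-th power transforms \eqref{Condition:EnergyBoundOnManyBodyInitialData} into the desired Sobolev control. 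This is the high-particle-number averaging effect emphasized in the abstract; it renders the attractive case tractable despite the wrong-sign interaction. The restriction $\beta<1/6$ enters precisely when controlling the error between $N^{2\beta}V(N^{\beta}\cdot)$ and its delta limit inside the de Finetti substitution. Propagation in time is automatic because $H_{N}^{k}$ commutes with $e^{itH_{N}}$, so the $t=0$ bound transfers to every $t$.

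Given the a priori bound, steps (ii) and (iii) follow the 2D repulsive template of Kirkpatrick--Schlein--Staffilani and X.~Chen in the trapping setting. The BBGKY hierarchy for $\gamma_{N}^{(k)}$ has two interaction contributions: internal pair commutators of size $O(N^{-1})$ by the Sobolev bound and $\beta<1/6$, and the coupling term to $\gamma_{N}^{(k+1)}$ in which $N^{2\beta}V(N^{\beta}\cdot)\rightharpoonup -b_{0}\delta$ distributionally. Compactness from the uniform Sobolev bound in a weak-$*$ topology, combined with an Arzel\`a--Ascoli argument in time, produces a limit $\gamma_{\infty}^{(k)}$ satisfying the focusing infinite GP hierarchy
\[
i\partial_{t}\gamma_{\infty}^{(k)}=\sum_{j=1}^{k}\bigl[-\Delta_{x_{j}}+\omega^{2}|x_{j}|^{2},\gamma_{\infty}^{(k)}\bigr]-b_{0}\sum_{j=1}^{k}B_{j,k+1}\gamma_{\infty}^{(k+1)},
\]
where $B_{j,k+1}$ is the standard contact contraction. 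Uniqueness for this limit hierarchy with factorized data can be secured either by the Klainerman--Machedon board game combined with harmonic-oscillator Strichartz/spacetime bounds, or alternatively by the quantum de Finetti uniqueness argument of Chen--Hainzl--Pavlovi\'c--Seiringer; both routes only require the a priori bound produced in step (i). Hence, once the attractive Sobolev estimate is in place, the remainder of the derivation is a structural adaptation of the existing repulsive 2D theory.
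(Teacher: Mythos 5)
Your overall architecture (BBGKY hierarchy, compactness/convergence/uniqueness, Gr\"umm upgrade, and the de Finetti plus sharp Gagliardo--Nirenberg mechanism for the attractive energy bound) matches the paper, and your $k=1$ reasoning is essentially the paper's: localize to Hermite frequencies $\leqslant M\sim N^{\beta}/\varepsilon$ (this is where $\omega \neq 0$ is indispensable, since only then is $P_{\leqslant M}L^{2}$ finite dimensional), apply the finite-dimensional de Finetti theorem to $P_{\leqslant M}^{(2)}\gamma _{N}^{(2)}P_{\leqslant M}^{(2)}$, verify nonnegativity of the Hartree energy on products via \eqref{eq:G-N estimate}, and absorb the de Finetti error $\Vert H_{12,\alpha }^{\varepsilon }\Vert _{\op }\cdot 8D_{M}/N\sim N^{6\beta -1}$, whence $\beta <1/6$.

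The genuine gap is your treatment of $k>1$. ``Integrating against $\mu $ after raising to the $k$-th power'' does not work as stated, for two reasons. First, if you apply the finite-dimensional de Finetti theorem directly to the $2k$-body operator obtained by expanding $(N^{-1}H_{N}+1)^{k}$, the error is (operator norm of the localized $k$-fold product)$\,\times\,$(dimension)$\,/N\sim N^{2k\beta }\cdot N^{4\beta }/N$, which forces $\beta <1/(2k+4)$ --- a $k$-dependent restriction that cannot yield the theorem for all $k$ at a fixed $\beta <1/6$. Second, and more fundamentally, the quantity to be bounded is $\Vert S^{(k)}\psi _{N}\Vert _{L^{2}}^{2}$, and commuting the derivative operators $S^{(k)}$ past the interaction terms in the expansion of $(N^{-1}H_{N}+1)^{k}$ generates $(\nabla V)_{N}\sim N^{3\beta }$ singularities together with cross terms having overlapping particle indices; controlling these is the bulk of the paper's work (\S \ref{Sec:EnergyEstimate:k=k+2}) and is entirely absent from your sketch. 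The paper's actual route is an induction $k\rightarrow k+2$: the de Finetti machinery is invoked only once, to prove the two-body operator positivity of Theorem \ref{thm:pre 1 k=1 energy} (equivalently $\langle \psi _{N},(2+H_{12})\psi _{N}\rangle \geqslant 2(1-\alpha )\Vert S_{1}\psi _{N}\Vert _{L^{2}}^{2}$), and the induction step decomposes the double sum over pairs into a main term (disjoint index pairs, handled by the two-body positivity and a lemma on products of commuting nonnegative operators) and cross error terms (overlapping indices, handled by H\"older and 2D Sobolev with an endpoint loss), showing the errors are $O(N^{\beta -1+}+N^{2\beta -\frac{3}{2}+})$ relative to the main term. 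You need to supply this induction, or an equivalent argument, for the proof to be complete.
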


It follows from the fact that $\psi _{N}$ evolves according to $i\partial
_{t}\psi _{N}=H_{N}\psi _{N}$ and the definition \eqref{E:density1}, %
\eqref{E:density2} of the marginal densities $\gamma _{N}^{(k)}$ that 
\begin{eqnarray*}
i\partial _{t}\gamma _{N}^{(k)} &=&\sum_{j=1}^{k}\left[ -\bigtriangleup
_{x_{j}}+\omega ^{2}\left\vert x_{j}\right\vert ^{2},\gamma _{N}^{(k)}\right]
+\frac{1}{N}\sum_{1\leqslant i<j\leqslant k}\left[ V_{N}(x_{i}-x_{j}),\gamma
_{N}^{(k)}\right] \\
&&+\frac{N-k}{N}\sum_{j=1}^{k}\limfunc{Tr}\nolimits_{k+1}\left[
V_{N}(x_{j}-x_{k+1}),\gamma _{N}^{(k+1)}\right] ,
\end{eqnarray*}%
This coupled sequence of equations is called the BBGKY hierarchy. The use of
the BBGKY hierarchy in the quantum setting was suggested by Spohn \cite%
{Spohn} and has been employed in rigorous work by Adami, Golse, \& Teta \cite%
{AGT} and Elgart, Erd\"{o}s, Schlein, \& Yau \cite{E-E-S-Y1,
E-S-Y1,E-S-Y2,E-S-Y5, E-S-Y3}. The latter series of works rigorously derives
the 3D cubic defocusing NLS from a 3D time-dependent quantum many-body
system with repulsive pair interactions and no trapping ($\omega =0$). Their
program consists of two main steps.\footnote{%
See \cite{SchleinNew,GM1,Pickl,Kuz} for different approaches.} First, they
derive $H^{1}$-energy type a priori estimates for the $N$-body Hamiltonian
from which a compactness property, for each $k$, of the sequence $\{\,\gamma
_{N}^{(k)}\,\}_{N=1}^{+\infty }$ follows, yielding limit points $\gamma
^{(k)}$ solving the 3D Gross-Pitaevskii hierarchy 
\begin{equation}
i\partial _{t}\gamma ^{(k)}+\sum_{j=1}^{k}\left[ \triangle _{r_{k}},\gamma
^{(k)}\right] =b_{0}\sum_{j=1}^{k}\limfunc{Tr}\nolimits_{r_{k+1}}[\delta
(r_{j}-r_{k+1}),\gamma ^{(k+1)}],\text{ for all }k\geq 1\,.
\label{equation:Gross-Pitaevskii hiearchy without a trap}
\end{equation}%
Second, they show that 
\eqref{equation:Gross-Pitaevskii
hiearchy without a trap} has a unique solution which satisfies the $H^{1}$%
-energy type a priori estimates obtained in the first step. Since a compact
sequence with a unique limit point is, in fact, a convergent sequence, it
follows that (in an appropriate weak sense) solutions to the BBGKY hierarchy 
$\gamma _{N}^{(k)}$ converge to solutions to the GP hierarchy $\gamma ^{(k)}$%
. Moreover, it is easily verified that a tensor product of solutions of NLS %
\eqref{E:NLS-intro} solves the GP hierarchy, and hence this is the unique
solution.

In the defocusing literature, a major difficulty is that the uniqueness
theory for the hierarchy 
\eqref{equation:Gross-Pitaevskii hiearchy
without a trap} is surprisingly delicate due to the fact that it is a system
of infinitely many coupled equations over an unbounded number of variables
and there has been much work on it. Klainerman \& Machedon \cite%
{KlainermanAndMachedon} gave a Strichartz type uniqueness theorem using a
collapsing estimate originating from the multilinear Strichartz estimates
and a board game argument inspired by the Feynman graph argument in \cite%
{E-S-Y2}. The method by Klainerman \& Machedon \cite{KlainermanAndMachedon}
was taken up by Kirkpatrick, Schlein, \& Staffilani \cite{Kirpatrick}, who
derived the 2D cubic defocusing NLS from the 2D time-dependent quantum
many-body system; by T. Chen \& Pavlovi\'{c} \cite{TChenAndNP}, who
considered the 1D and 2D 3-body repelling interaction problem; by X. Chen 
\cite{ChenAnisotropic, Chen3DDerivation}, who investigated the defocusing
problem with trapping in 2D and 3D; by X. Chen \& Holmer \cite{C-H3Dto2D},
who proved the effectiveness of the defocusing 3D to 2D reduction problem,
and by T.Chen \& Pavlovi\'{c} \cite{TChenAndNPSpace-Time} and X.Chen \&
Holmer \cite{Chen3DDerivation,C-H2/3,C-H<1}, who proved the Strichartz type
bound conjectured by Klainerman \& Machedon. Such a method has also inspired
the study of the general existence theory of hierarchy %
\eqref{equation:Gross-Pitaevskii hiearchy without a trap}, see \cite{TCNPNT,
TChenAndNpGP1,Sohinger,SoSt13}. Recently, using a version of the quantum de
Finetti theorem from \cite{Lewin}\footnote{%
See also \cite{Ammari1,Ammari2}.}, T.Chen, Hainzl, Pavlovi\'{c}, \&
Seiringer \cite{TCNPdeFinitte}\ provided an alternative proof to the
uniqueness theorem in \cite{E-S-Y2} and showed that it is an unconditional
uniqueness result in the sense of NLS theory. With this method, Sohinger
derived the 3D defocusing cubic NLS in the periodic case \cite{Sohinger3}.
See also \cite{C-PUniqueness,HoTaXi14}.

However, for the focusing case, things are different. How to obtain the
needed $H^{1}$-energy type a priori estimates is the central question. To be
precise, without such a priori estimates, one \emph{cannot} check the
requirements of the various uniqueness theorems \cite%
{E-S-Y2,KlainermanAndMachedon,Kirpatrick,ChenAnisotropic,
Chen3DDerivation,Sohinger3,TCNPdeFinitte,C-PUniqueness,HoTaXi14}\ at all.%
\footnote{%
In fact, one of the authors of \cite{E-E-S-Y1, E-S-Y1,E-S-Y2,E-S-Y5, E-S-Y3}
remarked the a priori bound was the most delicate part in the defocusing
case as well when the results were revisited in \cite{SchleinNew}.} It is
already highly nontrivial and may not be possible to even prove the weaker
type II stability of matter estimate%
\begin{equation}
\left\langle \psi _{N},H_{N}\psi _{N}\right\rangle \geqslant -CN\text{ for
all }\psi _{N}\in L_{s}^{2}(\mathbb{R}^{nN})  \label{eq:stab}
\end{equation}%
when $H_{N}$ is given by (\ref{E:intro-Ham}) with $V<0$ while it is
obviously true when $V\geqslant 0$. The first complete work on the focusing
problem was done by X.Chen and Holmer \cite{C-HFocusing,C-HFocusingII} for
the time-dependent $1$D problem. The key is to explore the structure of the
2-body operator%
\begin{equation}
H_{+ij}=-\Delta _{r_{i}}+W(r_{i})-\Delta _{r_{j}}+W(r_{j})+\frac{N-1}{N}%
N^{n\beta }V(N^{\beta }(r_{i}-r_{j}))  \label{def:2-body}
\end{equation}%
generated in the decomposition of $H_{N}$. Such a technique was later used
independently by Lewin, Nam, \& Rougerie in \cite{LewinFocusing}, where they
investigated the ground state problem in the focusing setting. The main
portion of this paper is devoted to this problem in 2D. In particular, we
prove

\begin{theorem}
\label{thm:main energy}Consider the focusing many-body Hamiltonian%
\begin{equation*}
H_{N}=\sum_{j=1}^{k}\left( -\triangle _{x_{j}}+\omega ^{2}\left\vert
x_{j}\right\vert ^{2}\right) +\frac{1}{N}\sum_{i<j}N^{2\beta }V(N^{\beta
}(x_{i}-x_{j})),
\end{equation*}%
in 2D. Assume $\omega >0,$ $\beta <\frac{1}{6},$ and $\left\Vert
V\right\Vert _{L^{1}}<\frac{2\alpha }{C_{gn}^{4}}$ for some $\alpha \in
\left( 0,1\right) $, then let $c_{0}=\min (\frac{1-\alpha }{\sqrt{2}},\frac{1%
}{2})$, we have $\forall k=0,1,...$, there is an $N_{0}(k)>0$ such that 
\begin{equation}
\left\langle \psi _{N},\left( N^{-1}H_{N}+1\right) ^{k}\psi
_{N}\right\rangle \geqslant c_{0}^{k}\left\Vert S^{(k)}\psi _{N}\right\Vert
_{L^{2}}^{2}\text{, }  \label{eq:main energy estimate}
\end{equation}%
for all $N>N_{0}(k)$ and for all $\psi _{N}\in L_{s}^{2}(\mathbb{R}^{2N})$.
Here 
\begin{equation*}
S^{(k)}=\dprod\limits_{j=1}^{k}S_{j}
\end{equation*}%
and $S_{j}^{2}$ is the Hermite operator $-\triangle _{x_{j}}+\omega
^{2}\left\vert x_{j}\right\vert ^{2}$.
\end{theorem}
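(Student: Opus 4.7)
The plan is to induct on $k$. For the base case $k=1$, the target is the stability-of-matter estimate
\[
\langle \psi_N, (N^{-1}H_N+1)\psi_N\rangle \geq c_0 \|S_1\psi_N\|_{L^2}^2 = \frac{c_0}{N}\sum_{j=1}^N \|S_j\psi_N\|_{L^2}^2,
\]
and because the interaction is attractive, this cannot be extracted by pointwise positivity of the two-body operators; one must average across particles via a quantum de Finetti argument. I would first symmetrize the Hamiltonian,
\[
H_N = \frac{1}{N-1}\sum_{i<j}H_{+ij}, \qquad H_{+ij}=S_i^2+S_j^2+\frac{N-1}{N}V_N(x_i-x_j),
\]
and split $\psi_N = \psi_N^{\leq\Lambda}+\psi_N^{>\Lambda}$ by the spectral projection of $\sum_j S_j^2$ at a threshold $\Lambda$. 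On the high-energy piece, the kinetic term dominates $\frac{1}{N}\sum_{i<j}V_N(x_i-x_j)$ by a direct Sobolev/GN bound using $\beta<1/6$. On the low-energy piece the finite-dimensional quantum de Finetti theorem of Lewin--Nam--Rougerie \cite{LewinFocusing}, applied to $\gamma_{N,\leq\Lambda}^{(2)}$, produces a probability measure $\mu$ with $\gamma_{N,\leq\Lambda}^{(2)} \approx \int |\phi\rangle\langle\phi|^{\otimes 2}\,d\mu(\phi)$ in trace norm, up to an error $O(d_\Lambda/N)$, where $d_\Lambda$ is the dimension of the truncated one-body space.

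For each product test state,
\[
\langle\phi^{\otimes 2},H_{+12}\phi^{\otimes 2}\rangle = 2\|S\phi\|_{L^2}^2 + \frac{N-1}{N}\int V_N(x-y)|\phi(x)|^2|\phi(y)|^2\,dx\,dy.
\]
Since $V\leq 0$, Young's inequality gives $\int V_N(x-y)|\phi(x)|^2|\phi(y)|^2\,dx\,dy \geq -\|V\|_{L^1}\|\phi\|_{L^4}^4$, and combining with the sharp GN inequality and $\|V\|_{L^1}<2\alpha/C_{gn}^4$ yields $-\|V\|_{L^1}\|\phi\|_{L^4}^4 \geq -2\alpha\|\nabla\phi\|_{L^2}^2 \geq -2\alpha\|S\phi\|_{L^2}^2$. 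Hence $\langle\phi^{\otimes 2},H_{+12}\phi^{\otimes 2}\rangle \geq 2(1-\alpha)\|S\phi\|_{L^2}^2$ on $\supp\mu$. Integrating against $\mu$ and reassembling gives the $k=1$ bound, with the de Finetti and high-energy errors absorbed into the ``$+1$'' provided $\Lambda\to\infty$ while $d_\Lambda/N\to 0$; the range $\beta<1/6$ is exactly what permits such a choice of $\Lambda$. The factor $1/\sqrt 2$ in the stated $c_0$ then reflects a standard quadratic-form loss incurred when combining the high- and low-energy pieces via Cauchy--Schwarz.

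For $k\geq 2$ I would set $A=N^{-1}H_N+1$ and apply the base case to $A^{(k-1)/2}\psi_N$:
\[
\langle\psi_N,A^k\psi_N\rangle \geq c_0 \|S_k A^{(k-1)/2}\psi_N\|_{L^2}^2.
\]
Commuting $S_k$ past $A^{(k-1)/2}$ produces commutators of the form $[S_k,V_N(x_k-x_j)]$, each $O(N^\beta)$ from a derivative of $V_N$. Under $\beta<1/6$ the accumulated commutator errors are lower order and absorbed by the positive kinetic part, so $\|S_k A^{(k-1)/2}\psi_N\|^2 \geq (1-o(1))\langle S_k\psi_N,A^{k-1}S_k\psi_N\rangle$. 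Applying the inductive hypothesis to (the symmetrization of) $S_k\psi_N$ together with $[S^{(k-1)},S_k]=0$ closes the induction with constant $c_0^k$.

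The main obstacle is the $k=1$ estimate itself: one must simultaneously keep $\Lambda$ large enough for kinetic domination on the high-energy piece and small enough that $d_\Lambda/N\to 0$ for the de Finetti error. These competing constraints force precisely the range $\beta<1/6$ through the dimension count of 2D Hermite modes below $\Lambda$. Once $k=1$ is in hand, the higher-$k$ step is essentially commutator bookkeeping and is comparatively routine.
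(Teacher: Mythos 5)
Your $k=1$ argument is essentially the paper's: reduce by bosonic symmetry to the two-body operator $H_{12}$, truncate in frequency so that the finite-dimensional quantum de Finetti theorem applies to $P_{\leqslant M}^{(2)}\gamma_N^{(2)}P_{\leqslant M}^{(2)}$, verify positivity of the Hartree functional on product states via Young and the sharp Gagliardo--Nirenberg inequality, and observe that the de Finetti error $\Vert H_{12,\alpha}^{\varepsilon}\Vert_{op}D_M/N\sim N^{6\beta}/N$ forces $\beta<1/6$. Two corrections there: the truncation must be by \emph{one-body} spectral projectors $P^j_{\leqslant M}=\chi_{(0,M]}(S_j)$ on particles $1$ and $2$ (not by the total energy $\sum_j S_j^2$), since de Finetti requires the projected two-body marginal to live in $\left(P_{\leqslant M}L^2\right)^{\otimes 2}$; and the high-frequency pieces are killed by the trivial bound $\Vert V_N\Vert_{\infty}\lesssim N^{2\beta}$ against the spectral gap $S_j^2\geqslant M^2$ with $M\gtrsim N^{\beta}/\varepsilon$, not by a ``Sobolev/GN bound'' --- controlling a trace by $H^1$ is exactly what fails in 2D. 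Also, the $1/\sqrt{2}$ in $c_0$ does not come from combining high and low pieces at $k=1$ (that step yields the constant $1-\alpha$); it is needed in the induction so that the main term carries a coefficient $\geqslant 2$ and can absorb errors.

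The genuine gap is in your induction for $k\geqslant 2$. First, writing $\langle\psi_N,A^k\psi_N\rangle=\langle A^{(k-1)/2}\psi_N,A\,A^{(k-1)/2}\psi_N\rangle$ introduces fractional powers of $A=N^{-1}H_N+1$ whenever $k$ is even, and $[S_k,A^{(k-1)/2}]$ is not ``a derivative of $V_N$'': there is no usable Leibniz expansion for a fractional power of an operator containing the full interaction, so the claimed reduction $\Vert S_kA^{(k-1)/2}\psi_N\Vert^2\geqslant(1-o(1))\langle S_k\psi_N,A^{k-1}S_k\psi_N\rangle$ is unsupported already at $k=2$. Second, even granting it, the inductive hypothesis cannot be applied to $S_k\psi_N$: the theorem is proven only for fully symmetric wave functions (both the pairwise reduction and de Finetti require bosonic symmetry), $S_k\psi_N$ is not symmetric, and a bound for its symmetrization $\frac{1}{N}\sum_jS_j\psi_N$ does not imply one for $S_k\psi_N$. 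Third, the commutator errors, once estimated by H\"older and Sobolev, are of size $N^{2\beta-\frac32+}$ or $N^{\beta-1+}$ times the \emph{top-order} quantities $\Vert S^{(k)}\psi_N\Vert^2$ and $N^{-1}\Vert S_1S^{(k-1)}\psi_N\Vert^2$; absorbing them requires the main term to produce these same quantities with coefficient strictly larger than $1$, which your scheme does not arrange. The paper avoids all three problems by inducting from $k$ to $k+2$: it applies the hypothesis to the symmetric function $A\psi_N$, expands both resulting copies of $A$ algebraically into commuting two-body pieces $2+H_{ij}$ with $i,j>k$, and extracts $2\Vert S^{(k+2)}\psi_N\Vert^2+2N^{-1}\Vert S_1S^{(k+1)}\psi_N\Vert^2$ from the main term before absorbing the cross errors. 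That step is not routine bookkeeping; it is where the constant $c_0=\min(\frac{1-\alpha}{\sqrt 2},\frac12)$ and a substantial part of the proof live.
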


The difficulty of proving Theorem \ref{thm:main energy} is self-evident. In
the 2D setting in which the kinetic energy, effectively the $H^{1}$ norm,
cannot control $V_{N}$, effectively a Dirac $\delta $-mass\footnote{%
Different from the limit NLS in which the $L^{4}$ norm is easily controlled
in $H^{1}$, in the $N$-body setting, one has to control a trace with $H^{1}$.%
}, not only Theorem \ref{thm:main energy} provides stability of matter, it
also proves regularity. The key to the proof, as we will explain later, is
to make use of a large $N$ averaging effect which is revealed via a clever
application of a finite dimensional quantum de Finette theorem in \cite%
{LewinFocusing}.

\subsection{Organization of the paper}

As mentioned before, the main portion of this paper is devoted to proving
Theorem \ref{thm:main energy}. We do so in \S \ref{Sec:Energy}. We will
first prove the $k=1$ case:%
\begin{equation}
\left\langle \psi _{N},\left( N^{-1}H_{N}+1\right) \psi _{N}\right\rangle
\geqslant \left( 1-\alpha \right) \left\Vert S_{1}\psi _{N}\right\Vert
_{L^{2}}^{2}  \label{estimate:k=1}
\end{equation}%
which is Theorem \ref{thm:k=1 energy} in \S \ref{Sec:k=1 energy}.

We remark that not only the proof of Theorem \ref{thm:k=1 energy} departs
totally from its analogues in the previous work, its underlying machinery is
also significantly different. Theorem \ref{thm:k=1 energy} works because of
a large $N$ averaging effect not observed before. To explain this fact,
consider the general Hamiltonian (\ref{E:intro-Ham}) and let $H_{+ij}$ be
defined as in (\ref{def:2-body}), then by symmetry, 
\begin{equation*}
\left\langle \psi _{N},\left( N^{-1}H_{N}+1\right) \psi _{N}\right\rangle _{%
\mathbf{x}_{N}}=\left\langle \psi _{N},\left( 2+H_{+12}\right) \psi
_{N}\right\rangle _{\mathbf{x}_{N}},
\end{equation*}%
that is, (\ref{estimate:k=1}) is equivalent to%
\begin{equation}
\left\langle \psi _{N},\left( 2+H_{+12}\right) \psi _{N}\right\rangle
\geqslant C\left\Vert \left( -\Delta _{r_{1}}+W(r_{2})\right) ^{\frac{1}{2}%
}\psi _{N}\right\Vert _{L^{2}}^{2}.  \label{estimate:k=1-1}
\end{equation}%
In all the defocusing work \cite%
{AGT,TChenAndNP,ChenAnisotropic,Chen3DDerivation,C-H3Dto2D,E-E-S-Y1,E-S-Y1,E-S-Y2,E-S-Y3,E-S-Y5,Kirpatrick,Sohinger3}%
, estimates like (\ref{estimate:k=1-1}) are automatically true because $%
V\geqslant 0$. In the previous focusing work \cite{C-HFocusing,C-HFocusingII}%
, it takes substantial work to prove the similar estimates but they actually
do not rely on the fact that $\psi _{N}$ is a $N$-body bosonic wave function
in the sense that they hold even if one replaces $\psi _{N}$ by some $%
f(x_{1},x_{2})$ in (\ref{estimate:k=1-1}). However, Theorem \ref{thm:k=1
energy} requires that $\psi _{N}$ is a $N$-body bosonic wave function. In
fact, when $V<0$, in 2D, the quantity $\left\langle f,\left(
2+H_{+12}\right) f\right\rangle $ is not even bounded below, because of the $%
\delta $-function emerging from $V_{N}$. Hence, we are observing a large $N$
averaging effect, or more precisely, "though $V_{N}$ gets more singular as $%
N\rightarrow \infty $, larger $N$ beats it.", as we will see in the proof.%
\footnote{%
See Remark \ref{rem:key idea}.} Moreover, this is the only energy estimate
in the "$n$D to $n$D" \footnote{%
Here, "$n$D to $n$D" means "deriving $n$D NLS from $n$D $N$-body dynamics".}
literature which requires the trapping $\omega \neq 0$ at the moment.

Based on the $k=1$ case, we then prove the $k>1$ case in \S \ref%
{Sec:EnergyEstimate:k=k+2} with a delicate computation using the 2-body
operator. In \S \ref{sec:high beta remark}, by giving a counter example, we
show that with the current technique, one can not reach a higher $\beta $.

With Theorem \ref{thm:main energy} established, we prove Theorems \ref%
{Thm:2D Derivation} and \ref{THM:Main Theorem} in \S \ref{Sec:Derivation}.
Though the technique in \S \ref{Sec:Derivation} is standard by now, this is
the first time the derivation of the trapping case is written down without
using the lens transform in \cite%
{ChenAnisotropic,Chen3DDerivation,C-HFocusing} and it simplifies the
argument.

\subsection{Acknowledgements}

The authors would like to thank Lewin, Nam, \& Rougerie for pointing out two
minor mistakes in an earlier version of the paper and their nice comments.
X.C. was supported in part by NSF grant DMS-1464869 and J.H. was supported
in part by NSF grant DMS-1500106.

\section{Stability of Matter / Energy Estimates for Focusing Quantum
Many-body System\label{Sec:Energy}}

In this section, we prove stability of matter / energy estimate (\ref%
{eq:main energy estimate}).\footnote{%
For the defocusing case $(V\geqslant 0)$ in which there is no need to worry
about particles focusing to a point, it certainly makes sense to only call
estimates like (\ref{eq:main energy estimate}) "energy estimates". However,
that is obviously not the case when $V<0$. Moreover, (\ref{eq:main energy
estimate}) does have a similar form with the stability of matter estimates
like (\ref{eq:stab}). Hence we use the word "stability of matter / energy
estimates" here.}

\subsection{Stability of Matter when $k=1$\label{Sec:k=1 energy}}

\begin{theorem}[Stability of Matter]
\label{thm:k=1 energy}Assume $\omega >0,$ $\beta <\frac{1}{6},$ and $%
\left\Vert V\right\Vert _{L^{1}}<\frac{2\alpha }{C_{gn}^{4}}$ for some $%
\alpha \in \left( 0,1\right) $, then $\forall C_{0}>0$, there exists an $%
N_{0}>0$ such that 
\begin{equation*}
\left\langle \psi _{N},\left( N^{-1}H_{N}+C_{0}\right) \psi
_{N}\right\rangle \geqslant \left( 1-\alpha \right) \left\Vert S_{1}\psi
_{N}\right\Vert _{L^{2}}^{2}\text{, }
\end{equation*}%
for all $N>N_{0}$ and for all $\psi _{N}\in L_{s}^{2}(\mathbb{R}^{2N})$.
Here, $N_{0}$ grows to infinity as $C_{0}$ approaches $0$. In particular,
the $N$-body system is stable provided $N$ is larger than a threshold.
\end{theorem}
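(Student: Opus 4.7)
The plan is to convert the many-body estimate into a single-particle estimate via bosonic symmetry plus a finite-dimensional quantum de Finetti representation, and then close the argument with the sharp 2D Gagliardo--Nirenberg inequality. The hypothesis $\|V\|_{L^{1}}C_{gn}^{4}<2\alpha$ is exactly what makes the 2D focusing interaction controllable by kinetic energy \emph{at the single-particle level}, and de Finetti is the bridge that allows the bosonic many-body bound to reduce to such a single-particle estimate. Note that a direct pointwise estimate on an arbitrary $f(x_{1},x_{2})$ fails, since in 2D $V_N$ behaves like a $\delta$-mass and the quantity $\langle f,(2+H_{+12})f\rangle$ is not even bounded below; bosonic symmetry and averaging must be used crucially.

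Concretely, expand $H_{N}$ and use that $\psi_{N}$ is bosonic to obtain
\begin{equation*}
\langle\psi_{N},(N^{-1}H_{N}+C_{0})\psi_{N}\rangle = \|S_{1}\psi_{N}\|_{L^{2}}^{2} + \tfrac{N-1}{2N}\,\mathrm{Tr}\bigl(V_{N}(x_{1}-x_{2})\gamma_{N}^{(2)}\bigr) + C_{0},
\end{equation*}
so that, since $V\leq 0$, the target inequality is equivalent to showing $\tfrac{N-1}{2N}\,\mathrm{Tr}(|V_{N}|\gamma_{N}^{(2)}) \leq \alpha\|S_{1}\psi_{N}\|^{2}+C_{0}$ for $N$ large. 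Let $P_{K}$ project onto the span of the Hermite eigenfunctions of $S^{2}=-\Delta+\omega^{2}|x|^{2}$ with eigenvalue $\leq K$, and $Q_{K}=I-P_{K}$. The piece $P_{K}^{\otimes 2}\gamma_{N}^{(2)}P_{K}^{\otimes 2}$ lives on a finite-dimensional two-particle space (dimension $\sim K^{2}$ in 2D), so the quantitative finite-dimensional quantum de Finetti theorem of Lewin--Nam--Rougerie from \cite{LewinFocusing} produces a probability measure $\mu_{N}$ on the unit sphere of $\mathrm{Ran}\,P_{K}$ with
\begin{equation*}
P_{K}^{\otimes 2}\gamma_{N}^{(2)}P_{K}^{\otimes 2} = \int (|\phi\rangle\langle\phi|)^{\otimes 2}\,d\mu_{N}(\phi) + O(\dim P_{K}/N)
\end{equation*}
in trace norm, while the cross terms involving $Q_{K}$ in the splitting $I=P_{K}+Q_{K}$ are absorbed using $Q_{K}\leq K^{-1}S^{2}$ and Young's inequality $\|V_{N}\|_{L^{1}}=\|V\|_{L^{1}}$. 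For each $\phi$ in the support of $\mu_{N}$, Young and the sharp 2D Gagliardo--Nirenberg inequality give
\begin{equation*}
\int\!\!\int |V_{N}(x-y)||\phi(x)|^{2}|\phi(y)|^{2}\,dx\,dy \leq \|V\|_{L^{1}}\|\phi\|_{L^{4}}^{4} \leq \|V\|_{L^{1}}C_{gn}^{4}\|\phi\|_{L^{2}}^{2}\|\nabla\phi\|_{L^{2}}^{2}.
\end{equation*}
Since $\|\phi\|_{L^{2}}=1$ on the support of $\mu_{N}$ and $\|\nabla\phi\|^{2}\leq \|S\phi\|^{2}$, integrating against $d\mu_{N}$ and recognizing $\int\|S\phi\|^{2}\,d\mu_{N}=\mathrm{Tr}(S^{2}\gamma_{N}^{(1)})+o(1)=\|S_{1}\psi_{N}\|^{2}+o(1)$, the main term is bounded by $\tfrac{1}{2}\|V\|_{L^{1}}C_{gn}^{4}\|S_{1}\psi_{N}\|^{2}<\alpha\|S_{1}\psi_{N}\|^{2}$, as required. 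This is the large-$N$ averaging effect advertised in Remark~\ref{rem:key idea}: the singular $V_{N}$ is handled in an $L^{1}$ sense only after de Finetti has smeared it over the bosonic ensemble, which is why the estimate genuinely requires $\psi_{N}$ to be symmetric (and not just any function of two variables).

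The main obstacle is the joint choice of the cutoff $K=K(N)$. The de Finetti error $\dim P_{K}/N\sim K^{2}/N$ forces $K\ll N^{1/2}$, while controlling the $Q_{K}$-remainders against the singular scaling $\|V_{N}\|_{L^{\infty}}\sim N^{2\beta}$ forces $K$ to grow at least like a power of $N^{\beta}$; balancing these competing constraints is precisely what produces the restriction $\beta<\tfrac{1}{6}$, and the sharpness of this threshold for the present approach is exactly the content of the counter-example promised in \S\ref{sec:high beta remark}. The residual additive errors left over after truncation and de Finetti are absorbed into $C_{0}$, which is the reason the threshold $N_{0}$ must grow as $C_{0}\searrow 0$.
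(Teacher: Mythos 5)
Your proposal follows essentially the same route as the paper's: reduce to a two-body statement via bosonic symmetry, truncate to low Hermite modes with a cutoff tied to $N^{\beta}$, apply the quantitative finite-dimensional quantum de Finetti theorem of \cite{LewinFocusing}, and close at the Hartree level with Cauchy--Schwarz, Young's convolution inequality, and the sharp 2D Gagliardo--Nirenberg inequality. One structural difference: the paper keeps the kinetic term $\alpha(S_1^2+S_2^2)$ \emph{inside} the Hartree functional, proving $E_{\varepsilon}(\phi)\geqslant 0$ so that the entire de Finetti average can simply be dropped, whereas you pull the kinetic energy out first and must then identify $\int\Vert S\phi\Vert^{2}\,d\mu_{N}$ with $\Vert S_{1}\psi_{N}\Vert^{2}$ up to errors. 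That extra step is workable (test the de Finetti trace-norm bound against $S^{2}P_{K}$, whose operator norm is $\leqslant K$, and note the comparison only needs to go one way), but it is an additional use of de Finetti that the paper's arrangement avoids. Two imprecisions should be fixed. First, the cross terms involving $Q_{K}$ cannot be absorbed ``using Young's inequality $\Vert V_{N}\Vert_{L^{1}}=\Vert V\Vert_{L^{1}}$'' --- an $L^{1}$/trace bound of that type is exactly the false endpoint estimate of Lemma \ref{L:trace-fail}; as you correctly say later, they must be beaten by $\Vert V_{N}\Vert_{L^{\infty}}\sim N^{2\beta}$ against the spectral gap $S^{2}\geqslant K$ on the range of $Q_{K}$, which is what forces $K\gtrsim N^{2\beta}$. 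Second, the balancing you state ($K\ll N^{1/2}$ versus $K\gtrsim N^{2\beta}$) only yields $\beta<\tfrac14$; to obtain $\beta<\tfrac16$ one must weight the trace-norm de Finetti error $\sim K^{2}/N$ by the operator norm of the projected two-body Hamiltonian, $\sim N^{2\beta}$, producing the paper's total error $N^{6\beta-1}\rightarrow 0$.
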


\begin{remark}
In the previous focusing work \cite{C-HFocusing,C-HFocusingII}, there is a
positive lower bound for the $C_{0}$ while there is no such requirement in
Theorem \ref{thm:k=1 energy} as long as $C_{0}>0$.
\end{remark}

To prove Theorem \ref{thm:k=1 energy}, we adopt the notation that: for any
function $f$, write%
\begin{equation*}
f_{Nij}=N^{2\beta }f(N^{\beta }(x_{i}-x_{j})).
\end{equation*}%
The key of the proof of Theorem \ref{thm:k=1 energy} is the following
theorem.

\begin{theorem}
\label{thm:pre 1 k=1 energy}Define 
\begin{equation}
H_{ij}=S_{i}^{2}+S_{j}^{2}+\frac{N-1}{N}V_{Nij}.  \label{def:H_ij}
\end{equation}%
Assume $\omega >0,$ $\beta <\frac{1}{6},$ and $\left\Vert V\right\Vert
_{L^{1}}<\frac{2\alpha }{C_{gn}^{4}}$ for some $\alpha \in \left( 0,1\right) 
$, then $\forall C_{0}>0$, there exists an $N_{0}>0$ such that 
\begin{equation*}
\left\langle \psi _{N},\left( 2C_{0}+H_{12}\right) \psi _{N}\right\rangle
\geqslant 2\left( 1-\alpha \right) \left\Vert S_{1}\psi _{N}\right\Vert
_{L^{2}}^{2}\text{, }
\end{equation*}%
for all $N>N_{0}$ and for all $\psi _{N}\in L_{s}^{2}(\mathbb{R}^{2N})$.
Here, $N_{0}$ grows to infinity as $C_{0}$ approaches $0$.
\end{theorem}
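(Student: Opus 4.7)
The plan is to reduce the assertion to a bound on the two-body marginal $\gamma_N^{(2)}$ of $\psi_N$, apply a finite-dimensional quantum de Finetti theorem from \cite{LewinFocusing} on a spectral cutoff of the Hermite operator $S^2=-\Delta+\omega^2|x|^2$, and use the sharp 2D Gagliardo--Nirenberg inequality to absorb the focusing interaction into the kinetic energy on each factorized state produced by the de Finetti decomposition. Bosonic symmetry enters essentially here: as emphasized in the introduction, $\langle f,(2+H_{+12})f\rangle$ is not bounded below for a generic two-body function $f$ in 2D because the delta-type singularity of $V_N$ is not controlled by $H^1$, so the improvement must come entirely from the large-$N$ averaging afforded by the symmetry of $\psi_N$.

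First I would observe that, by symmetry of $\psi_N$, the desired inequality is equivalent to
\begin{equation*}
\tr_{1,2}\bigl[\bigl(\alpha(S_1^2+S_2^2)+\tfrac{N-1}{N}V_{N12}\bigr)\gamma_N^{(2)}\bigr]+2C_0\geq 0,
\end{equation*}
since $\|S_1\psi_N\|_{L^2}^2=\tr(S_j^2\gamma_N^{(2)})$ for $j=1,2$. I would then introduce the spectral projection $P_K$ of $S^2$ onto eigenvalues $\leq E_K\asymp\omega K$, so that $\dim P_K=O(K^2)$ in 2D and $S^2\geq E_K$ on the range of $Q_K:=I-P_K$, with $K=K(N)\to\infty$ tuned at the end. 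Applying the finite-dimensional quantum de Finetti theorem of \cite{LewinFocusing} to the symmetric state $\psi_N$ produces a Borel probability measure $\mu_N$ on the unit sphere of $P_K L^2(\mathbb{R}^2)$ with
\begin{equation*}
\Bigl\|P_K^{\otimes 2}\gamma_N^{(2)}P_K^{\otimes 2}-\int|\phi^{\otimes 2}\rangle\langle\phi^{\otimes 2}|\,d\mu_N(\phi)\Bigr\|_1\leq\frac{C\dim P_K}{N}\leq\frac{CK^2}{N}.
\end{equation*}
For each $\phi\in P_K L^2$ with $\|\phi\|_{L^2}=1$, the smoothness of $\phi$ (controlled by $E_K$) yields
\begin{equation*}
\int V_N(x-y)|\phi(x)|^2|\phi(y)|^2\,dx\,dy=-\|V\|_{L^1}\|\phi\|_{L^4}^4+o_N(1),
\end{equation*}
from the weak convergence $V_N\rightharpoonup(\int V)\delta$. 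The 2D Gagliardo--Nirenberg inequality $\|\phi\|_{L^4}^4\leq C_{gn}^4\|S\phi\|^2$ combined with the hypothesis $\|V\|_{L^1}C_{gn}^4<2\alpha$ then gives
\begin{equation*}
\langle\phi^{\otimes 2},\bigl(\alpha(S_1^2+S_2^2)+\tfrac{N-1}{N}V_{N12}\bigr)\phi^{\otimes 2}\rangle\geq(2\alpha-\|V\|_{L^1}C_{gn}^4)\|S\phi\|^2-o_N(1)\geq-o_N(1),
\end{equation*}
and integration against $\mu_N$ delivers the required lower bound on the trace of $T:=\alpha(S_1^2+S_2^2)+\tfrac{N-1}{N}V_{N12}$ against the $P_K^{\otimes 2}$-sandwiched piece of $\gamma_N^{(2)}$.

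The hard part will be controlling, simultaneously, three error sources: (a) the de Finetti trace-norm discrepancy $CK^2/N$ weighed against the operator norm $\|P_K^{\otimes 2}TP_K^{\otimes 2}\|_{\op}\leq C(E_K+N^{2\beta}\|V\|_\infty)$; (b) the $V_N\to(\int V)\delta$ replacement error, of order $N^{-\beta}$ times a positive power of $E_K$ coming from the Hermite-Sobolev embedding into $L^\infty$ in 2D; and (c) the cross terms in the expansion $\gamma_N^{(2)}=P_K^{\otimes 2}\gamma_N^{(2)}P_K^{\otimes 2}+(\text{rest})$ containing at least one $Q_K$-factor. The naive bound $\|V_{N12}\|_{\op}\leq N^{2\beta}\|V\|_\infty$ is far too lossy for (c); instead the bosonic symmetry must be exploited to spread the $Q_K$-weight uniformly across the $N$ particles, so that $\tr(Q_{K,1}\gamma_N^{(1)})$ is small under a kinetic-energy-per-particle bound, followed by a Cauchy--Schwarz-type step that trades an $E_K^{-1}$ factor against the high-mode kinetic energy from $Q_K$. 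The restriction $\beta<\tfrac{1}{6}$ arises precisely as the balancing condition that permits a choice $K=N^{\gamma}$ for which all of (a), (b), (c) vanish in the limit; any fixed-sign residual is absorbed into the constant $2C_0$, and it is this flexibility that lets the theorem hold for every $C_0>0$ with $N_0=N_0(C_0)$ possibly diverging as $C_0\downarrow 0$.
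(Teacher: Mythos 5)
Your overall architecture is the same as the paper's: reduce to showing $2C_0+H_{12,\alpha}\geqslant 0$ with $H_{12,\alpha}=\alpha S_1^2+\alpha S_2^2+\frac{N-1}{N}V_{N12}$ as a form on $\gamma_N^{(2)}$, localize to a finite-dimensional spectral subspace of the Hermite operator, apply the finite-dimensional quantum de Finetti theorem of \cite{LewinFocusing}, verify positivity of the Hartree functional on factorized states via Gagliardo--Nirenberg, and balance $\dim P_K$, the operator norm of the localized Hamiltonian, and the de Finetti error to extract $\beta<\frac16$. That counting is correct and matches the paper. However, two of the steps you flag as "the hard part" are resolved incorrectly or left genuinely open.

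First, your treatment of the Hartree energy via the weak convergence $V_N\rightharpoonup(\int V)\delta$ with an $o_N(1)$ error is both unnecessary and uncontrolled: the error in that replacement depends on derivatives of $\phi$, and $\phi$ ranges over the unit sphere of $P_KL^2$ with $K=K(N)\to\infty$, so the $o_N(1)$ is not uniform and would impose an additional (unexamined) constraint linking $\beta$ and $K$. The paper avoids this entirely: since $V\leqslant 0$, one only needs the one-sided bound $\int V_{N12}|\phi(x_1)|^2|\phi(x_2)|^2\geqslant -\Vert V_N\Vert_{L^1}\Vert|\phi|^2\Vert_{L^2}^2=-\Vert V\Vert_{L^1}\Vert\phi\Vert_{L^4}^4$, which follows from Cauchy--Schwarz and Young's convolution inequality for \emph{every} normalized $\phi$, with no error term and no regularity. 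Second, your proposed mechanism for the cross terms containing a $Q_K$ factor --- spreading the $Q_K$-weight over the particles so that $\tr(Q_{K,1}\gamma_N^{(1)})$ is small "under a kinetic-energy-per-particle bound" --- is not available: the theorem must hold for \emph{all} $\psi_N\in L_s^2(\mathbb{R}^{2N})$, with no a priori energy bound (indeed the conclusion is precisely what produces such a bound later). The paper's Lemma \ref{Lem:Lewin1} handles these terms as a pure operator inequality: Cauchy--Schwarz with weight $\varepsilon$ on each cross term, then the crude bound $\Vert V_{N12}\Vert_{\op}\leqslant N^{2\beta}\Vert V\Vert_\infty$ is paid \emph{only} on pieces carrying a high projector, where it is absorbed by $\alpha S^2\geqslant\alpha M^2$ once $M\gtrsim N^{\beta}/\varepsilon$. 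The price is a residual $-2\varepsilon^2P_{\leqslant M}^{(2)}|V_{N12}|P_{\leqslant M}^{(2)}$ on the low-frequency block, which must then be carried into the Hartree functional; this is why the paper proves positivity of $H_{12,\alpha}^{\varepsilon}=H_{12,\alpha}-2\varepsilon^2|V_{N12}|$ and needs $(1+2\varepsilon^2)C_{gn}^4\Vert V\Vert_{L^1}<2\alpha$ for small $\varepsilon$, a coupling between steps that your proposal misses. Filling these two gaps essentially reproduces the paper's Lemmas \ref{Lem:Lewin1} and \ref{Lem:HartreeEnergy}.
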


\begin{proof}[Proof of Theorem \protect\ref{thm:k=1 energy} assuming Theorem 
\protect\ref{thm:pre 1 k=1 energy}]
We decompose the Hamiltonian $H_{N}$ into 
\begin{equation}
N^{-1}H_{N}+C_{0}=\frac{1}{2N(N-1)}\sum_{\substack{ i,j=1,\ldots ,N  \\ %
i\neq j}}\left( 2C_{0}+H_{ij}\right) .  \label{E:Hamiltonian Decomposition}
\end{equation}%
Hence%
\begin{eqnarray*}
\left\langle \psi _{N},\left( N^{-1}H_{N}+C_{0}\right) \psi
_{N}\right\rangle &=&\frac{1}{2N(N-1)}\sum_{\substack{ i,j=1,\ldots ,N  \\ %
i\neq j}}\left\langle \psi _{N},\left( 2C_{0}+H_{ij}\right) \psi
_{N}\right\rangle \\
&=&\frac{1}{2N(N-1)}\sum_{\substack{ i,j=1,\ldots ,N  \\ i\neq j}}%
\left\langle \psi _{N},\left( 2C_{0}+H_{12}\right) \psi _{N}\right\rangle \\
&\geqslant &\left( 1-\alpha \right) \left\Vert S_{1}\psi _{N}\right\Vert
_{L^{2}}^{2}\text{.}
\end{eqnarray*}
\end{proof}

We then turn our attention onto the proof of Theorem \ref{thm:pre 1 k=1
energy}. We will prove the following proposition.

\begin{proposition}
\label{prop:pre 2 k=1 energy}Assume $\omega >0,$ $\beta <\frac{1}{6},$ and $%
\left\Vert V\right\Vert _{L^{1}}<\frac{2\alpha }{C_{gn}^{4}}$ for some $%
\alpha \in \left( 0,1\right) $, define the operator%
\begin{equation*}
H_{ij,\alpha }=\alpha S_{i}^{2}+\alpha S_{j}^{2}+\frac{N-1}{N}V_{Nij}.
\end{equation*}%
Then $\forall C_{0}>0$, there exists an $N_{0}>0$ such that 
\begin{equation*}
2C_{0}+H_{ij,\alpha }\geqslant 0,\text{ }\forall N>N_{0}\text{.}
\end{equation*}%
Here, $N_{0}$ grows to infinity as $C_{0}$ approaches $0$.
\end{proposition}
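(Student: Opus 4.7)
The plan is to establish the operator inequality on the bosonic subspace by combining a sharp Gagliardo--Nirenberg estimate on two-body factorized states with the finite-dimensional quantum de Finetti theorem of Lewin--Nam--Rougerie from \cite{LewinFocusing}, together with a spectral truncation of $\psi_N$ in the Hermite eigenbasis of $S^2$ to make the de Finetti error quantitative. Bosonic symmetry is essential here: for a generic two-body $\phi(x_1,x_2)\in L^2(\mathbb{R}^4)$ the operator $\alpha(S_1^2+S_2^2)+V_{N12}$ is unbounded below as $N\to\infty$ because $V_{N12}$ produces an attractive bound state of depth $\sim N^{2\beta}$ in 2D, so a positive lower bound can only emerge after averaging over the $N$ particles.

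First I would dispatch the factorized case. For $\Psi=\phi^{\otimes N}$ with $\|\phi\|_{L^2}=1$, Young's inequality and the sharp 2D estimate \eqref{eq:G-N estimate} together with $\|S\phi\|_{L^2}^2\geq\|\nabla\phi\|_{L^2}^2$ yield
\begin{equation*}
\langle \Psi, H_{12,\alpha}\Psi\rangle
\;\geq\; 2\alpha\|S\phi\|_{L^2}^2 - \|V\|_{L^1}\|\phi\|_{L^4}^4
\;\geq\; \bigl(2\alpha-C_{gn}^{4}\|V\|_{L^1}\bigr)\|\nabla\phi\|_{L^2}^2
\;\geq\; 0,
\end{equation*}
using the hypothesis $\|V\|_{L^1}<2\alpha/C_{gn}^{4}$. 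This isolates exactly where the sharp Gagliardo--Nirenberg threshold enters and gives the proposition on factorized states without needing the $2C_0$ slack.

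To lift this to general symmetric $\psi_N$, I would introduce the low-energy spectral projection $P_K:=\mathbf{1}_{S^2\leq K}$, whose range has finite dimension $D_K\lesssim K^{2}/\omega^{2}$, and decompose $\psi_N=P_K^{\otimes N}\psi_N+\psi_N^{>K}$. On $\psi_N^{>K}$ at least one particle carries energy above $K$, and the positive kinetic part $\alpha(S_1^2+S_2^2)$ dominates the attractive interaction once $K=K(N)\to\infty$. On $P_K^{\otimes N}\psi_N$ the finite-dimensional quantum de Finetti theorem from \cite{LewinFocusing} produces a probability measure $\mu_N$ on unit vectors in $P_K L^2(\mathbb{R}^2)$ with
\begin{equation*}
\Bigl\|P_K^{\otimes 2}\gamma_{N}^{(2)}P_K^{\otimes 2}
\;-\;\int |\phi\rangle\langle\phi|^{\otimes 2}\,d\mu_N(\phi)\Bigr\|_{\mathrm{tr}}
\;\lesssim\;\frac{D_K}{N}.
\end{equation*}
Integrating the factorized bound of the previous step against $\mu_N$ gives a nonnegative main term, and the de Finetti defect is absorbed into $2C_0$ after being multiplied by the operator-norm bound $\|H_{12,\alpha}P_K^{\otimes 2}\|_{\op}\lesssim K+N^{2\beta}\|V\|_{L^{\infty}}$ on the truncated subspace.

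The main obstacle is the quantitative parameter balance. Combining the two sources of error gives a total of order $(K+N^{2\beta})D_K/N\sim K^{3}/N+K^{2}N^{2\beta-1}$, on top of the high-energy truncation loss which must simultaneously be driven to zero. Choosing $K=N^{\gamma}$ for a suitable $\gamma\in(0,1)$ and checking that all error terms become $o(1)$ forces the condition $\beta<\tfrac{1}{6}$, precisely the hypothesis of the proposition. The same balance explains why $N_0$ must grow as $C_0\downarrow 0$: a smaller slack requires driving all remainders below a smaller tolerance, which only kicks in at larger $N$.
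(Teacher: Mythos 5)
Your proposal follows essentially the same route as the paper's proof: a spectral truncation at Hermite energy $\sim N^{2\beta}$ (Lemma \ref{Lem:Lewin1}), the finite-dimensional quantum de Finetti theorem applied to $P_{\leqslant M}^{(2)}\gamma_N^{(2)}P_{\leqslant M}^{(2)}$ (Lemma \ref{Lem:QdF}), the sharp Gagliardo--Nirenberg bound on factorized states (Lemma \ref{Lem:HartreeEnergy}), and the same $\|H\|_{\op}\cdot D/N\sim N^{6\beta-1}$ error count forcing $\beta<\tfrac16$. The one detail you elide is that the Cauchy--Schwarz treatment of the low--high cross terms leaves a residual $-2\varepsilon^2P_{\leqslant M}^{(2)}\left\vert V_{N12}\right\vert P_{\leqslant M}^{(2)}$ on the low-energy block, so the factorized-state positivity must be proved for the $\varepsilon$-perturbed Hartree energy; the strict inequality $\left\Vert V\right\Vert_{L^1}<2\alpha/C_{gn}^4$ supplies exactly the slack needed.
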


\begin{proof}
See \S \ref{sec:proof of key k=1 prop}.
\end{proof}

In fact, assuming Proposition \ref{prop:pre 2 k=1 energy}, then%
\begin{eqnarray*}
\left\langle \psi _{N},\left( 2C_{0}+H_{12}\right) \psi _{N}\right\rangle
&=&\left( 1-\alpha \right) \left\langle \psi _{N},\left(
S_{1}^{2}+S_{2}^{2}\right) \psi _{N}\right\rangle +\left\langle \psi
_{N},\left( 2C_{0}+H_{12,\alpha }\right) \psi _{N}\right\rangle \\
&\geqslant &2\left( 1-\alpha \right) \left\Vert S_{1}\psi _{N}\right\Vert
_{L^{2}}^{2}.
\end{eqnarray*}%
Hence we are left with the proof of Proposition \ref{prop:pre 2 k=1 energy}.

\subsubsection{Proof of Proposition \protect\ref{prop:pre 2 k=1 energy}\label%
{sec:proof of key k=1 prop}}

Define the Littlewood-Paley projectors (eigenspace projectors) by 
\begin{eqnarray*}
P_{\leqslant M}^{j} &=&\chi _{\left( 0,M\right] }\left( S_{j}\right) , \\
P_{>M}^{j} &=&\chi _{(M,\infty )}\left( S_{j}\right) , \\
P_{\leqslant M}^{(k)} &=&\dprod\limits_{j=1}^{k}P_{\leqslant M}^{j}\text{, }%
P_{>M}^{(k)}=\dprod\limits_{j=1}^{k}P_{>M}^{j}\text{ }
\end{eqnarray*}%
We will need the following lemmas.

\begin{lemma}
\footnote{%
This lemma is essentially \cite[Lemma 3.6]{LewinFocusing}.}\label{Lem:Lewin1}%
Let $H_{ij,\alpha }$ be defined as in Proposition \ref{prop:pre 2 k=1 energy}%
, then, for all $\varepsilon \in \left( 0,1\right) $, as long as $M\geqslant
4\sqrt{\frac{\left\Vert V\right\Vert _{\infty }}{\alpha }}\frac{N^{\beta }}{%
\varepsilon }$, we have%
\begin{equation*}
H_{12,\alpha }\geqslant P_{\leqslant M}^{(2)}H_{12,\alpha }P_{\leqslant
M}^{(2)}-2\varepsilon ^{2}P_{\leqslant M}^{(2)}\left\vert V_{N12}\right\vert
P_{\leqslant M}^{(2)}.
\end{equation*}
\end{lemma}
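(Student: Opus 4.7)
The plan is to carry out an IMS-type spectral localization with respect to the commuting operators $S_1$ and $S_2$. Write $P=P_{\leqslant M}^{(2)}=P_{\leqslant M}^{1}P_{\leqslant M}^{2}$ and $Q=I-P$, so that $Q=P_{>M}^{1}P_{\leqslant M}^{2}+P_{\leqslant M}^{1}P_{>M}^{2}+P_{>M}^{1}P_{>M}^{2}$; on the range of $Q$, at least one of $S_{1},S_{2}$ exceeds $M$. Decompose
\[
H_{12,\alpha}=\alpha(S_{1}^{2}+S_{2}^{2})+W,\qquad W:=\tfrac{N-1}{N}V_{N12}\leqslant 0,
\]
and expand each term according to the resolution of identity $I=P+Q$.

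The first step is to observe that $\alpha(S_{1}^{2}+S_{2}^{2})$ commutes with both $P^{1}_{\leqslant M}$ and $P^{2}_{\leqslant M}$, hence with $P$ and $Q$, so it splits diagonally:
\[
\alpha(S_{1}^{2}+S_{2}^{2})=P\,\alpha(S_{1}^{2}+S_{2}^{2})\,P+Q\,\alpha(S_{1}^{2}+S_{2}^{2})\,Q\geqslant P\,\alpha(S_{1}^{2}+S_{2}^{2})\,P+\alpha M^{2}Q,
\]
since on $\mathrm{Ran}(Q)$ one has $S_{1}^{2}+S_{2}^{2}\geqslant M^{2}$. The second step is to control the off-diagonal contribution of the potential by operator Cauchy--Schwarz. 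Writing $|W|=-W$ and $|W|=|W|^{1/2}\cdot|W|^{1/2}$, the inequality $AB^{*}+BA^{*}\leqslant\eta\,AA^{*}+\eta^{-1}BB^{*}$ applied to $A=P|W|^{1/2}$, $B=Q|W|^{1/2}$ yields, for any $\eta>0$,
\[
PWQ+QWP\geqslant -\eta\, P|W|P-\eta^{-1}\,Q|W|Q.
\]
Combining with $QWQ=-Q|W|Q$ and the bound from the kinetic splitting gives
\[
H_{12,\alpha}-P\,H_{12,\alpha}\,P\geqslant \alpha M^{2}Q-\bigl(1+\eta^{-1}\bigr)Q|W|Q-\eta\, P|W|P.
\]

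The third step is to absorb the $Q|W|Q$ term using the trivial pointwise bound $\|V_{N12}\|_{\infty}=N^{2\beta}\|V\|_{\infty}$, which gives $Q|W|Q\leqslant N^{2\beta}\|V\|_{\infty}Q$. Choosing $\eta=2\varepsilon^{2}$ produces the desired $-2\varepsilon^{2}P|V_{N12}|P$ on the low-frequency side (note $W=-|W|$ and $\tfrac{N-1}{N}\leqslant 1$, so $P|W|P\leqslant P|V_{N12}|P$), while the remaining $Q$-terms combine into
\[
\Bigl(\alpha M^{2}-(1+\tfrac{1}{2\varepsilon^{2}})N^{2\beta}\|V\|_{\infty}\Bigr)Q,
\]
which is nonnegative precisely under the hypothesis $M\geqslant 4\sqrt{\|V\|_{\infty}/\alpha}\,N^{\beta}/\varepsilon$ (for $\varepsilon\in(0,1)$ one has $1+\tfrac{1}{2\varepsilon^{2}}\leqslant 16/\varepsilon^{2}$ with ample room). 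The one subtle point is the choice of the Cauchy--Schwarz parameter: it must be calibrated so that the small coefficient appears on the $P|W|P$ side (which we keep) and the large reciprocal falls on the $Q|W|Q$ side (which is killed by the kinetic gap $\alpha M^{2}$); this trade-off is exactly what forces the $N^{\beta}/\varepsilon$ dependence in the threshold for $M$.
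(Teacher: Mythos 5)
Your proof is correct and follows essentially the same strategy as the paper's: localize in the Hermite spectrum, use Cauchy--Schwarz to place a small $\varepsilon^2$ weight on the low-frequency block and a large reciprocal weight on the high-frequency block, and absorb the latter (bounded by $N^{2\beta}\|V\|_\infty$) into the kinetic gap $\alpha M^2$, which is exactly what fixes the threshold $M\gtrsim N^\beta/\varepsilon$. The only difference is organizational: the paper expands against all four projectors $P_{\leqslant M}^{1}P_{\leqslant M}^{2},P_{\leqslant M}^{1}P_{>M}^{2},P_{>M}^{1}P_{\leqslant M}^{2},P_{>M}^{(2)}$ and estimates the sixteen resulting terms one by one, whereas your two-block $P/Q$ split with a single operator Cauchy--Schwarz packages the identical estimate more cleanly.
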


\begin{proof}
We write%
\begin{eqnarray*}
S_{j}^{2} &=&\left( P_{\leqslant M}^{1}+P_{>M}^{1}\right) \left(
P_{\leqslant M}^{2}+P_{>M}^{2}\right) S_{j}^{2}\left( P_{\leqslant
M}^{1}+P_{>M}^{1}\right) \left( P_{\leqslant M}^{2}+P_{>M}^{2}\right) \\
&=&P_{\leqslant M}^{(2)}S_{j}^{2}P_{\leqslant M}^{(2)}+P_{\leqslant
M}^{1}P_{>M}^{2}S_{j}^{2}P_{\leqslant
M}^{1}P_{>M}^{2}+P_{>M}^{1}P_{\leqslant
M}^{2}S_{j}^{2}P_{>M}^{1}P_{\leqslant
M}^{2}+P_{>M}^{(2)}S_{j}^{2}P_{>M}^{(2)}
\end{eqnarray*}%
because $P_{>M}^{i}S_{j}^{2}P_{\leqslant M}^{i}=0$, regardless $i=j$ or not.
We then expand%
\begin{eqnarray*}
&&V_{N12} \\
&=&P_{\leqslant M}^{(2)}V_{N12}P_{\leqslant M}^{(2)}+P_{\leqslant
M}^{(2)}V_{N12}P_{\leqslant M}^{1}P_{>M}^{2}+P_{\leqslant
M}^{(2)}V_{N12}P_{>M}^{1}P_{\leqslant M}^{2}+P_{\leqslant
M}^{(2)}V_{N12}P_{>M}^{(2)} \\
&&P_{\leqslant M}^{1}P_{>M}^{2}V_{N12}P_{\leqslant M}^{(2)}+P_{\leqslant
M}^{1}P_{>M}^{2}V_{N12}P_{\leqslant M}^{1}P_{>M}^{2}+P_{\leqslant
M}^{1}P_{>M}^{2}V_{N12}P_{>M}^{1}P_{\leqslant M}^{2}+P_{\leqslant
M}^{1}P_{>M}^{2}V_{N12}P_{>M}^{(2)} \\
&&+P_{>M}^{1}P_{\leqslant M}^{2}V_{N12}P_{\leqslant
M}^{(2)}+P_{>M}^{1}P_{\leqslant M}^{2}V_{N12}P_{\leqslant
M}^{1}P_{>M}^{2}+P_{>M}^{1}P_{\leqslant M}^{2}V_{N12}P_{>M}^{1}P_{\leqslant
M}^{2}+P_{>M}^{1}P_{\leqslant M}^{2}V_{N12}P_{>M}^{(2)} \\
&&+P_{>M}^{(2)}V_{N12}P_{\leqslant M}^{(2)}+P_{>M}^{(2)}V_{N12}P_{\leqslant
M}^{1}P_{>M}^{2}+P_{>M}^{(2)}V_{N12}P_{>M}^{1}P_{\leqslant
M}^{2}+P_{>M}^{(2)}V_{N12}P_{>M}^{(2)}.
\end{eqnarray*}%
Use symmetric and rearrange%
\begin{eqnarray*}
&&\left\langle \psi _{N},V_{N12}\psi _{N}\right\rangle \\
&=&\left\langle \psi _{N},P_{\leqslant M}^{(2)}V_{N12}P_{\leqslant
M}^{(2)}\psi _{N}\right\rangle +4\func{Re}\left\langle \psi
_{N},P_{\leqslant M}^{(2)}V_{N12}P_{\leqslant M}^{1}P_{>M}^{2}\psi
_{N}\right\rangle \\
&&+2\func{Re}\left\langle \psi _{N},P_{\leqslant
M}^{(2)}V_{N12}P_{>M}^{(2)}\psi _{N}\right\rangle +2\left\langle \psi
_{N},P_{\leqslant M}^{1}P_{>M}^{2}V_{N12}P_{\leqslant M}^{1}P_{>M}^{2}\psi
_{N}\right\rangle \\
&&+2\left\langle \psi _{N},P_{\leqslant
M}^{1}P_{>M}^{2}V_{N12}P_{>M}^{1}P_{\leqslant M}^{2}\psi _{N}\right\rangle +4%
\func{Re}\left\langle \psi _{N},P_{\leqslant
M}^{1}P_{>M}^{2}V_{N12}P_{>M}^{(2)}\psi _{N}\right\rangle \\
&&+\left\langle \psi _{N},P_{>M}^{(2)}V_{N12}P_{>M}^{(2)}\psi
_{N}\right\rangle .
\end{eqnarray*}%
Estimate the crossing terms with $P_{\leqslant M}^{(2)}$ by Cauchy-Schwarz 
\begin{eqnarray*}
&&4\func{Re}\left\langle \psi _{N},P_{\leqslant M}^{(2)}V_{N12}P_{\leqslant
M}^{1}P_{>M}^{2}\psi _{N}\right\rangle \\
&\geqslant &-\frac{16}{\varepsilon ^{2}}\left\langle P_{\leqslant
M}^{1}P_{>M}^{2}\psi _{N},\left\vert V_{N12}\right\vert P_{\leqslant
M}^{1}P_{>M}^{2}\psi _{N}\right\rangle -\varepsilon ^{2}\left\langle
P_{\leqslant M}^{(2)}\psi _{N},\left\vert V_{N12}\right\vert P_{\leqslant
M}^{(2)}\psi _{N}\right\rangle \\
&\geqslant &-\frac{16N^{2\beta }\left\Vert V\right\Vert _{\infty }}{%
\varepsilon ^{2}}\left\Vert P_{\leqslant M}^{1}P_{>M}^{2}\psi
_{N}\right\Vert _{L^{2}}^{2}-\varepsilon ^{2}\left\langle P_{\leqslant
M}^{(2)}\psi _{N},\left\vert V_{N12}\right\vert P_{\leqslant M}^{(2)}\psi
_{N}\right\rangle
\end{eqnarray*}%
\begin{eqnarray*}
&&2\func{Re}\left\langle \psi _{N},P_{>M}^{(2)}V_{N12}P_{\leqslant
M}^{(2)}\psi _{N}\right\rangle \\
&\geqslant &-\frac{4N^{2\beta }\left\Vert V\right\Vert _{\infty }}{%
\varepsilon ^{2}}\left\Vert P_{>M}^{(2)}\psi _{N}\right\Vert
_{L^{2}}^{2}-\varepsilon ^{2}\left\langle P_{\leqslant M}^{(2)}\psi
_{N},\left\vert V_{N12}\right\vert P_{\leqslant M}^{(2)}\psi
_{N}\right\rangle
\end{eqnarray*}%
and the other terms by 
\begin{equation*}
2\left\langle \psi _{N},P_{\leqslant M}^{1}P_{>M}^{2}V_{N12}P_{\leqslant
M}^{1}P_{>M}^{2}\psi _{N}\right\rangle \geqslant -2N^{2\beta }\left\Vert
V\right\Vert _{\infty }\left\Vert P_{\leqslant M}^{1}P_{>M}^{2}\psi
_{N}\right\Vert _{L^{2}}^{2}
\end{equation*}%
\begin{equation*}
2\left\langle \psi _{N},P_{\leqslant
M}^{1}P_{>M}^{2}V_{N12}P_{>M}^{1}P_{\leqslant M}^{2}\psi _{N}\right\rangle
\geqslant -4N^{2\beta }\left\Vert V\right\Vert _{\infty }\left\Vert
P_{\leqslant M}^{1}P_{>M}^{2}\psi _{N}\right\Vert _{L^{2}}^{2}
\end{equation*}%
\begin{equation*}
4\func{Re}\left\langle \psi _{N},P_{\leqslant
M}^{1}P_{>M}^{2}V_{N12}P_{>M}^{(2)}\psi _{N}\right\rangle \geqslant
-4N^{2\beta }\left\Vert V\right\Vert _{\infty }\left( \left\Vert
P_{\leqslant M}^{1}P_{>M}^{2}\psi _{N}\right\Vert _{L^{2}}^{2}+\left\Vert
P_{>M}^{(2)}\psi _{N}\right\Vert _{L^{2}}^{2}\right)
\end{equation*}%
\begin{equation*}
\left\langle \psi _{N},P_{>M}^{(2)}V_{N12}P_{>M}^{(2)}\psi _{N}\right\rangle
\geqslant -N^{2\beta }\left\Vert V\right\Vert _{\infty }\left\Vert
P_{>M}^{(2)}\psi _{N}\right\Vert _{L^{2}}^{2}.
\end{equation*}%
That is,%
\begin{eqnarray*}
&&\left\langle \psi _{N},V_{N12}\psi _{N}\right\rangle \\
&\geqslant &\left\langle \psi _{N},P_{\leqslant M}^{(2)}V_{N12}P_{\leqslant
M}^{(2)}\psi _{N}\right\rangle -2\varepsilon ^{2}\left\langle P_{\leqslant
M}^{(2)}\psi _{N},\left\vert V_{N12}\right\vert P_{\leqslant M}^{(2)}\psi
_{N}\right\rangle \\
&&-\left( \frac{16}{\varepsilon ^{2}}+10\right) N^{2\beta }\left\Vert
V\right\Vert _{\infty }\left\Vert P_{\leqslant M}^{1}P_{>M}^{2}\psi
_{N}\right\Vert _{L^{2}}^{2}-\left( \frac{4}{\varepsilon ^{2}}+5\right)
N^{2\beta }\left\Vert V\right\Vert _{\infty }\left\Vert P_{>M}^{(2)}\psi
_{N}\right\Vert _{L^{2}}^{2}
\end{eqnarray*}%
Thus,%
\begin{eqnarray*}
&&H_{12,\alpha } \\
&\geqslant &P_{\leqslant M}^{(2)}H_{12,\alpha }P_{\leqslant
M}^{(2)}-2\varepsilon ^{2}P_{\leqslant M}^{(2)}\left\vert V_{N12}\right\vert
P_{\leqslant M}^{(2)} \\
&&+\alpha P_{>M}^{(2)}S_{1}^{2}P_{>M}^{(2)}+\alpha
P_{>M}^{(2)}S_{2}^{2}P_{>M}^{(2)} \\
&&+\left( 1+\alpha \right) \left( P_{\leqslant
M}^{1}P_{>M}^{2}S_{1}^{2}P_{\leqslant M}^{1}P_{>M}^{2}+P_{\leqslant
M}^{1}P_{>M}^{2}S_{2}^{2}P_{\leqslant M}^{1}P_{>M}^{2}\right) \\
&&+\left( 1+\alpha \right) \left( P_{>M}^{1}P_{\leqslant
M}^{2}S_{1}^{2}P_{>M}^{1}P_{\leqslant M}^{2}+P_{>M}^{1}P_{\leqslant
M}^{2}S_{2}^{2}P_{>M}^{1}P_{\leqslant M}^{2}\right) \\
&&-P_{>M}^{(2)}\left\Vert V\right\Vert _{\infty }\frac{32}{\varepsilon ^{2}}%
N^{2\beta }P_{>M}^{(2)}-P_{\leqslant M}^{1}P_{>M}^{2}\left\Vert V\right\Vert
_{\infty }\frac{32}{\varepsilon ^{2}}N^{2\beta }P_{\leqslant
M}^{1}P_{>M}^{2}.
\end{eqnarray*}%
Whenever $M\geqslant 4\sqrt{\frac{\left\Vert V\right\Vert _{\infty }}{\alpha 
}}\frac{N^{\beta }}{\varepsilon }$, we have%
\begin{eqnarray*}
&&\alpha P_{>M}^{(2)}S_{1}^{2}P_{>M}^{(2)}+\alpha
P_{>M}^{(2)}S_{2}^{2}P_{>M}^{(2)}-P_{>M}^{(2)}\left\Vert V\right\Vert
_{\infty }\frac{32}{\varepsilon ^{2}}N^{2\beta }P_{>M}^{(2)} \\
&\geqslant &P_{>M}^{(2)}2\alpha M^{2}P_{>M}^{(2)}-P_{>M}^{(2)}\left\Vert
V\right\Vert _{\infty }\frac{32}{\varepsilon ^{2}}N^{2\beta }P_{>M}^{(2)} \\
&\geqslant &0
\end{eqnarray*}%
and%
\begin{eqnarray*}
&&\alpha \left( P_{\leqslant M}^{1}P_{>M}^{2}S_{1}^{2}P_{\leqslant
M}^{1}P_{>M}^{2}+P_{\leqslant M}^{1}P_{>M}^{2}S_{2}^{2}P_{\leqslant
M}^{1}P_{>M}^{2}\right) -P_{\leqslant M}^{1}P_{>M}^{2}\left\Vert
V\right\Vert _{\infty }\frac{32}{\varepsilon ^{2}}N^{2\beta }P_{\leqslant
M}^{1}P_{>M}^{2} \\
&\geqslant &P_{\leqslant M}^{1}P_{>M}^{2}2\alpha M^{2}P_{\leqslant
M}^{1}P_{>M}^{2}-P_{\leqslant M}^{1}P_{>M}^{2}\left\Vert V\right\Vert
_{\infty }\frac{32}{\varepsilon ^{2}}N^{2\beta }P_{\leqslant M}^{1}P_{>M}^{2}
\\
&\geqslant &0.
\end{eqnarray*}%
Thence%
\begin{equation*}
H_{12,\alpha }\geqslant P_{\leqslant M}^{(2)}H_{12,\alpha }P_{\leqslant
M}^{(2)}-2\varepsilon ^{2}P_{\leqslant M}^{(2)}\left\vert V_{N12}\right\vert
P_{\leqslant M}^{(2)}
\end{equation*}%
as claimed.
\end{proof}

\begin{lemma}[{Finite dimensional quantum de Finetti \protect\cite[Theorem
II.8]{OriginalDeFinette} or \protect\cite[Lemma 3.4]{LewinFocusing}}]
\footnote{%
To be precise, this version we are using is \cite[Lemma 3.4]{LewinFocusing}.
If one uses \cite[Theorem II.8]{OriginalDeFinette} to prove it, one will
have a $16$ instead of an $8$. The optimal coefficient is important in the
literature of de Finetti theorems, but it does not matter for our
application here.}\label{Lem:QdF}Assume $\left\{ \gamma _{N}^{(k)}\right\}
_{k=1}^{N}$ is the marginal density generated by a $N$-body wave function $%
\psi _{N}\in L_{s}^{2}(\mathbb{R}^{2N})$. Then there is a positive Borel
measure $d\mu _{N}$ supported on the unit sphere of $P_{\leqslant M}\left(
L_{s}^{2}(\mathbb{R}^{2})\right) $ such that%
\begin{equation*}
\limfunc{Tr}\left\vert P_{\leqslant M}^{(2)}\gamma _{N}^{(2)}P_{\leqslant
M}^{(2)}-\int_{S(P_{\leqslant M}\left( L_{s}^{2}(\mathbb{R}^{2})\right)
)}\left\vert \phi ^{\otimes 2}\right\rangle \left\langle \phi ^{\otimes
2}\right\vert d\mu _{N}(\phi )\right\vert \leqslant \frac{8D_{M}}{N}
\end{equation*}%
where $D_{M}$ is the dimension of $P_{\leqslant M}\left( L_{s}^{2}(\mathbb{R}%
^{2})\right) $.
\end{lemma}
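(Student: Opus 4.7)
The natural route is to reduce to the quantitative finite-dimensional quantum de Finetti theorem of Chiribella--Christandl--Koenig--Mitchison--Renner (CCKMR), applied on the finite-dimensional one-particle Hilbert space $\mathcal{H}_{M} := P_{\leqslant M}L^{2}(\mathbb{R}^{2})$ of dimension $D_{M}$. The candidate measure is the P-representation naturally associated to $\psi_{N}$,
\begin{equation*}
d\mu_{N}(\phi) := \binom{N+D_{M}-1}{D_{M}-1}\,|\langle \phi^{\otimes N},\psi_{N}\rangle|^{2}\,d\phi,
\end{equation*}
supported on the unit sphere $S(\mathcal{H}_{M})$, where $d\phi$ is the $U(\mathcal{H}_{M})$-invariant uniform probability measure on the sphere; this is a positive Borel measure of total mass $\|P_{\leqslant M}^{\otimes N}\psi_{N}\|^{2} \leqslant 1$. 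Note that, because $\phi \in \mathcal{H}_{M}$, the integrand only sees the projected vector $\hat{\psi}_{N} := P_{\leqslant M}^{\otimes N}\psi_{N}$.

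\textbf{Main steps.} First, observe that $\hat{\psi}_{N}$ lies in the symmetric tensor power $\mathrm{Sym}_{N}(\mathcal{H}_{M})$, a finite-dimensional subspace of dimension $\binom{N+D_{M}-1}{D_{M}-1}$, and recall Chiribella's coherent-state resolution of the identity
\begin{equation*}
I_{\mathrm{Sym}_{N}(\mathcal{H}_{M})} = \binom{N+D_{M}-1}{D_{M}-1} \int_{S(\mathcal{H}_{M})} |\phi^{\otimes N}\rangle\langle\phi^{\otimes N}|\,d\phi.
\end{equation*}
Second, apply the CCKMR bound to the subnormalized symmetric state $\hat{\rho}_{N} := |\hat{\psi}_{N}\rangle\langle\hat{\psi}_{N}|$ at level $k=2$: combinatorial/Weingarten identities inside $\mathrm{Sym}_{N}(\mathcal{H}_{M})$ yield
\begin{equation*}
\operatorname{Tr}\left|\hat{\rho}_{N}^{(2)} - \int |\phi^{\otimes 2}\rangle\langle\phi^{\otimes 2}|\,d\mu_{N}(\phi)\right| \leqslant \frac{4 D_{M}}{N}.
\end{equation*}
Third, convert $\hat{\rho}_{N}^{(2)} = \operatorname{Tr}_{3,\ldots,N}[P_{\leqslant M}^{\otimes N}\rho_{N}P_{\leqslant M}^{\otimes N}]$ into the target $P_{\leqslant M}^{(2)}\gamma_{N}^{(2)}P_{\leqslant M}^{(2)} = \operatorname{Tr}_{3,\ldots,N}[P_{\leqslant M}^{(2)}\rho_{N}P_{\leqslant M}^{(2)}]$ (where $\rho_{N}=|\psi_{N}\rangle\langle\psi_{N}|$) via bosonic exchange symmetry, obtaining a second trace-norm error bounded by $4D_{M}/N$. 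The triangle inequality then yields the advertised $8D_{M}/N$.

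\textbf{Main obstacle.} The technical heart is the third step, the reconciliation between the ``project-all-slots-then-partial-trace'' form $\hat{\rho}_{N}^{(2)}$ and the ``partial-trace-then-project-on-two-slots'' form $P_{\leqslant M}^{(2)}\gamma_{N}^{(2)}P_{\leqslant M}^{(2)}$. A direct a priori estimate via $\|(I - P_{\leqslant M,3}\cdots P_{\leqslant M,N})\psi_{N}\|$ does not decay as $N\to\infty$ (and without further energy hypotheses can even approach one), so the required $D_{M}/N$ gain must instead be extracted from the exchange symmetry between the two ``preserved'' slots and the $N-2$ ``traced'' slots, together with the dimensional combinatorics of $\mathrm{Sym}_{N}(\mathcal{H}_{M})$. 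This symmetry-based comparison is precisely the ingredient that upgrades the standard CCKMR theorem (stated for density matrices on $\mathcal{H}_{M}^{\otimes N}$) to the form needed here, where the underlying $\psi_{N}$ lives in the infinite-dimensional space $L^{2}_{s}(\mathbb{R}^{2N})$ and only two of the $N$ slots are projected.
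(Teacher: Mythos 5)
First, note that the paper does not prove this lemma at all: it is quoted verbatim as \cite[Lemma 3.4]{LewinFocusing} (with \cite[Theorem II.8]{OriginalDeFinette} as the underlying finite-dimensional input), so the only question is whether your sketch would actually establish it. Your first two steps are the standard finite-dimensional argument (Schur's lemma resolution of the identity on $\mathrm{Sym}_{N}(\mathcal{H}_{M})$ plus the CKMR bound), and those are fine in spirit. The gap is exactly where you flag it, in step 3 -- but it is not merely a technical obstacle to be overcome by ``exchange symmetry and dimensional combinatorics'': the comparison you assert there is false. Take $\psi_{N}=u^{\otimes N}$ with $\Vert P_{\leqslant M}u\Vert^{2}=\tfrac12$. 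Then $P_{\leqslant M}^{(2)}\gamma_{N}^{(2)}P_{\leqslant M}^{(2)}=|P_{\leqslant M}u\rangle\langle P_{\leqslant M}u|^{\otimes 2}$ has trace $\tfrac14$, while $\hat{\rho}_{N}^{(2)}=\Vert P_{\leqslant M}u\Vert^{2(N-2)}|P_{\leqslant M}u\rangle\langle P_{\leqslant M}u|^{\otimes 2}$ has trace $2^{-(N-2)}$; the trace-norm difference is $\approx\tfrac14$, not $O(D_{M}/N)$, and the state is fully symmetric so no averaging over slots can help. The same example kills your candidate measure at the root: its total mass is $\Vert P_{\leqslant M}^{\otimes N}\psi_{N}\Vert^{2}$, which here is $2^{-N}$ and hence far too small to approximate an object of trace $\tfrac14$. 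The $N$ particles outside $P_{\leqslant M}\mathcal{H}$ cannot be discarded wholesale; they must be accounted for.

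The actual proof of \cite[Lemma 3.4]{LewinFocusing} replaces your step 3 by \emph{geometric localization}: one writes the $P_{\leqslant M}$-localization of $|\psi_{N}\rangle\langle\psi_{N}|$ as a state on the truncated Fock space $\bigoplus_{n=0}^{N}\mathrm{Sym}_{n}(\mathcal{H}_{M})$, i.e.\ produces nonnegative operators $G_{N,n}$ on $\mathrm{Sym}_{n}(\mathcal{H}_{M})$ with $\sum_{n}\operatorname{Tr}G_{N,n}=1$ and
\begin{equation*}
P_{\leqslant M}^{(2)}\gamma_{N}^{(2)}P_{\leqslant M}^{(2)}=\sum_{n\geqslant 2}\binom{n}{2}\binom{N}{2}^{-1}\bigl(G_{N,n}\bigr)^{(2)},
\end{equation*}
applies the finite-dimensional de Finetti theorem to each sector $G_{N,n}$ separately, and assembles $\mu_{N}$ as the correspondingly weighted sum of the sector measures (this is also why $\mu_{N}$ is a subprobability measure rather than a probability measure, and why the error is $8D_{M}/N$ rather than the bare CKMR constant). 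In my product-state example this yields $G_{N,n}$ concentrated near $n\approx N/2$ and a measure of total mass $\approx\tfrac14$ at $\phi=P_{\leqslant M}u/\Vert P_{\leqslant M}u\Vert$, which is the right answer. So the missing idea is the Fock-space localization identity; without it your construction of $\mu_{N}$ and your step 3 cannot be repaired.
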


\begin{remark}
Lemma \ref{Lem:QdF} is the only place in which this paper needs $\omega >0$.
It is a major open problem to prove Lemma \ref{Lem:QdF} without assuming a
finite dimensional Hilbert space.
\end{remark}

\begin{lemma}
\label{Lem:HartreeEnergy}If $\left\Vert V\right\Vert _{L^{1}}<\frac{2\alpha 
}{C_{gn}^{4}}$, then there exists $\varepsilon $ which depends solely on $%
\left\Vert V\right\Vert _{L^{1}}$ such that, for all $\phi \in L^{2}(\mathbb{%
R}^{2})$ with $\left\Vert \phi \right\Vert _{L^{2}}=1$, we have%
\begin{equation*}
E_{\varepsilon }(\phi )=\left\langle \phi (x_{1})\phi (x_{2}),H_{12,\alpha
}^{\varepsilon }\phi (x_{1})\phi (x_{2})\right\rangle \geqslant 0
\end{equation*}%
where%
\begin{equation}
H_{12,\alpha }^{\varepsilon }=\alpha S_{1}^{2}+\alpha S_{2}^{2}+\frac{N-1}{N}%
V_{N12}-2\varepsilon ^{2}\left\vert V_{N12}\right\vert
\label{def:H_12,epsilon}
\end{equation}
\end{lemma}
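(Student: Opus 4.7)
The plan is to reduce the claim to a direct computation on the product state $\phi\otimes\phi$, and then use Young's convolution inequality together with the sharp 2D Gagliardo--Nirenberg inequality to produce a margin that absorbs the $\varepsilon^{2}|V_{N12}|$ term uniformly in $N$.

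First I would exploit the tensor structure: since $S_{1}$ and $S_{2}$ act on disjoint variables and $\|\phi\|_{L^{2}}=1$,
\[
\langle\phi\otimes\phi,(\alpha S_{1}^{2}+\alpha S_{2}^{2})\phi\otimes\phi\rangle=2\alpha\|S\phi\|_{L^{2}}^{2}.
\]
Because $V$ is nonpositive we have $V_{N12}=-|V_{N12}|$, so the two interaction terms combine into $-\bigl(\tfrac{N-1}{N}+2\varepsilon^{2}\bigr)|V_{N12}|$. Writing this expectation as $\int|\phi|^{2}\bigl(|V_{N}|*|\phi|^{2}\bigr)dx$ and applying Young's inequality $\||V_{N}|*|\phi|^{2}\|_{L^{2}}\le\|V_{N}\|_{L^{1}}\|\phi\|_{L^{4}}^{2}$, followed by Cauchy--Schwarz, and using the scaling identity $\|V_{N}\|_{L^{1}}=\|V\|_{L^{1}}$, yields
\[
\iint|V_{N}(x_{1}-x_{2})||\phi(x_{1})|^{2}|\phi(x_{2})|^{2}\,dx_{1}\,dx_{2}\le\|V\|_{L^{1}}\|\phi\|_{L^{4}}^{4}.
\]

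Next I would invoke the definition of $C_{gn}$ together with $\|\nabla\phi\|_{L^{2}}^{2}\le\|S\phi\|_{L^{2}}^{2}$ and $\|\phi\|_{L^{2}}=1$ to obtain $\|\phi\|_{L^{4}}^{4}\le C_{gn}^{4}\|S\phi\|_{L^{2}}^{2}$. Combining the above gives the single estimate
\[
E_{\varepsilon}(\phi)\ge\Bigl[\,2\alpha-\Bigl(\tfrac{N-1}{N}+2\varepsilon^{2}\Bigr)\|V\|_{L^{1}}C_{gn}^{4}\,\Bigr]\|S\phi\|_{L^{2}}^{2}.
\]
Under the standing hypothesis $\|V\|_{L^{1}}C_{gn}^{4}<2\alpha$, the quantity $2\alpha-\|V\|_{L^{1}}C_{gn}^{4}$ is a strictly positive number depending only on $\|V\|_{L^{1}}$ (and the fixed parameters $\alpha,C_{gn}$). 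Since $\frac{N-1}{N}<1$, I would choose
\[
\varepsilon^{2}=\tfrac{1}{2}\left(\tfrac{2\alpha}{\|V\|_{L^{1}}C_{gn}^{4}}-1\right)>0,
\]
which is independent of $N$ and depends solely on $\|V\|_{L^{1}}$, to guarantee that the bracket is nonnegative for all $N\ge 1$, finishing the proof.

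There is no real obstacle here: the point of the lemma is precisely that for a \emph{product} test state the focusing interaction can be controlled by the kinetic energy, because of the strict subcritical coupling encoded in $\|V\|_{L^{1}}<2\alpha/C_{gn}^{4}$. The content of the full stability proof will instead be to propagate this product-state bound to arbitrary bosonic $\psi_{N}$ via the finite dimensional de Finetti decomposition from Lemma \ref{Lem:QdF}, with Lemma \ref{Lem:Lewin1} providing the cutoff onto $P_{\le M}^{(2)}$; the present lemma is the Hartree-level ingredient that will be integrated against $d\mu_{N}$ in the next step.
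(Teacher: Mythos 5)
Your proof is correct and follows essentially the same route as the paper's: expand $E_{\varepsilon}(\phi)$ on the product state, bound the interaction term by $\|V\|_{L^{1}}\|\phi\|_{L^{4}}^{4}$ via Cauchy--Schwarz and Young, apply the sharp Gagliardo--Nirenberg inequality with $\|\nabla\phi\|_{L^{2}}\leq\|S\phi\|_{L^{2}}$, and choose $\varepsilon$ small using the strict inequality $\|V\|_{L^{1}}C_{gn}^{4}<2\alpha$. The only cosmetic differences are that you use $V\leq 0$ to merge the two interaction terms (getting $\tfrac{N-1}{N}+2\varepsilon^{2}$ where the paper keeps $1+2\varepsilon^{2}$) and that you write out the admissible $\varepsilon$ explicitly.
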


\begin{proof}
We first compute directly that%
\begin{eqnarray*}
E_{\varepsilon }(\phi ) &=&2\alpha \int \left\vert S\phi \right\vert ^{2}dx+%
\frac{N-1}{N}\int V_{N12}\left\vert \phi (x_{1})\phi (x_{2})\right\vert
^{2}dx_{1}dx_{2} \\
&&-2\varepsilon ^{2}\int \left\vert V_{N12}\right\vert \left\vert \phi
(x_{1})\phi (x_{2})\right\vert ^{2}dx_{1}dx_{2}.
\end{eqnarray*}%
Apply Cauchy-Schwarz,%
\begin{equation*}
\geqslant 2\alpha \int \left\vert S\phi \right\vert ^{2}dx-(1+2\varepsilon
^{2})\left\Vert \left\vert \phi \right\vert ^{2}\right\Vert
_{L^{2}}\left\Vert V_{N}\ast \left\vert \phi \right\vert ^{2}\right\Vert
_{L^{2}}.
\end{equation*}%
Use Young's convolution inequality,%
\begin{eqnarray*}
&\geqslant &2\alpha \int \left\vert S\phi \right\vert ^{2}dx-(1+2\varepsilon
^{2})\left\Vert V_{N}\right\Vert _{L^{1}}\left\Vert \left\vert \phi
\right\vert ^{2}\right\Vert _{L^{2}}\left\Vert \left\vert \phi \right\vert
^{2}\right\Vert _{L^{2}} \\
&=&2\alpha \int \left\vert S\phi \right\vert ^{2}dx-(1+2\varepsilon
^{2})\left\Vert V\right\Vert _{L^{1}}\left\Vert \phi \right\Vert
_{L^{4}}^{4}.
\end{eqnarray*}%
With estimate (\ref{eq:G-N estimate}), we get to%
\begin{equation*}
E_{\varepsilon }(\phi )\geqslant 2\alpha \int \left\vert S\phi \right\vert
^{2}dx-(1+2\varepsilon ^{2})C_{gn}^{4}\left\Vert V\right\Vert
_{L^{1}}\left\Vert \nabla \phi \right\Vert _{L^{2}}^{2}.
\end{equation*}%
Hence, when $\left\Vert V\right\Vert _{L^{1}}<\frac{2\alpha }{C_{gn}^{4}}$,
we can select $\varepsilon $ small enough so that%
\begin{equation*}
E_{\varepsilon }(\phi )\geqslant 0.
\end{equation*}
\end{proof}

With Lemmas \ref{Lem:Lewin1} to \ref{Lem:HartreeEnergy}, we now prove
Proposition \ref{prop:pre 2 k=1 energy}.

\begin{proof}[Proof of Proposition \protect\ref{prop:pre 2 k=1 energy}]
The trick is to notice the equaltiy%
\begin{equation*}
\left\langle P_{\leqslant M}^{(2)}\psi _{N},H_{12,\alpha }^{\varepsilon
}P_{\leqslant M}^{(2)}\psi _{N}\right\rangle =\limfunc{Tr}H_{12,\alpha
}^{\varepsilon }P_{\leqslant M}^{(2)}\gamma _{N}^{(2)}P_{\leqslant M}^{(2)}
\end{equation*}%
where $H_{12,\alpha }^{\varepsilon }$ is defined in (\ref{def:H_12,epsilon}%
). It helps because%
\begin{eqnarray*}
&&\left\langle \psi _{N},\left( 2C_{0}+H_{12,\alpha }\right) \psi
_{N}\right\rangle \\
&\geqslant &2C_{0}+\left\langle P_{\leqslant M}^{(2)}\psi _{N},H_{12,\alpha
}^{\varepsilon }P_{\leqslant M}^{(2)}\psi _{N}\right\rangle \\
&=&2C_{0}+\limfunc{Tr}H_{12,\alpha }^{\varepsilon }P_{\leqslant M}^{\otimes
2}\gamma _{N}^{(2)}P_{\leqslant M}^{(2)}
\end{eqnarray*}%
provided that $M\geqslant 4\sqrt{\frac{\left\Vert V\right\Vert _{\infty }}{%
\alpha }}\frac{N^{\beta }}{\varepsilon }$, by Lemma \ref{Lem:Lewin1}.

Rewrite%
\begin{eqnarray*}
&&\limfunc{Tr}H_{12,\alpha }^{\varepsilon }P_{\leqslant M}^{(2)}\gamma
_{N}^{(2)}P_{\leqslant M}^{(2)} \\
&=&\limfunc{Tr}\int_{S(P_{\leqslant M}\left( L_{s}^{2}(\mathbb{R}%
^{2})\right) )}H_{12,\alpha }^{\varepsilon }\left\vert \phi ^{\otimes
2}\right\rangle \left\langle \phi ^{\otimes 2}\right\vert d\mu _{N}(\phi ) \\
&&+\left[ \limfunc{Tr}H_{12,\alpha }^{\varepsilon }P_{\leqslant
M}^{(2)}\gamma _{N}^{(2)}P_{\leqslant M}^{(2)}-\int_{S(P_{\leqslant M}\left(
L_{s}^{2}(\mathbb{R}^{2})\right) )}H_{12,\alpha }^{\varepsilon }\left\vert
\phi ^{\otimes 2}\right\rangle \left\langle \phi ^{\otimes 2}\right\vert
d\mu _{N}(\phi )\right]
\end{eqnarray*}%
We can use the inequality $\limfunc{Tr}AB\leqslant \left\Vert A\right\Vert
_{op}\limfunc{Tr}\left\vert B\right\vert $ to get to%
\begin{eqnarray*}
&&\left\langle \psi _{N},\left( 2C_{0}+H_{12,\alpha }\right) \psi
_{N}\right\rangle \\
&\geqslant &2C_{0}+\int_{S(P_{\leqslant M}\left( L_{s}^{2}(\mathbb{R}%
^{2})\right) )}E_{\varepsilon }(\phi )d\mu _{N}(\phi ) \\
&&-\left\Vert H_{12,\alpha }^{\varepsilon }\right\Vert _{op}\limfunc{Tr}%
\left\vert P_{\leqslant M}^{(2)}\gamma _{N}^{(2)}P_{\leqslant
M}^{(2)}-\int_{S(P_{\leqslant M}\left( L_{s}^{2}(\mathbb{R}^{2})\right)
)}\left\vert \phi ^{\otimes 2}\right\rangle \left\langle \phi ^{\otimes
2}\right\vert d\mu _{N}(\phi )\right\vert
\end{eqnarray*}%
Now fix $\varepsilon $ as in Lemma \ref{Lem:HartreeEnergy}, apply Lemma \ref%
{Lem:HartreeEnergy} on the second term, we then see the second term is
nonnegative and we can drop it, that is,%
\begin{eqnarray*}
&&\left\langle \psi _{N},\left( 2C_{0}+H_{12,\alpha }\right) \psi
_{N}\right\rangle \\
&\geqslant &2C_{0}-\left\Vert H_{12,\alpha }^{\varepsilon }\right\Vert _{op}%
\limfunc{Tr}\left\vert P_{\leqslant M}^{(2)}\gamma _{N}^{(2)}P_{\leqslant
M}^{(2)}-\int_{S(P_{\leqslant M}\left( L_{s}^{2}(\mathbb{R}^{2})\right)
)}\left\vert \phi ^{\otimes 2}\right\rangle \left\langle \phi ^{\otimes
2}\right\vert d\mu _{N}(\phi )\right\vert .
\end{eqnarray*}
Utilize Lemma \ref{Lem:QdF} on the last term, it becomes%
\begin{equation*}
\left\langle \psi _{N},\left( 2C_{0}+H_{12,\alpha }\right) \psi
_{N}\right\rangle \geqslant 2C_{0}-\left\Vert H_{12,\alpha }^{\varepsilon
}\right\Vert _{op}\frac{8D_{M}}{N}.
\end{equation*}%
On the one hand, with frequency smaller than $M$, the Hermite operator in 2D
has at most $M^{4}$ eigenfunctions, that is%
\begin{equation*}
D_{M}\leqslant \left( M^{2}\right) ^{2}\leqslant \frac{CN^{4\beta }}{%
\varepsilon ^{4}}.
\end{equation*}%
On the other hand,%
\begin{equation*}
\left\Vert H_{12,\alpha }^{\varepsilon }\right\Vert _{op}\leqslant 2\alpha
M^{2}+(1+2\varepsilon ^{2})\left\Vert V\right\Vert _{L^{\infty }}N^{2\beta
}\leqslant \frac{CN^{2\beta }}{\varepsilon ^{2}}.
\end{equation*}%
Thus we conclude that%
\begin{equation*}
\left\langle \psi _{N},\left( 2C_{0}+H_{12,\alpha }\right) \psi
_{N}\right\rangle \geqslant 2C_{0}-\frac{CN^{6\beta }}{N}\geqslant 0
\end{equation*}%
provided that $N$ is large enough and $\beta <\frac{1}{6}$. Thence we have
completed the proof of Proposition \ref{prop:pre 2 k=1 energy}, concluded
Theorem \ref{thm:pre 1 k=1 energy}, and obtained Theorem \ref{thm:k=1 energy}%
.
\end{proof}

\begin{remark}
\label{rem:key idea}The above proof is exactly what we meant by saying
"though $V_{N}$ gets more singular as $N\rightarrow \infty $, but larger $N$
beats it." in the introduction.
\end{remark}

\subsection{High Energy Estimates when $k>1$\label{Sec:EnergyEstimate:k=k+2}}

Assuming \eqref{eq:main energy estimate} holds for $k$, we now prove it for $%
k+2$. Using the induction hypothesis, we arrive at%
\begin{equation}
\frac{1}{c_{0}^{k+2}}\langle \psi _{N},(N^{-1}H_{N}+1)^{k+2}\psi _{N}\rangle
\geqslant \frac{1}{c_{0}^{2}}\langle S^{(k)}(N^{-1}H_{N}+1)\psi
_{N},S^{(k)}(N^{-1}H_{N}+1)\psi _{N}\rangle .  \label{E:1-1}
\end{equation}%
We decompose $N^{-1}H_{N}+1$ like in (\ref{E:Hamiltonian Decomposition}),
but this time we separate the sum as 
\begin{equation*}
N^{-1}H_{N}+1=\frac{1}{N(N-1)}\sum_{\substack{ 1\leq i<j\leq N  \\ i\leq k}}%
\left( 2+H_{ij}\right) +\frac{1}{N(N-1)}\sum_{\substack{ 1\leq i<j\leq N  \\ %
i>k}}\left( 2+H_{ij}\right) .
\end{equation*}%
Then (\ref{E:1-1}) unfold into three terms if we combine the two crossing
terms, namely 
\begin{equation*}
\frac{1}{c_{0}^{k+2}}\left\langle \psi _{N},(N^{-1}H_{N}+1)^{k+2}\psi
_{N}\right\rangle \geqslant M+E_{C}+E_{P}
\end{equation*}%
where the main term $M$ is%
\begin{equation*}
M=\frac{1}{c_{0}^{2}N^{2}(N-1)^{2}}\sum_{\substack{ 1\leq i_{1}<j_{1}\leq N 
\\ 1\leq i_{2}<j_{2}\leq N  \\ \text{such that }i_{1}>k,\;i_{2}>k}}%
\left\langle S^{(k)}\left( 2+H_{i_{1}j_{1}}\right) \psi _{N},S^{(k)}\left(
2+H_{i_{2}j_{2}}\right) \psi _{N}\right\rangle ,
\end{equation*}%
the cross error term $E$ is%
\begin{equation*}
E_{C}=\frac{1}{c_{0}^{2}N^{2}(N-1)^{2}}\sum_{\substack{ 1\leq
i_{1}<j_{1}\leq N  \\ 1\leq i_{2}<j_{2}\leq N  \\ \text{such that }i_{1}\leq
k,\;i_{2}>k}}2\func{Re}\left\langle S^{(k)}\left( 2+H_{i_{1}j_{1}}\right)
\psi _{N},S^{(k)}\left( 2+H_{i_{2}j_{2}}\right) \psi _{N}\right\rangle ,
\end{equation*}%
and the nonnegative error term $E_{P}$ is%
\begin{eqnarray*}
E_{P} &=&\frac{1}{c_{0}^{2}N^{2}(N-1)^{2}}\sum_{\substack{ 1\leq
i_{1}<j_{1}\leq N  \\ 1\leq i_{2}<j_{2}\leq N  \\ \text{such that }i_{1}\leq
k,\;i_{2}\leq k}}\left\langle S^{(k)}\left( 2+H_{i_{1}j_{1}}\right) \psi
_{N},\;S^{(k)}\left( 2+H_{i_{2}j_{2}}\right) \psi _{N}\right\rangle \\
&=&\frac{1}{c_{0}^{2}N^{2}(N-1)^{2}}\left\langle \sum_{\substack{ 1\leq
i<j\leq N  \\ i\leqslant k}}S^{(k)}\left( 2+H_{i_{1}j_{1}}\right) \psi
_{N},\sum_{\substack{ 1\leq i<j\leq N  \\ i\leqslant k}}S^{(k)}\left(
2+H_{i_{2}j_{2}}\right) \psi _{N}\right\rangle \geqslant 0.
\end{eqnarray*}%
Here, we distinguish the terms by the cardinality of the sums. Implicitly,
we always have $N>>k$, hence the main contribution comes from the sum $%
\sum_{k<i<N}$. In fact, $M$ has $\sim N^{4}$ summands inside while the cross
error term $E_{C}$ has $\sim N^{3}$ summands.

Since the nonnegative error term $E_{P}\geqslant 0$, we drop it and (\ref%
{E:1-1}) becomes 
\begin{equation}
\frac{1}{c_{0}^{k+2}}\left\langle \psi _{N},(N^{-1}H_{N}+1)^{k+2}\psi
_{N}\right\rangle \geqslant M+E_{C}.  \label{E:1-2}
\end{equation}%
The strategy is to first extract the desired kinetic energy part from the
main term $M$ in \S \ref{sec:main term in energy} then prove that the cross
error term $E_{C}$ can be absorbed into $M$ for large $N$ in \S \ref%
{sec:error term in energy}. During the course of the proof, we will need the
following lemma.

\begin{lemma}[{\protect\cite[Lemma A.2]{C-HFocusing}}]
\quad \label{L:commuteop}If $A_{1}\geq A_{2}\geq 0$, $B_{1}\geq B_{2}\geq 0$
and $A_{i}B_{j}=B_{j}A_{i}$ for all $1\leq i,j\leq 2$, then $A_{1}B_{1}\geq
A_{2}B_{2}$.
\end{lemma}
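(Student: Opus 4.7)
The plan is to use a standard telescoping decomposition together with the fact that the product of two commuting nonnegative self-adjoint operators is itself nonnegative. Specifically, I will write
\begin{equation*}
A_1 B_1 - A_2 B_2 \;=\; A_1(B_1 - B_2) \;+\; (A_1 - A_2)B_2,
\end{equation*}
which is a valid operator identity on the common domain (all four operators pairwise commute by hypothesis). The goal then reduces to showing that each of the two summands on the right is a nonnegative self-adjoint operator.

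For that reduction I would invoke the auxiliary claim: if $P, Q \geq 0$ are self-adjoint and commute, then $PQ \geq 0$. This follows from the spectral (functional) calculus: the positive square root $P^{1/2}$ exists and, since $Q$ commutes with $P$, it commutes with every Borel function of $P$, in particular with $P^{1/2}$. Hence for any vector $\psi$ in the appropriate domain,
\begin{equation*}
\langle \psi, PQ \psi\rangle \;=\; \langle \psi, P^{1/2} Q P^{1/2} \psi\rangle \;=\; \langle P^{1/2}\psi,\, Q\, P^{1/2}\psi\rangle \;\geq\; 0,
\end{equation*}
using $Q \geq 0$ in the last step. Symmetry of $PQ$ is automatic from the commutation of $P$ and $Q$, so $PQ$ is a nonnegative self-adjoint operator.

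Applying this auxiliary claim twice, once with $P = A_1 \geq 0$ and $Q = B_1 - B_2 \geq 0$ (which commute because every $A_i$ commutes with every $B_j$), and once with $P = A_1 - A_2 \geq 0$ and $Q = B_2 \geq 0$, gives that both summands in the decomposition above are nonnegative. Adding them yields $A_1 B_1 - A_2 B_2 \geq 0$, which is the desired conclusion.

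I do not anticipate any serious obstacle. The only point that requires care is the justification that $P^{1/2}$ commutes with $Q$, which is pure functional calculus, plus the usual caveat about common cores/domains when the operators are unbounded (as they are in the application, where $A_i, B_j$ are built from $S_j^2$ and $H_N$); since the relevant operators in the paper are polynomials in pairwise commuting nonnegative self-adjoint operators, they possess a common invariant dense core (e.g., finite linear combinations of joint spectral eigenvectors, or Schwartz functions), so the formal manipulations above are rigorous.
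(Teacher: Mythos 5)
Your proof is correct and is the standard argument: the telescoping identity $A_1B_1-A_2B_2=A_1(B_1-B_2)+(A_1-A_2)B_2$ combined with the fact that a product of commuting nonnegative self-adjoint operators is nonnegative (via $\langle\psi,PQ\psi\rangle=\langle P^{1/2}\psi,QP^{1/2}\psi\rangle\geq 0$). The present paper does not reprove the lemma but cites \cite[Lemma A.2]{C-HFocusing}, whose proof proceeds in essentially the same way, so your proposal matches the intended argument; your remarks on domains and on interpreting commutation via the spectral calculus are the right caveats for the unbounded operators arising in the application.
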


\subsubsection{Handling the Main Term\label{sec:main term in energy}}

Commute $\left( 1+H_{i_{1}j_{1}}\right) $ and $\left(
1+H_{i_{2}j_{2}}\right) $ with $S^{(k)}$ in $M$, 
\begin{equation}
M=\frac{1}{c_{0}^{2}N^{2}(N-1)^{2}}\sum_{\substack{ 1\leq i_{1}<j_{1}\leq N 
\\ 1\leq i_{2}<j_{2}\leq N  \\ \text{such that }i_{1}>k,\;i_{2}>k}}\langle
S^{(k)}\psi _{N},\;\left( 2+H_{i_{1}j_{1}}\right) \left(
2+H_{i_{2}j_{2}}\right) S^{(k)}\psi _{N}\rangle  \label{E:1-3}
\end{equation}%
We decompose the sum into three pieces%
\begin{equation*}
M=M_{1}+M_{2}+M_{3}
\end{equation*}%
where $M_{1}$ consists of the terms with 
\begin{equation*}
\{i_{1},j_{1}\}\cap \{i_{2},j_{2}\}=\varnothing ,
\end{equation*}%
$M_{2}$ consists of the terms with 
\begin{equation*}
|\{i_{1},j_{1}\}\cap \{i_{2},j_{2}\}|=1,
\end{equation*}%
and $M_{3}$ consists of the terms with 
\begin{equation*}
|\{i_{1},j_{1}\}\cap \{i_{2},j_{2}\}|=2.
\end{equation*}
By symmetry of $\psi _{N}$, we have 
\begin{eqnarray*}
M_{1} &=&\frac{1}{4c_{0}^{2}}\langle \left( 2+H_{(k+1)(k+2)}\right)
S^{(k)}\psi _{N},\left( 2+H_{(k+3)(k+4)}\right) S^{(k)}\psi _{N}\rangle \\
M_{2} &=&\frac{1}{2c_{0}^{2}}N^{-1}\langle \left( 2+H_{(k+1)(k+2)}\right)
S^{(k)}\psi _{N},\left( 2+H_{(k+2)(k+3)}\right) S^{(k)}\psi _{N}\rangle \\
M_{3} &=&\frac{1}{2c_{0}^{2}}N^{-2}\langle \left( 2+H_{(k+1)(k+2)}\right)
S^{(k)}\psi _{N},\left( 2+H_{(k+1)(k+2)}\right) S^{(k)}\psi _{N}\rangle
\end{eqnarray*}

We drop $M_{3}$ since it is nonnegative. Thus (\ref{E:1-3}) becomes%
\begin{equation*}
M\geqslant M_{1}+M_{2}\text{.}
\end{equation*}%
By the fact that%
\begin{equation*}
\left[ 2+H_{(k+1)(k+2)},2+H_{(k+3)(k+4)}\right] =0,
\end{equation*}%
we deduce 
\begin{equation*}
M_{1}\geqslant \frac{4(1-\alpha )^{2}}{4c_{0}^{2}}\langle S^{(k)}\psi
_{N},S_{k+1}^{2}S_{k+2}^{2}S^{(k)}\psi _{N}\rangle
\end{equation*}%
using Theorem \ref{thm:pre 1 k=1 energy} and Lemma \ref{L:commuteop}. Recall 
$c_{0}=\min (\frac{1-\alpha }{\sqrt{2}},\frac{1}{2})$, hence%
\begin{equation}
M_{1}\geqslant 2\langle S^{(k+2)}\psi _{N},S^{(k+2)}\psi _{N}\rangle
=2\left\Vert S^{(k+2)}\psi _{N}\right\Vert _{L^{2}}^{2}.  \label{E:1-4}
\end{equation}

We now deal with $M_{2}$. We expand 
\begin{equation*}
M_{2}=M_{21}+M_{22}+M_{23}
\end{equation*}%
where 
\begin{eqnarray*}
M_{21} &=&\frac{N^{-1}}{2c_{0}^{2}}\langle \left(
2+S_{k+1}^{2}+S_{k+2}^{2}\right) S^{(k)}\psi _{N},\left(
2+S_{k+2}^{2}+S_{k+3}^{2}\right) S^{(k)}\psi _{N}\rangle , \\
M_{22} &=&\frac{N^{-1}}{c_{0}^{2}}\func{Re}\langle \left(
2+S_{k+1}^{2}+S_{k+2}^{2}\right) S^{(k)}\psi _{N},V_{N(k+2)(k+3)}S^{(k)}\psi
_{N}\rangle , \\
M_{23} &=&\frac{N^{-1}}{2c_{0}^{2}}\langle V_{N(k+1)(k+2)}S^{(k)}\psi
_{N},V_{N(k+2)(k+3)}S^{(k)}\psi _{N}\rangle .
\end{eqnarray*}

We keep only the $S_{k+2}^{4}$ terms inside $M_{21}$, which carries as many
derivatives as in (\ref{E:1-4}) and hence is the second main contribution.
That is%
\begin{equation}
M_{21}\geqslant \frac{N^{-1}}{2c_{0}^{2}}\langle
S_{k+2}^{2}S_{k+2}^{2}S^{(k)}\psi _{N},S^{(k)}\psi _{N}\rangle \geqslant
2N^{-1}\langle S_{k+1}^{4}S^{(k)}\psi _{N},S^{(k)}\psi _{N}\rangle
=2N^{-1}\Vert S_{1}S^{(k+1)}\psi _{N}\Vert _{L^{2}}^{2}\text{.}
\label{E:1-5}
\end{equation}

For $M_{22}$, we first rearrange the derivatives%
\begin{eqnarray*}
M_{22} &=&\frac{2N^{-1}}{c_{0}^{2}}\langle S^{(k)}\psi
_{N},V_{N(k+2)(k+3)}S^{(k)}\psi _{N}\rangle \\
&&+\frac{N^{-1}}{c_{0}^{2}}\langle S^{(k+1)}\psi
_{N},V_{N(k+2)(k+3)}S^{(k+1)}\psi _{N}\rangle \\
&&+\frac{N^{-1}}{c_{0}^{2}}\langle S_{k+2}S^{(k)}\psi
_{N},V_{N(k+2)(k+3)}S_{k+2}S^{(k)}\psi _{N}\rangle \\
&&+\frac{N^{\beta -1}}{c_{0}^{2}}\func{Re}\langle S_{k+2}S^{(k)}\psi
_{N},\left( \nabla V\right) _{N(k+2)(k+3)}S^{(k)}\psi _{N}\rangle
\end{eqnarray*}%
Notice that, in the above, we have used the fact that $\nabla $ is the only
thing inside $S_{j}$ that needs the Leibniz's rule.\footnote{%
This is a fact proved and used by many authors. See, for example, \cite%
{Thangavelu}.} Do H\"{o}lder,%
\begin{eqnarray*}
\left\vert M_{22}\right\vert &\lesssim &N^{-1}\left\Vert
V_{N(k+2)(k+3)}\right\Vert _{L_{x_{k+3}}^{1}}\left\Vert S^{(k)}\psi
_{N}\right\Vert _{L^{2}L_{x_{k+3}}^{\infty }}^{2} \\
&&+N^{-1}\left\Vert V_{N(k+2)(k+3)}\right\Vert _{L_{x_{k+3}}^{1+}}\left\Vert
S^{(k+1)}\psi _{N}\right\Vert _{L^{2}L_{x_{k+3}}^{\infty -}}^{2} \\
&&+N^{-1}\left\Vert V_{N(k+2)(k+3)}\right\Vert _{L_{x_{k+3}}^{1+}}\left\Vert
S_{k+2}S^{(k)}\psi _{N}\right\Vert _{L^{2}L_{x_{k+3}}^{\infty -}}^{2} \\
&&+N^{\beta -1}\left\Vert \left( \nabla V\right) _{N(k+2)(k+3)}\right\Vert
_{L_{x_{k+3}}^{1+}}\left\Vert S_{k+2}S^{(k)}\psi _{N}\right\Vert
_{L^{2}L_{x_{k+3}}^{\infty -}}\left\Vert S^{(k)}\psi _{N}\right\Vert
_{L^{2}L_{x_{k+3}}^{\infty -}}\text{,}
\end{eqnarray*}%
Apply Sobolev,%
\begin{eqnarray}
\left\vert M_{22}\right\vert &\lesssim &N^{-1}\left\Vert S^{(k+2)}\psi
_{N}\right\Vert _{L^{2}}^{2}+N^{-1+}\left\Vert S^{(k+2)}\psi _{N}\right\Vert
_{L^{2}}^{2}+N^{-1+}\left\Vert S^{(k+2)}\psi _{N}\right\Vert _{L^{2}}^{2}
\label{E:1-6} \\
&&+N^{\beta -1+}\left\Vert S^{(k+2)}\psi _{N}\right\Vert _{L^{2}}^{2}  \notag
\\
&\leqslant &CN^{\beta -1+}\left\Vert S^{(k+2)}\psi _{N}\right\Vert
_{L^{2}}^{2},  \notag
\end{eqnarray}%
which is easily absorbed into the positive contributions. Alert reader
should notice the loss due to the failure of the 2D endpoint Sobolev: $\frac{%
1}{2}-\frac{1}{\infty }=\frac{1}{2}$.

Do the same thing for $M_{23}$,%
\begin{eqnarray}
\left\vert M_{23}\right\vert &\lesssim &N^{-1}\left\Vert
V_{N(k+1)(k+2)}\right\Vert _{L_{x_{k+1}}^{1+}}\left\Vert
V_{N(k+2)(k+3)}\right\Vert _{L_{x_{k+3}}^{1+}}\left\Vert S^{(k)}\psi
_{N}\right\Vert _{L^{2}L_{x_{k+1}}^{\infty -}L_{x_{k+3}}^{\infty -}}
\label{E:1-7} \\
&\leqslant &CN^{-1+}\left\Vert S^{(k+2)}\psi _{N}\right\Vert _{L^{2}}^{2}. 
\notag
\end{eqnarray}

Collecting (\ref{E:1-4})-(\ref{E:1-7}), we arrive at the following estimate
for $M$: 
\begin{equation}
M\geqslant \left( 2-CN^{\beta -1+}\right) \left( \Vert S^{(k+2)}\psi \Vert
_{L^{2}}^{2}+N^{-1}\Vert S_{1}S^{(k+1)}\psi \Vert _{L^{2}}^{2}\right) .
\label{E:1-8}
\end{equation}

\subsubsection{Handling the Cross Error Term\label{sec:error term in energy}}

Next we turn our attention to estimating $E_{C}$. We will prove that%
\begin{equation}
E_{C}\geqslant -C\max (N^{2\beta -\frac{3}{2}+},N^{\beta -1+})\left( \Vert
S^{(k+2)}\psi _{N}\Vert _{L^{2}}^{2}+N^{-1}\Vert S_{1}S^{(k+1)}\psi
_{N}\Vert _{L^{2}}^{2}\right) .  \label{estimate:end for error}
\end{equation}%
That is, $E_{C}$ is an absorbable error if added into (\ref{E:1-8}).

We assume $k\geqslant 1$, since $E_{C}=0$ when $k=0$. We decompose the sum
into three parts%
\begin{equation}
E_{C}=E_{1}+E_{2}+E_{3}  \label{E:1-9}
\end{equation}%
where $E_{1}$ contains the terms with $j_{1}\leqslant k$, $E_{2}$ contains
the terms with $j_{1}>k$ and $j_{1}\in \{i_{2},j_{2}\}$, and $E_{3}$
contains those terms with $j_{1}>k$, $j_{1}\neq i_{2}$ and $j_{1}\neq j_{2}$.

Since $H_{ij}=H_{ji}$, by symmetry of $\psi _{N}$, we have%
\begin{eqnarray*}
E_{1} &=&k^{2}N^{-2}\langle S^{(k)}\left( 2+H_{12}\right) \psi
_{N},S^{(k)}\left( 2+H_{(k+1)(k+2)}\right) \psi _{N}\rangle \\
E_{2} &=&kN^{-2}\langle S^{(k)}\left( 2+H_{1(k+1)}\right) \psi
_{N},S^{(k)}\left( 2+H_{(k+1)(k+2)}\right) \psi _{N}\rangle \\
E_{3} &=&N^{-1}\langle S^{(k)}\left( 2+H_{1(k+1)}\right) \psi
_{N},S^{(k)}\left( 2+H_{(k+2)(k+3)}\right) \psi _{N}\rangle
\end{eqnarray*}

We first address $E_{1}$. We commute $\left( 2+H_{12}\right) $ with $S^{(k)}$
and obtain 
\begin{equation*}
E_{1}=E_{11}+E_{12}+E_{13},
\end{equation*}%
where%
\begin{eqnarray*}
E_{11} &=&N^{-2}\langle \left( 2+H_{12}\right) S^{(k)}\psi _{N},\left(
2+H_{(k+1)(k+2)}\right) S^{(k)}\psi _{N}\rangle \\
E_{12} &=&N^{-2}\langle S_{1}\left[ S_{2},H_{12}\right] \frac{S^{(k)}}{%
S_{1}S_{2}}\psi _{N},\left( 2+H_{(k+1)(k+2)}\right) S^{(k)}\psi _{N}\rangle
\\
E_{13} &=&N^{-2}\langle \left[ S_{1},H_{12}\right] \frac{S^{(k)}}{S_{1}}\psi
_{N},\left( 2+H_{(k+1)(k+2)}\right) S^{(k)}\psi _{N}\rangle
\end{eqnarray*}%
By Theorem \ref{thm:pre 1 k=1 energy} and Lemma \ref{L:commuteop}, $%
E_{11}\geqslant 0$ and we drop it. For $E_{12}$, since $[S_{2},H_{12}]=-N^{%
\beta }(\nabla V)_{N12}$, expanding $\left( 2+H_{(k+1)(k+2)}\right) $ gives%
\begin{eqnarray*}
E_{12} &=&-2N^{\beta -2}\langle (\nabla V)_{N12}\frac{S^{(k)}}{S_{1}S_{2}}%
\psi _{N},S_{1}S^{(k)}\psi _{N}\rangle \\
&&-N^{\beta -2}\langle (\nabla V)_{N12}\frac{S^{(k)}}{S_{1}S_{2}}\psi
_{N},(S_{k+1}^{2}+S_{k+2}^{2})S_{1}S^{(k)}\psi _{N}\rangle \\
&&-N^{\beta -2}\langle (\nabla V)_{N12}\frac{S^{(k)}}{S_{1}S_{2}}\psi
_{N},V_{N(k+1)(k+2)}S_{1}S^{(k)}\psi _{N}\rangle
\end{eqnarray*}%
Use Holder, 
\begin{eqnarray*}
&&\left\vert E_{12}\right\vert \\
&\lesssim &N^{\beta -\frac{3}{2}}\left\Vert (\nabla V)_{N12}\right\Vert
_{L_{x_{1}}^{2+}}\left\Vert \frac{S^{(k)}}{S_{1}S_{2}}\psi _{N}\right\Vert
_{L^{2}L_{x_{1}}^{\infty -}}N^{-\frac{1}{2}}\left\Vert S_{1}S^{(k)}\psi
_{N}\right\Vert _{L^{2}} \\
&&+N^{\beta -\frac{3}{2}}\left\Vert (\nabla V)_{N12}\right\Vert
_{L_{x_{1}}^{2+}}\left\Vert \frac{S^{(k+1)}}{S_{1}S_{2}}\psi _{N}\right\Vert
_{L^{2}L_{x_{1}}^{\infty -}}N^{-\frac{1}{2}}\left\Vert S_{1}S^{(k+1)}\psi
_{N}\right\Vert _{L^{2}} \\
&&+N^{\beta -\frac{3}{2}}\left\Vert (\nabla V)_{N12}\right\Vert
_{L_{x_{1}}^{2+}}\left\Vert V_{N(k+1)(k+2)}\right\Vert
_{L_{x_{k+1}}^{1+}}\left\Vert \frac{S^{(k)}}{S_{1}S_{2}}\psi _{N}\right\Vert
_{L^{2}L_{x_{1}}^{\infty -}L_{x_{k+1}}^{\infty -}}N^{-\frac{1}{2}}\left\Vert
S_{1}S^{(k)}\psi _{N}\right\Vert _{L^{2}L_{x_{k+1}}^{\infty -}}
\end{eqnarray*}%
Use Sobolev and notice that $\left\Vert (\nabla V)_{N12}\right\Vert
_{L_{x_{1}}^{2+}}\sim N^{\beta +}$ in 2D, we have%
\begin{eqnarray}
\left\vert E_{12}\right\vert &\lesssim &N^{2\beta -\frac{3}{2}+}\left\Vert
S^{(k-1)}\psi _{N}\right\Vert _{L^{2}}N^{-\frac{1}{2}}\left\Vert
S_{1}S^{(k)}\psi _{N}\right\Vert _{L^{2}}  \label{E:1-10} \\
&&+N^{2\beta -\frac{3}{2}+}\left\Vert S^{(k)}\psi _{N}\right\Vert
_{L^{2}}N^{-\frac{1}{2}}\left\Vert S_{1}S^{(k)}\psi _{N}\right\Vert _{L^{2}}
\notag \\
&&+N^{2\beta -\frac{3}{2}+}\left\Vert S^{(k)}\psi _{N}\right\Vert
_{L^{2}}N^{-\frac{1}{2}}\left\Vert S_{1}S^{(k+1)}\psi _{N}\right\Vert
_{L^{2}}  \notag \\
&\lesssim &N^{2\beta -\frac{3}{2}+}\left( \left\Vert S^{(k)}\psi
_{N}\right\Vert _{L^{2}}^{2}+N^{-1}\left\Vert S_{1}S^{(k+1)}\psi
_{N}\right\Vert _{L^{2}}^{2}\right) .  \notag
\end{eqnarray}%
Now, for $E_{13}$, notice that $[S_{1},H_{12}]=N^{\beta }(\nabla V)_{N12}$,
writting out $\left( 2+H_{(k+1)(k+2)}\right) $ gives,%
\begin{eqnarray*}
E_{13} &=&2N^{\beta -2}\langle (\nabla V)_{N12}\frac{S^{(k)}}{S_{1}}\psi
_{N},S^{(k)}\psi _{N}\rangle \\
&&+N^{\beta -2}\langle (\nabla V)_{N12}\frac{S^{(k)}}{S_{1}}\psi
_{N},(S_{k+1}^{2}+S_{k+2}^{2})S^{(k)}\psi _{N}\rangle \\
&&+N^{\beta -2}\langle (\nabla V)_{N12}\frac{S^{(k)}}{S_{1}}\psi
_{N},V_{N(k+1)(k+2)}S^{(k)}\psi _{N}\rangle .
\end{eqnarray*}%
Thus%
\begin{eqnarray*}
&&\left\vert E_{13}\right\vert \\
&\lesssim &N^{\beta -2}\left\Vert (\nabla V)_{N12}\right\Vert
_{L_{x_{1}}^{2+}}\left\Vert \frac{S^{(k)}}{S_{1}}\psi _{N}\right\Vert
_{L^{2}L_{x_{1}}^{\infty -}}\left\Vert S^{(k)}\psi _{N}\right\Vert _{L^{2}}
\\
&&+N^{\beta -2}\left\Vert (\nabla V)_{N12}\right\Vert
_{L_{x_{1}}^{2+}}\left\Vert \frac{S^{(k+1)}}{S_{1}}\psi _{N}\right\Vert
_{L^{2}L_{x_{1}}^{\infty -}}\left\Vert S^{(k+1)}\psi _{N}\right\Vert _{L^{2}}
\\
&&+N^{\beta -2}\left\Vert (\nabla V)_{N12}\right\Vert
_{L_{x_{1}}^{2+}}\left\Vert V_{N(k+1)(k+2)}\right\Vert
_{L_{x_{k+1}}^{1+}}\left\Vert \frac{S^{(k)}}{S_{1}}\psi _{N}\right\Vert
_{L^{2}L_{x_{1}}^{\infty -}L_{x_{k+1}}^{\infty -}}\left\Vert S^{(k)}\psi
_{N}\right\Vert _{L^{2}L_{x_{k+1}}^{\infty -}}.
\end{eqnarray*}%
Hence, with the Sobolev estimates,%
\begin{eqnarray}
&&\left\vert E_{13}\right\vert  \label{E:1-11} \\
&\lesssim &N^{2\beta -2+}\left\Vert S^{(k)}\psi _{N}\right\Vert
_{L^{2}}\left\Vert S^{(k)}\psi _{N}\right\Vert _{L^{2}}+N^{2\beta
-2+}\left\Vert S^{(k+1)}\psi _{N}\right\Vert _{L^{2}}\left\Vert
S^{(k+1)}\psi _{N}\right\Vert _{L^{2}}  \notag \\
&&+N^{2\beta -2+}\left\Vert S^{(k+1)}\psi _{N}\right\Vert _{L^{2}}\left\Vert
S^{(k+1)}\psi _{N}\right\Vert _{L^{2}}  \notag \\
&\lesssim &N^{2\beta -2+}\left\Vert S^{(k+1)}\psi _{N}\right\Vert
_{L^{2}}^{2}.  \notag
\end{eqnarray}%
Hence, combining with (\ref{E:1-10}), we have acquired%
\begin{equation}
E_{1}\geqslant -CN^{2\beta -\frac{3}{2}+}\left( \left\Vert S^{(k+1)}\psi
_{N}\right\Vert _{L^{2}}^{2}+N^{-1}\left\Vert S_{1}S^{(k+1)}\psi
_{N}\right\Vert _{L^{2}}^{2}\right)  \label{E:1-12}
\end{equation}%
since $E_{11}\geqslant 0$.

Next, we deal with $E_{2}$. We remind the readers that 
\begin{equation*}
E_{2}=kN^{-2}\langle S^{(k)}\left( 2+H_{1(k+1)}\right) \psi
_{N},S^{(k)}\left( 2+H_{(k+1)(k+2)}\right) \psi _{N}\rangle .
\end{equation*}%
Commuting $\left( 2+H_{1(k+1)}\right) $ to the front, we write%
\begin{equation*}
E_{2}=E_{21}+E_{22}
\end{equation*}%
where%
\begin{eqnarray*}
E_{21} &=&N^{-2}\langle \left( 2+H_{1(k+1)}\right) S^{(k)}\psi
_{N},S^{(k)}\left( 2+H_{(k+1)(k+2)}\right) \psi _{N}\rangle , \\
E_{22} &=&N^{-2}\langle \lbrack S_{1},H_{1(k+1)}]\frac{S^{(k)}}{S_{1}}\psi
_{N},S^{(k)}\left( 2+H_{(k+1)(k+2)}\right) \psi _{N}\rangle .
\end{eqnarray*}

For $E_{21}$, expanding $2+H_{ij}$ yields%
\begin{equation*}
E_{21}=E_{211}+E_{212}+E_{213}+E_{214}
\end{equation*}%
where%
\begin{eqnarray*}
E_{211} &=&N^{-2}\langle \left( 2+S_{1}^{2}+S_{k+1}^{2}\right) S^{(k)}\psi
_{N},S^{(k)}\left( 2+S_{k+1}^{2}+S_{k+2}^{2}\right) \psi _{N}\rangle , \\
E_{212} &=&N^{-2}\langle \left( 2+S_{1}^{2}+S_{k+1}^{2}\right) S^{(k)}\psi
_{N},V_{N(k+1)(k+2)}S^{(k)}\psi _{N}\rangle , \\
E_{213} &=&N^{-2}\langle V_{N1(k+1)}S^{(k)}\psi _{N},S^{(k)}\left(
2+S_{k+1}^{2}+S_{k+2}^{2}\right) \psi _{N}\rangle , \\
E_{214} &=&N^{-2}\langle V_{N1(k+1)}S^{(k)}\psi
_{N},V_{N(k+1)(k+2)}S^{(k)}\psi _{N}\rangle .
\end{eqnarray*}%
Note that $E_{211}\geqslant 0$, so we can discard it. Expand $E_{212}$,%
\begin{eqnarray*}
E_{212} &=&2N^{-2}\langle S^{(k)}\psi _{N},V_{N(k+1)(k+2)}S^{(k)}\psi
_{N}\rangle \\
&&+N^{-2}\langle S_{1}S^{(k)}\psi _{N},V_{N(k+1)(k+2)}S_{1}S^{(k)}\psi
_{N}\rangle \\
&&+N^{\beta -2}\langle S^{(k+1)}\psi _{N},\left( \nabla V\right)
_{N(k+1)(k+2)}S^{(k)}\psi _{N}\rangle \\
&&+N^{-2}\langle S^{(k+1)}\psi _{N},V_{N(k+1)(k+2)}S^{(k+1)}\psi _{N}\rangle
\end{eqnarray*}%
Apply H\"{o}lder,%
\begin{eqnarray*}
&&\left\vert E_{212}\right\vert \\
&\lesssim &N^{-2}\left\Vert V_{N(k+1)(k+2)}\right\Vert
_{L_{x_{k+1}}^{1}}\left\Vert S^{(k)}\psi _{N}\right\Vert
_{L^{2}L_{x_{k+1}}^{\infty }}^{2} \\
&&+N^{-1}\left\Vert V_{N(k+1)(k+2)}\right\Vert
_{L_{x_{k+1}}^{1+}}N^{-1}\left\Vert S_{1}S^{(k)}\psi _{N}\right\Vert
_{L^{2}L_{x_{k+1}}^{\infty -}}^{2} \\
&&+N^{\beta -2}\left\Vert \left( \nabla V\right) _{N(k+1)(k+2)}\right\Vert
_{L_{x_{k+2}}^{1+}}\left\Vert S^{(k+1)}\psi _{N}\right\Vert
_{L^{2}L_{x_{k+2}}^{\infty -}}\left\Vert S^{(k)}\psi _{N}\right\Vert
_{L^{2}L_{x_{k+2}}^{\infty -}} \\
&&+N^{-2}\left\Vert V_{N(k+1)(k+2)}\right\Vert _{L_{x_{k+2}}^{1+}}\left\Vert
S^{(k+1)}\psi _{N}\right\Vert _{L^{2}L_{x_{k+2}}^{\infty -}}^{2}
\end{eqnarray*}%
With Sobolev, we see%
\begin{eqnarray}
\left\vert E_{212}\right\vert &\lesssim &N^{-2}\left\Vert S^{(k+2)}\psi
_{N}\right\Vert _{L^{2}}^{2}+N^{-1+}N^{-1}\left\Vert S_{1}S^{(k+1)}\psi
_{N}\right\Vert _{L^{2}}^{2}  \label{E:1-13} \\
&&+N^{\beta -2+}\left\Vert S^{(k+2)}\psi _{N}\right\Vert _{L^{2}}\left\Vert
S^{(k+1)}\psi _{N}\right\Vert _{L^{2}}  \notag \\
&&+N^{-2+}\left\Vert S^{(k+2)}\psi _{N}\right\Vert _{L^{2}}^{2}  \notag \\
&\lesssim &N^{-1+}\left( \left\Vert S^{(k+2)}\psi _{N}\right\Vert
_{L^{2}}^{2}+N^{-1}\left\Vert S_{1}S^{(k+1)}\psi _{N}\right\Vert
_{L^{2}}^{2}\right)  \notag
\end{eqnarray}%
where we used $\max (N^{\beta -2+},N^{-1+})=N^{-1+}$ for our problem in
which $\beta <1$.

For $E_{213}$, 
\begin{eqnarray*}
E_{213} &=&N^{-2}\langle V_{N1(k+1)}S^{(k)}\psi _{N},S^{(k)}\left(
2+S_{k+1}^{2}+S_{k+2}^{2}\right) \psi _{N}\rangle \\
&=&2N^{-2}\langle V_{N1(k+1)}S^{(k)}\psi _{N},S^{(k)}\psi _{N}\rangle
+N^{-2}\langle V_{N1(k+1)}S^{(k)}S_{k+2}\psi _{N},S^{(k)}S_{k+2}\psi
_{N}\rangle \\
&&+N^{\beta -2}\langle \left( \nabla V\right) _{N1(k+1)}S^{(k)}\psi
_{N},S^{(k+1)}\psi _{N}\rangle +N^{-2}\langle V_{N1(k+1)}S^{(k+1)}\psi
_{N},S^{(k+1)}\psi _{N}\rangle
\end{eqnarray*}%
Apply H\"{o}lder,%
\begin{eqnarray*}
\left\vert E_{213}\right\vert &\lesssim &N^{-2}\left\Vert
V_{N1(k+1)}\right\Vert _{L_{x_{k+1}}^{1+}}\left\Vert S^{(k)}\psi
_{N}\right\Vert _{L^{2}L_{x_{k+1}}^{\infty -}}^{2} \\
&&+N^{-2}\left\Vert V_{N1(k+1)}\right\Vert _{L_{x_{k+1}}^{1+}}\left\Vert
S_{k+2}S^{(k)}\psi _{N}\right\Vert _{L^{2}L_{x_{k+1}}^{\infty -}}^{2} \\
&&+N^{\beta -1}\left\Vert \left( \nabla V\right) _{N1(k+1)}\right\Vert
_{L_{x_{1}}^{1+}}N^{-1}\left\Vert S^{(k)}\psi _{N}\right\Vert
_{L^{2}L_{x_{1}}^{\infty -}}\left\Vert S^{(k+1)}\psi _{N}\right\Vert
_{L^{2}L_{x_{1}}^{\infty -}} \\
&&+N^{-2}\left\Vert \left( \nabla V\right) _{N1(k+1)}\right\Vert
_{L_{x_{1}}^{\infty }}\left\Vert S^{(k+1)}\psi _{N}\right\Vert _{L^{2}}^{2}.
\end{eqnarray*}%
Utilize Sobolev, 
\begin{eqnarray}
\left\vert E_{213}\right\vert &\lesssim &N^{-2+}\left\Vert S^{(k+1)}\psi
_{N}\right\Vert _{L^{2}}^{2}+N^{-2+}\left\Vert S^{(k+2)}\psi _{N}\right\Vert
_{L^{2}}^{2}  \label{E:1-14} \\
&&+N^{\beta -1+}N^{-1}\left\Vert S_{1}S^{(k)}\psi _{N}\right\Vert
_{L^{2}}\left\Vert S_{1}S^{(k+1)}\psi _{N}\right\Vert _{L^{2}}  \notag \\
&&+N^{2\beta -2}\left\Vert S^{(k+1)}\psi _{N}\right\Vert _{L^{2}}^{2}. 
\notag \\
&\lesssim &N^{\beta -1+}\left( N^{-1}\left\Vert S_{1}S^{(k+1)}\psi
_{N}\right\Vert _{L^{2}}^{2}+\left\Vert S^{(k+2)}\psi _{N}\right\Vert
_{L^{2}}^{2}\right)  \notag
\end{eqnarray}%
Then, for $E_{214}$%
\begin{eqnarray*}
\left\vert E_{214}\right\vert &=&\left\vert N^{-2}\langle
V_{N1(k+1)}S^{(k)}\psi _{N},V_{N(k+1)(k+2)}S^{(k)}\psi _{N}\rangle
\right\vert \\
&\lesssim &N^{-2}\left\Vert V_{N1(k+1)}\right\Vert
_{L_{x_{1}}^{1+}}\left\Vert V_{N(k+1)(k+2)}\right\Vert
_{L_{x_{k+2}}^{1+}}\left\Vert S^{(k)}\psi _{N}\right\Vert
_{L^{2}L_{x_{1}}^{\infty -}L_{x_{k+2}}^{\infty -}}^{2} \\
&\lesssim &N^{-1+}N^{-1}\left\Vert S_{1}S^{(k+1)}\psi _{N}\right\Vert
_{L^{2}}^{2}.
\end{eqnarray*}%
Together with (\ref{E:1-13})-(\ref{E:1-14}), we have the estimate for $%
E_{21},$%
\begin{equation}
E_{21}\geqslant -CN^{\beta -1+}\left( N^{-1}\left\Vert S_{1}S^{(k+1)}\psi
_{N}\right\Vert _{L^{2}}^{2}+\left\Vert S^{(k+2)}\psi _{N}\right\Vert
_{L^{2}}^{2}\right) ,  \label{E:1-15}
\end{equation}%
because $E_{211}\geqslant 0$.

We now turn to $E_{22}$ which is 
\begin{equation*}
E_{22}=N^{-2}\langle \lbrack S_{1},H_{1(k+1)}]\frac{S^{(k)}}{S_{1}}\psi
_{N},S^{(k)}\left( 2+H_{(k+1)(k+2)}\right) \psi _{N}\rangle .
\end{equation*}%
Substitute $[S_{1},H_{1(k+1)}]=N^{\beta }(\nabla V)_{N1(k+1)}$ and expand $%
2+H_{(k+1)(k+2)}$ to obtain%
\begin{equation*}
E_{22}=E_{221}+E_{221}+E_{223}
\end{equation*}%
where%
\begin{eqnarray*}
E_{221} &=&N^{\beta -2}\langle (\nabla V)_{N1(k+1)}\frac{S^{(k)}}{S_{1}}\psi
_{N},S^{(k)}S_{k+1}^{2}\psi _{N}\rangle \\
E_{222} &=&N^{\beta -2}\langle (\nabla V)_{N1(k+1)}\frac{S^{(k)}}{S_{1}}\psi
_{N},S^{(k)}S_{k+2}^{2}\psi _{N}\rangle \\
E_{223} &=&N^{\beta -2}\langle (\nabla V)_{N1(k+1)}\frac{S^{(k)}}{S_{1}}\psi
_{N},S^{(k)}V_{N(k+1)(k+2)}\psi _{N}\rangle
\end{eqnarray*}%
For $E_{221}$, we first H\"{o}lder at $x_{1}$ as follows:%
\begin{equation*}
\left\vert E_{221}\right\vert \lesssim N^{\beta -2}\Vert (\nabla
V)_{N1(k+1)}\Vert _{L_{x_{1}}^{2+}}\Vert \frac{S^{(k)}}{S_{1}}\psi _{N}\Vert
_{L^{2}L_{x_{1}}^{\infty -}}\Vert S^{(k+1)}S_{k+1}\psi _{N}\Vert _{L^{2}},
\end{equation*}%
then Soblev to obtain%
\begin{eqnarray}
\left\vert E_{221}\right\vert &\lesssim &N^{2\beta -\frac{3}{2}+}\Vert
S^{(k)}\psi _{N}\Vert _{L^{2}}N^{-\frac{1}{2}}\Vert S_{1}S^{(k+1)}\psi
_{N}\Vert _{L^{2}}  \label{E:1-16} \\
&\lesssim &N^{2\beta -\frac{3}{2}+}\left( \Vert S^{(k)}\psi _{N}\Vert
_{L^{2}}^{2}+N^{-1}\Vert S_{1}S^{(k+1)}\psi _{N}\Vert _{L^{2}}^{2}\right) . 
\notag
\end{eqnarray}%
Use H\"{o}lder in $x_{k+1}$ for $E_{222}$, we get%
\begin{eqnarray}
\left\vert E_{222}\right\vert &\lesssim &N^{\beta -2}\Vert (\nabla
V)_{N1(k+1)}\Vert _{L_{x_{k+1}}^{1+}}\Vert \frac{S^{(k)}}{S_{1}}S_{k+2}\psi
_{N}\Vert _{L^{2}L_{x_{k+1}}^{\infty -}}\Vert S^{(k)}S_{k+2}\psi _{N}\Vert
_{L^{2}L_{x_{k+1}}^{\infty -}}  \label{E:1-17} \\
&\lesssim &N^{\beta -2+}\Vert S^{(k+2)}\psi _{N}\Vert _{L^{2}}^{2}.  \notag
\end{eqnarray}%
Then, argue in the same way for $E_{223}$,%
\begin{eqnarray*}
\left\vert E_{223}\right\vert &\lesssim &N^{\beta -2}\Vert (\nabla
V)_{N1(k+1)}\Vert _{L_{x_{1}}^{1+}}\left\Vert V_{N(k+1)(k+2)}\right\Vert
_{L_{x_{k+2}}^{1+}} \\
&&\times \Vert \frac{S^{(k)}}{S_{1}}\psi _{N}\Vert _{L^{2}L_{x_{1}}^{\infty
-}L_{x_{k+2}}^{\infty -}}\Vert S^{(k)}\psi _{N}\Vert
_{L^{2}L_{x_{1}}^{\infty -}L_{x_{k+2}}^{\infty -}} \\
&\lesssim &N^{\beta -\frac{3}{2}+}\Vert S^{(k+1)}\psi _{N}\Vert _{L^{2}}N^{-%
\frac{1}{2}}\Vert S_{1}S^{(k+1)}\psi _{N}\Vert _{L^{2}} \\
&\lesssim &N^{\beta -\frac{3}{2}+}\left( \Vert S^{(k+1)}\psi _{N}\Vert
_{L^{2}}^{2}+N^{-1}\Vert S_{1}S^{(k+1)}\psi _{N}\Vert _{L^{2}}^{2}\right)
\end{eqnarray*}%
Together with (\ref{E:1-16}) and (\ref{E:1-17}), we have the estimate for $%
E_{22}$,%
\begin{equation}
\left\vert E_{22}\right\vert \lesssim N^{2\beta -\frac{3}{2}+}\left( \Vert
S^{(k)}\psi _{N}\Vert _{L^{2}}^{2}+N^{-1}\Vert S_{1}S^{(k+1)}\psi _{N}\Vert
_{L^{2}}^{2}\right) .  \label{E:1-18}
\end{equation}%
This completes the treatment of $E_{2}$. Specifically, (\ref{E:1-15}) and (%
\ref{E:1-18}) give 
\begin{equation}
E_{2}\geqslant -C\max (N^{2\beta -\frac{3}{2}+},N^{\beta -1+})\left( \Vert
S^{(k)}\psi _{N}\Vert _{L^{2}}^{2}+N^{-1}\Vert S_{1}S^{(k+1)}\psi _{N}\Vert
_{L^{2}}^{2}\right) .  \label{E:1-19}
\end{equation}

Finally, we treat $E_{3}$ which is 
\begin{equation*}
E_{3}=N^{-1}\langle S^{(k)}\left( 2+H_{1(k+1)}\right) \psi
_{N},S^{(k)}\left( 2+H_{(k+2)(k+3)}\right) \psi _{N}\rangle .
\end{equation*}%
Commute $\left( 2+H_{1(k+1)}\right) $ and $S^{(k)}$,%
\begin{equation*}
E_{3}=E_{31}+E_{32},
\end{equation*}%
where%
\begin{eqnarray*}
E_{31} &=&N^{-1}\langle \left( 2+H_{1(k+1)}\right) S^{(k)}\psi
_{N},S^{(k)}\left( 2+H_{(k+2)(k+3)}\right) \psi _{N}\rangle , \\
E_{32} &=&N^{-1}\langle \left[ S_{1},H_{1(k+1)}\right] \frac{S^{(k)}}{S_{1}}%
\psi _{N},S^{(k)}\left( 2+H_{(k+2)(k+3)}\right) \psi _{N}\rangle .
\end{eqnarray*}%
We first discard $E_{31}$ because $E_{31}\geqslant 0$ by Theorem \ref%
{thm:pre 1 k=1 energy} and Lemma \ref{L:commuteop}. For $E_{32}$, we plug in 
$\left[ S_{1},H_{1(k+1)}\right] =N^{\beta }(\nabla V)_{N1(k+1)}$ and expand $%
\left( 2+H_{(k+2)(k+3)}\right) $ to obtain%
\begin{eqnarray*}
E_{32} &=&N^{\beta -1}\langle (\nabla V)_{N1(k+1)}\frac{S^{(k)}}{S_{1}}\psi
_{N},S^{(k)}\left( 2+S_{k+2}^{2}+S_{k+3}^{2}\right) \psi _{N}\rangle \\
&&+N^{\beta -1}\langle (\nabla V)_{N1(k+1)}\frac{S^{(k)}}{S_{1}}\psi
_{N},S^{(k)}V_{N(k+2)(k+3)}\psi _{N}\rangle \\
&=&2N^{\beta -1}\langle (\nabla V)_{N1(k+1)}\frac{S^{(k)}}{S_{1}}\psi
_{N},S^{(k)}\psi _{N}\rangle \\
&&+2N^{\beta -1}\langle (\nabla V)_{N1(k+1)}\frac{S^{(k)}}{S_{1}}S_{k+2}\psi
_{N},S^{(k)}S_{k+2}\psi _{N}\rangle \\
&&+N^{\beta -1}\langle (\nabla V)_{N1(k+1)}\frac{S^{(k)}}{S_{1}}\psi
_{N},S^{(k)}V_{N(k+2)(k+3)}\psi _{N}\rangle .
\end{eqnarray*}%
First H\"{o}lder again%
\begin{eqnarray*}
&&\left\vert E_{32}\right\vert \\
&\lesssim &N^{\beta -1}\left\Vert (\nabla V)_{N1(k+1)}\right\Vert
_{L_{x_{k+1}}^{1+}}\Vert \frac{S^{(k)}}{S_{1}}\psi _{N}\Vert
_{L^{2}L_{x_{k+1}}^{\infty -}}\Vert S^{(k)}\psi _{N}\Vert
_{L^{2}L_{x_{k+1}}^{\infty -}} \\
&&+N^{\beta -1}\left\Vert (\nabla V)_{N1(k+1)}\right\Vert
_{L_{x_{k+1}}^{1+}}\Vert \frac{S^{(k)}}{S_{1}}S_{k+2}\psi _{N}\Vert
_{L^{2}L_{x_{k+1}}^{\infty -}}\Vert S^{(k)}S_{k+2}\psi _{N}\Vert
_{L^{2}L_{x_{k+1}}^{\infty -}} \\
&&+N^{\beta -1}\left\Vert (\nabla V)_{N1(k+1)}\right\Vert
_{L_{x_{k+1}}^{1+}}\left\Vert V_{N(k+2)(k+3)}\right\Vert
_{L_{x_{k+2}}^{1+}}\Vert \frac{S^{(k)}}{S_{1}}\psi _{N}\Vert
_{L^{2}L_{x_{k+1}}^{\infty -}L_{x_{k+2}}^{\infty -}}\Vert S^{(k)}\psi
_{N}\Vert _{L^{2}L_{x_{k+1}}^{\infty -}L_{x_{k+2}}^{\infty -}},
\end{eqnarray*}%
then Sobolev gives%
\begin{eqnarray*}
\left\vert E_{32}\right\vert &\lesssim &N^{\beta -1+}\Vert S^{(k)}\psi
_{N}\Vert _{L^{2}}\Vert S^{(k+1)}\psi _{N}\Vert _{L^{2}}+N^{\beta -1+}\Vert
S^{(k+1)}\psi _{N}\Vert _{L^{2}}\Vert S^{(k+2)}\psi _{N}\Vert _{L^{2}} \\
&&+N^{\beta -1+}\Vert S^{(k+1)}\psi _{N}\Vert _{L^{2}}\Vert S^{(k+2)}\psi
_{N}\Vert _{L^{2}} \\
&\lesssim &N^{\beta -1+}\Vert S^{(k+2)}\psi _{N}\Vert _{L^{2}}^{2}\text{.}
\end{eqnarray*}%
That is%
\begin{equation}
E_{3}\geqslant -CN^{\beta -1+}\Vert S^{(k+2)}\psi _{N}\Vert _{L^{2}}^{2}.
\label{E:1-20}
\end{equation}%
Putting (\ref{E:1-12}), (\ref{E:1-19}) and (\ref{E:1-20}) in one line, we
obtain the estimate for the cross error term%
\begin{equation*}
E_{C}\geqslant -C\max (N^{2\beta -\frac{3}{2}+},N^{\beta -1+})\left( \Vert
S^{(k+2)}\psi _{N}\Vert _{L^{2}}^{2}+N^{-1}\Vert S_{1}S^{(k+1)}\psi
_{N}\Vert _{L^{2}}^{2}\right) ,
\end{equation*}%
which is exactly (\ref{estimate:end for error}).

Finally, combining (\ref{E:1-8})and (\ref{estimate:end for error}), we have%
\begin{eqnarray*}
&&\frac{1}{c_{0}^{k+2}}\langle \psi _{N},(N^{-1}H_{N}+1)^{k+2}\psi
_{N}\rangle \\
&\geqslant &\left( 2-C\max (N^{2\beta -\frac{3}{2}+},N^{\beta -1+})\right)
\left( \Vert S^{(k+2)}\psi \Vert _{L^{2}}^{2}+N^{-1}\Vert S_{1}S^{(k+1)}\psi
\Vert _{L^{2}}^{2}\right) \\
&\geqslant &\Vert S^{(k+2)}\psi \Vert _{L^{2}}^{2}+N^{-1}\Vert
S_{1}S^{(k+1)}\psi \Vert _{L^{2}}^{2}
\end{eqnarray*}%
for $N$ larger than some threshold, as originally claimed. Whence, we have
proved (\ref{eq:main energy estimate}) for all $k$ and established Theorem %
\ref{thm:main energy}.

\subsection{Remark on higher $\protect\beta \label{sec:high beta remark}$}

It is easy to see from \S \ref{Sec:EnergyEstimate:k=k+2} that Theorem \ref%
{thm:main energy} will hold up to $\beta <3/4$ as long as Theorem \ref%
{thm:k=1 energy} works for higher $\beta $. It is certainly of mathematical
and physical interest to push for a higher $\beta $ in Theroem \ref{thm:main
energy}. On the one hand, higher $\beta $ makes the convergence $%
V_{N}\rightarrow -b_{0}\delta $ as $N\rightarrow \infty $ faster and hence
is more singular, difficult, and interesting to deal with. On the other
hand, larger $\beta $ means stronger and more localized interaction.

Examing the proof of Theorem \ref{thm:k=1 energy}, one immediately notice
the obstacles lie in Lemmas \ref{Lem:Lewin1} and \ref{Lem:QdF}. While it is
extremely difficult to improve Lemma \ref{Lem:QdF}, one would certainly
wonder how to improve the crude estimate, Lemma \ref{Lem:Lewin1}. However,
it turns out that the crude estimate is actually optimal in the sense that
it fails if $M\leqslant C\frac{N^{\beta -\delta }}{\varepsilon }$ for some $%
\delta >0$. (See Lemma \ref{L:trace-fail} below.) Thus, there is no obvious
way to improve the current result and reach a higher $\beta $.\footnote{%
One month after the posting of this article, Lewin, Nam, \& Rougerie noticed
the authors that a finer usage of Lemma \ref{Lem:QdF} could improve Theorem %
\ref{thm:k=1 energy} to $\beta <3/4$ in a friendly message, and posted a
note \cite{LewinFocusing2} about it.}

\begin{lemma}
\label{L:trace-fail} Suppose that $V\in \mathcal{S}(\mathbb{R}^2)$ with $%
\hat V(\xi) = 1$ for $|\xi|\leq 4$. Suppose that $M_j=M_j(N)$, $j=1,2$ are
dyads with $0\leq M_1\leq M_2 \leq N^\beta$ and $\lim_{N\to \infty} \frac{M_2%
}{M_1} = \infty$. There does not exist a constant $C$ independent of $N$
such that the following estimate holds: for all symmetric $\psi(x_1,x_2)$, 
\begin{equation}  \label{E:trace1}
\int |V_N(x_1-x_2)| |P_{M_1 \leq \bullet \leq M_2}^{(2)}\psi(x_1,x_2)|^2 \,
dx_1 \, dx_2 \leq C \| \nabla_1 \psi \|_{L^2}^2
\end{equation}
\end{lemma}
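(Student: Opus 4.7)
The plan is to exhibit an explicit sequence of symmetric test functions $\psi_N$ for which the ratio of the two sides blows up as $N\to\infty$, thereby ruling out any uniform constant $C$. The underlying mechanism is the logarithmic failure of the trace theorem in 2D on the codimension-$2$ diagonal $\{x_1=x_2\}\subset\mathbb{R}^4$; this failure becomes accessible precisely because the projection allows the frequency of $\psi$ to spread over the annulus $[M_1,M_2]$ with $M_2/M_1\to\infty$.

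First I would construct the key building block. Let $f_N:\mathbb{R}^2\to\mathbb{R}$ be the radial function defined by
\begin{equation*}
\widehat{f_N}(\xi)=c_N\,|\xi|^{-2}\,\chi_{\{M_1\leq|\xi|\leq M_2\}}(\xi),
\end{equation*}
with $c_N$ chosen so that $\|f_N\|_{L^2}=1$. This choice saturates the 2D Cauchy--Schwarz bound
\begin{equation*}
|f_N(0)|^2\lesssim \|\nabla f_N\|_{L^2}^2\cdot\int_{M_1\le|\xi|\le M_2}|\xi|^{-2}\,d\xi,
\end{equation*}
and a polar computation gives $c_N^2\sim M_1^2$ together with
\begin{equation*}
\|\nabla f_N\|_{L^2}^2\sim M_1^2\log(M_2/M_1),\qquad |f_N(0)|^2\sim M_1^2\bigl(\log(M_2/M_1)\bigr)^2,
\end{equation*}
so $|f_N(0)|^2/\|\nabla f_N\|_{L^2}^2\sim \log(M_2/M_1)\to\infty$. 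I would then fix any Schwartz $g:\mathbb{R}^2\to\mathbb{R}$ with $\|g\|_{L^2}=1$ and $\widehat g$ compactly supported in the unit ball, and set $\psi_N(x_1,x_2)=f_N(x_1-x_2)g(x_1+x_2)$, which is symmetric under $x_1\leftrightarrow x_2$ because $f_N$ is even.

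Next I would evaluate the two sides. The change of variables $(u,v)=(x_1-x_2,x_1+x_2)$ gives
\begin{equation*}
\int V_N(x_1-x_2)|\psi_N|^2\,dx_1\,dx_2=\tfrac{1}{4}\,\|g\|_{L^2}^2\int V_N(u)|f_N(u)|^2\,du.
\end{equation*}
Since $\widehat{|f_N|^2}$ is supported in $\{|\zeta|\le 2M_2\}\subset\{|\zeta|\le 2N^\beta\}$, and by hypothesis $\widehat{V_N}(\zeta)=\widehat V(\zeta/N^\beta)=1$ on $\{|\zeta|\le 4N^\beta\}$, Plancherel yields the exact identity $\int V_N(u)|f_N(u)|^2\,du=|f_N(0)|^2$, and the triangle inequality then gives
\begin{equation*}
\int|V_N(x_1-x_2)|\,|\psi_N|^2\,dx_1\,dx_2\;\geq\;\Bigl|\int V_N(x_1-x_2)|\psi_N|^2\,dx_1\,dx_2\Bigr|\;\gtrsim\; M_1^2\bigl(\log(M_2/M_1)\bigr)^2.
\end{equation*}
On the other side, Leibniz and the normalizations give $\|\nabla_1\psi_N\|_{L^2}^2\lesssim \|\nabla f_N\|_{L^2}^2+\|\nabla g\|_{L^2}^2\lesssim M_1^2\log(M_2/M_1)$, so the ratio is $\gtrsim\log(M_2/M_1)\to\infty$.

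The remaining step will be to accommodate the projector $P^{(2)}_{M_1\leq\bullet\leq M_2}$ on the left-hand side. Since $\widehat{\psi_N}(\xi_1,\xi_2)=\tfrac{1}{4}\widehat{f_N}((\xi_1-\xi_2)/2)\widehat g((\xi_1+\xi_2)/2)$ is supported in $|\xi_j|\in[M_1-1,M_2+1]$ and $\psi_N$ is spatially localized on the fixed scale of $g$, semiclassical estimates for the 2D harmonic oscillator (or equivalently a Mehler-kernel computation) show $P^{(2)}_{M_1\leq\bullet\leq M_2}\psi_N=\psi_N+o(1)$ in $L^2$, after replacing the annulus in the definition of $\widehat{f_N}$ with a slightly thinner one such as $[2M_1,M_2/2]$, which remains nonempty because $M_2/M_1\to\infty$. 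This approximation step will be the main technical obstacle, since the Hermite spectrum differs from the Fourier spectrum by lower-order corrections involving the confining potential $\omega^2|x|^2$; controlling these corrections is however routine because $g$ is a fixed Schwartz function with compact spectral support. Once the approximation is in hand, the construction produces the claimed unbounded ratio and establishes the lemma.
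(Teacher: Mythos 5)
Your route is genuinely different from the paper's: the paper first rescales $x_j\mapsto x_j/(M_1M_2)^{1/2}$ to normalize to the regime $M_1\to 0$, $M_2\to\infty$, uses the same Fourier--support observation to replace $|V_N(x_1-x_2)|$ by $\delta(x_1-x_2)$, and then invokes Fatou to reduce to the classical failure of the 2D endpoint trace estimate, witnessed by $\ln(-\ln|x_1-x_2|)$. You instead exhibit an explicit near-extremizer, and the core computations ($c_N^2\sim M_1^2$, $\|\nabla f_N\|_{L^2}^2\sim M_1^2\log(M_2/M_1)$, $|f_N(0)|^2\sim M_1^2(\log(M_2/M_1))^2$, and the Plancherel identity $\int V_N|f_N|^2=|f_N(0)|^2$) are all correct. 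Two gaps remain, however.

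First, the bound $\|\nabla_1\psi_N\|_{L^2}^2\lesssim M_1^2\log(M_2/M_1)$ is not valid in general: the Leibniz term $\|f_N\|_{L^2}^2\|\nabla g\|_{L^2}^2$ is a fixed positive constant, and the hypotheses permit $M_1^2\log(M_2/M_1)\to 0$ (for instance $M_1=2^{-N}$, $M_2=1$), in which case your ratio tends to $0$ rather than $\infty$ and the construction proves nothing. The missing ingredient is a normalization: either perform the paper's rescaling first, or concentrate $g$ at spatial scale $M_1^{-1}$, i.e.\ replace $g$ by $g_\lambda(v)=\lambda g(\lambda v)$ with $\lambda\lesssim M_1$, so that $\|\nabla g_\lambda\|_{L^2}^2\sim M_1^2$ and the ratio is again $\gtrsim\log(M_2/M_1)$ (with a separate artifice when $M_1=0$). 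As written, your argument establishes the lemma only when $\liminf M_1^2\log(M_2/M_1)>0$.

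Second, the projector step you defer is simultaneously easier than you fear and not actually carried out. The paper's own proof treats $P_{M_1\le\bullet\le M_2}$ as a sharp Fourier multiplier (both the rescaling step and the assertion that the frequency restrictions force $|2\eta|\le 2N^{\beta}$ only make sense in that reading), so after shrinking the annulus of $\widehat{f_N}$ and the support of $\widehat{g}$ so that $|\xi_j|\in[M_1,M_2]$ on the support of $\widehat{\psi_N}$, one has $P^{(2)}_{M_1\le\bullet\le M_2}\psi_N=\psi_N$ exactly and no semiclassical analysis is needed. If you instead insist on reading $P$ as the Hermite spectral projector $\chi_{[M_1,M_2]}(S_j)$, then $P^{(2)}\psi_N=\psi_N+o(1)$ in $L^2$ is a substantive unproved claim; worse, $L^2$-smallness of the error $e_N$ is insufficient, since the left side involves $\int|V_N|\,|e_N|^2\le \|V\|_{L^\infty}N^{2\beta}\|e_N\|_{L^2}^2$, which can swamp the main term $\sim M_1^2(\log(M_2/M_1))^2$ unless $\|e_N\|_{L^2}$ is small at a quantified rate. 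So the proposal, as it stands, proves the lemma only in a restricted parameter regime and modulo the deferred step.
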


Before proceeding with the proof, we make a few remarks. First, the
assumption $\hat{V}(\xi )=1$ for $|\xi |\leq 4$ can be eliminated, but we
add it since it simplifies the proof and still covers a wide class of
Schwartz class potentials. Second, we note the estimate \eqref{E:trace1} is
in fact true when $M_{2}/M_{1}$ remains bounded as $N\rightarrow \infty $.
This follows readily from scaling and the Bernstein inequality: if $M$ is a
single dyadic interval, then $\Vert P_{M}\phi \Vert _{L^{\infty }}\leq
M\Vert P_{M}\phi \Vert _{L^{2}}$. Moreover, the core of Lemma \ref%
{Lem:Lewin1} is effectively the estimate%
\begin{equation}
\int |V_{N}(x_{1}-x_{2})||P_{>M_{1}}^{(2)}\psi
(x_{1},x_{2})|^{2}\,dx_{1}\,dx_{2}\leq C\Vert \nabla _{1}\psi \Vert
_{L^{2}}^{2}\text{ for }M_{1}\geqslant N^{\beta }\text{.}
\label{eq:core of Lewin1}
\end{equation}%
Lemma \ref{L:trace-fail} shows that Lemma \ref{Lem:Lewin1} cannot be
improved in the sense that one cannot select $M_{2}=N^{\beta }$ and $%
M_{1}\ll N^{\beta }$ (for example $M_{1}=N^{\beta -\delta }$ for any $\delta
>0$) and expect (\ref{eq:core of Lewin1}) to hold.

\begin{proof}
Replacing $x_j$ by $\frac{x_j}{M_1^{1/2}M_2^{1/2}}$ and $\tilde N = \frac{N}{%
M_1^{1/2\beta} M_2^{1/2\beta}}$, we obtain that the estimate \eqref{E:trace1}
is equivalent to 
\begin{equation*}
\int |V_{\tilde N}(x_1-x_2)| P_{\left(\frac{M_1}{M_2}\right)^{1/2} \leq
\bullet \leq \left( \frac{M_2}{M_1} \right)^{1/2}}^{(2)} \psi(x_1,x_2)|^2 \,
dx_1\, dx_2 \leq C \|\nabla_1 \psi \|_{L^2}^2
\end{equation*}
Notice that $M_2 \leq N^\beta$ implies $\left(\frac{M_2}{M_1}\right)^{1/2}
\leq \tilde N^\beta$ and $\lim_{N\to \infty} \frac{M_2}{M_1} = \infty$
implies that $\lim_{N\to\infty} \left( \frac{M_2}{M_1}\right)^{1/2} = \infty$
and hence $\lim_{N\to \infty} \tilde N = \infty$.

Thus, it suffices to assume that in \eqref{E:trace1}, we in fact have $%
\lim_{N\to \infty} M_1 = 0$ and $\lim_{N \to \infty} M_2 = \infty$.

For any functions $W$, $\psi_1$, $\psi_2$, consider 
\begin{align*}
I &\overset{\mathrm{def}}{=} \int_{x_1,x_2} W(x_1-x_2) \psi_1(x_1,x_2) 
\overline{\psi_2(x_1,x_2)} \, dx_1 \, dx_2 \\
& = \int_{x_1,x_2,\eta,\xi_1,\xi_2} e^{i(x_1-x_2)\eta} e^{ix_1\xi_1}
e^{ix_2\xi_2} \hat W(\eta) \hat \psi_1 (\xi_1,\xi_2) \overline{%
\psi_2(x_1,x_2)} \, dx_1 \, dx_2 \\
& = \int_{\eta,\xi_1,\xi_2} \hat W(\eta) \psi_1(\xi_1,\xi_2) \overline{
\int_{x_1,x_2} e^{-ix_1(\xi_1+\eta)} e^{-ix_2(\xi_2-\eta)} \psi_2(x_1,x_2)
\, dx_1\, dx_2 } \, d\eta \, d\xi_1 \, dx_2 \\
& = \int_{\eta,\xi_1,\xi_2} \hat W(\eta) \psi_1(\xi_1,\xi_2) \overline{%
\psi_2(\xi_1+\eta,\xi_2-\eta)} \, d\eta \, d\xi_1 \, d\xi_2 \\
& = \int_{\eta,\xi_1,\xi_2} \hat W(\eta) \hat \psi_1(\xi_1 - \frac{\eta}{2},
\xi_2+ \frac{\eta}{2}) \overline{ \hat \psi_2( \xi_1 + \frac{\eta}{2}, \xi_2
- \frac{\eta}{2})} \, d\xi_1 d\xi_2 d\eta \\
& = \int_{\eta,\xi_1,\xi_2} \hat W(2\eta) \hat \psi_1(\xi_1 - \eta, \xi_2+
\eta) \overline{ \hat \psi_2( \xi_1 + \eta, \xi_2 - \eta)} \, d\xi_1 d\xi_2
d\eta
\end{align*}

Let 
\begin{equation*}
J_V \overset{\mathrm{def}}{=} \int |V_N(x_1-x_2)| |P_{M_1 \leq \bullet \leq
M_2}\psi (x_1,x_2)|^2 \, dx_1 \, dx_2
\end{equation*}
and 
\begin{align*}
J_\delta &\overset{\mathrm{def}}{=} \int \delta(x_1-x_2) |P_{M_1 \leq
\bullet \leq M_2}\psi (x_1,x_2)|^2 \, dx_1 \, dx_2 \\
& = \int |P_{M_1\leq \bullet \leq M_2} \psi(x,x)|^2 \, dx
\end{align*}

We show that $J_V = J_\delta$. To obtain $I=J_V-J_\delta$, in the expression
for $I$, we take $W = V_N - \delta$ and $\psi_j = P_{M_1\leq \bullet \leq
M_2} \psi$. Then 
\begin{equation*}
\hat W(2\eta) = \hat V( \frac{2\eta}{N^\beta}) - 1
\end{equation*}
so $\hat W(2\eta)=0$ for $|\eta|\leq 2N^\beta$. On the other hand, the
frequency restrictions on $\psi_j$ imply that $|\xi_1-\eta| \leq M_2 \leq
N^\beta$ and $|\xi_1+\eta| \leq M_2 \leq N^\beta$. It follows that 
\begin{equation*}
|2\eta| = |(\xi_1+\eta) - (\xi_1-\eta)| \leq |\xi_1+\eta| + |\xi_1-\eta|
\leq 2N^\beta
\end{equation*}
Consequently $I=0$, completing the proof of the claim.

We argue by contradiction assuming that \eqref{E:trace1} holds with $C$
independent of $N$. Since $J_V=J_\delta$, 
\begin{equation*}
J_\delta = \int |P_{M_1\leq \bullet \leq M_2} \psi(x,x)|^2 \,dx \leq C
\|\nabla_{x_1}\psi\|_{L^2}^2
\end{equation*}
with a constant $C$ independent of $N$, where $M_1 \to 0$ and $M_2\to \infty$
as $N\to \infty$. By Fatou's lemma, 
\begin{equation}  \label{E:trace2}
\int |\psi(x,x)|^2 \, dx \leq C \|\nabla_{x_1} \psi \|_{L^2}^2
\end{equation}
which is the (false) 2D endpoint trace estimate. A counterexample can be
constructed as follows. Let $\chi$ be a smooth function with $%
\chi(-x)=\chi(x)$, $\chi(x)=1$ for $|x|\leq \frac14$ and $\chi(x) = 0$ for $%
|x|\geq \frac12$. Then 
\begin{equation*}
\psi(x_1,x_2) = \chi(x_1-x_2)\chi(x_1)\chi(x_2) \, \ln (-\ln |x_1-x_2|)
\end{equation*}
is a symmetric function for which the left side of \eqref{E:trace2} is
infinite but the right side is finite.

More properly written, we can introduce a smooth function 
\begin{equation*}
\psi_\epsilon (x_1,x_2) = \chi(x_1-x_2)\chi(x_1)\chi(x_2) \, \ln
(-\ln(|x_1-x_2|+\epsilon))
\end{equation*}
Then 
\begin{equation*}
\int |\psi_\epsilon(x,x)|^2 \, dx \sim \ln \ln \epsilon^{-1}
\end{equation*}
while $\|\nabla_{x_1}\psi_\epsilon \|_{L^2}$ is bounded independently of $%
\epsilon$ as $\epsilon \to 0$. Sending $\epsilon\to 0$ shows that any choice
of $C$ in \eqref{E:trace2} can be beat, giving us the contradiction.
\end{proof}

\section{Derivation of the 2D Focusing NLS\label{Sec:Derivation}}

\subsection{Proof of Theorem \protect\ref{THM:Main Theorem}\label%
{Section:Compactness and convergence}}

We start by introducing an appropriate topology on the density matrices as
was previously done in \cite{E-E-S-Y1, E-Y1, E-S-Y1,E-S-Y2,E-S-Y5,
E-S-Y3,Kirpatrick,TChenAndNP,ChenAnisotropic,Chen3DDerivation,C-H3Dto2D,C-H2/3}%
. Denote the spaces of compact operators and trace class operators on $%
L^{2}\left( \mathbb{R}^{2k}\right) $ as $\mathcal{K}_{k}$ and $\mathcal{L}%
_{k}^{1}$, respectively. Then $\left( \mathcal{K}_{k}\right) ^{\prime }=%
\mathcal{L}_{k}^{1}$. By the fact that $\mathcal{K}_{k}$ is separable, we
select a dense countable subset $\{J_{i}^{(k)}\}_{i\geqslant 1}\subset 
\mathcal{K}_{k}$ in the unit ball of $\mathcal{K}_{k}$ (so $\Vert
J_{i}^{(k)}\Vert _{\func{op}}\leqslant 1$ where $\left\Vert \cdot
\right\Vert _{\func{op}}$ is the operator norm). For $\gamma ^{(k)},\tilde{%
\gamma}^{(k)}\in \mathcal{L}_{k}^{1}$, we then define a metric $d_{k}$ on $%
\mathcal{L}_{k}^{1}$ by 
\begin{equation*}
d_{k}(\gamma ^{(k)},\tilde{\gamma}^{(k)})=\sum_{i=1}^{\infty
}2^{-i}\left\vert \limfunc{Tr}J_{i}^{(k)}\left( \gamma ^{(k)}-\tilde{\gamma}%
^{(k)}\right) \right\vert .
\end{equation*}%
A uniformly bounded sequence $\gamma _{N}^{(k)}\in \mathcal{L}_{k}^{1}$
converges to $\gamma ^{(k)}\in \mathcal{L}_{k}^{1}$ with respect to the
weak* topology if and only if 
\begin{equation*}
\lim_{N\rightarrow \infty }d_{k}(\gamma _{N}^{(k)},\gamma ^{(k)})=0.
\end{equation*}%
For fixed $T>0$, let $C\left( \left[ 0,T\right] ,\mathcal{L}_{k}^{1}\right) $
be the space of functions of $t\in \left[ 0,T\right] $ with values in $%
\mathcal{L}_{k}^{1}$ which are continuous with respect to the metric $d_{k}.$
On $C\left( \left[ 0,T\right] ,\mathcal{L}_{k}^{1}\right) ,$ we define the
metric 
\begin{equation*}
\hat{d}_{k}(\gamma ^{(k)}\left( \cdot \right) ,\tilde{\gamma}^{(k)}\left(
\cdot \right) )=\sup_{t\in \left[ 0,T\right] }d_{k}(\gamma ^{(k)}\left(
t\right) ,\tilde{\gamma}^{(k)}\left( t\right) ),
\end{equation*}%
and denote by $\tau _{prod}$ the topology on the space $\oplus _{k\geqslant
1}C\left( \left[ 0,T\right] ,\mathcal{L}_{k}^{1}\right) $ given by the
product of topologies generated by the metrics $\hat{d}_{k}$ on $C\left( %
\left[ 0,T\right] ,\mathcal{L}_{k}^{1}\right) $.

By Theorem \ref{thm:main energy}, we have, $\forall k=0,1,...,$ 
\begin{eqnarray*}
\limfunc{Tr}S^{(k)}\gamma _{N}^{(k)}(t)S^{(k)} &=&\left\Vert S^{(k)}\psi
_{N}(t)\right\Vert _{L^{2}}^{2} \\
&\leqslant &C^{k}\left\langle \psi _{N}(t),\left( N^{-1}H_{N}+1\right)
^{k}\psi _{N}(t)\right\rangle \\
&=&C^{k}\left\langle \psi _{N}(0),\left( N^{-1}H_{N}+1\right) ^{k}\psi
_{N}(0)\right\rangle \\
&=&\sum_{j=0}^{k}%
\begin{pmatrix}
k \\ 
j%
\end{pmatrix}%
\langle \psi _{N}(0),\frac{1}{N^{k-j}}H_{N}^{k-j}\psi _{N}(0)\rangle \\
&\leqslant &\sum_{j=0}^{k}%
\begin{pmatrix}
k \\ 
j%
\end{pmatrix}%
1^{j}C^{k-j} \\
&\leqslant &C^{k}
\end{eqnarray*}%
provided that $N\geqslant N_{0}(k)$. That is the energy estimate:%
\begin{equation}
\sup_{t}\limfunc{Tr}S^{(k)}\gamma _{N}^{(k)}(t)S^{(k)}\leqslant C^{k}.
\label{EnergyBound:BBGKY}
\end{equation}%
With estimate (\ref{EnergyBound:BBGKY}), one can go through Lemmas \ref%
{Theorem:CompactnessOfBBGKY}, \ref{THM:convergence to GP}, and \ref%
{THM:uniqueness of GP} to conclude that, as trace class operators:%
\begin{equation*}
\gamma _{N}^{(k)}(t)\rightharpoonup \left\vert \phi (t)\right\rangle
\left\langle \phi (t)\right\vert ^{\otimes k}\text{ weak*.}
\end{equation*}%
By the argument on \cite[p.398-399]{Chen3DDerivation}\footnote{%
The proof \cite[p.398-399]{Chen3DDerivation} is actually for more general
datum.}, we can upgrade the above weak* convergence to strong and hence
finish the proof of Theorem \ref{THM:Main Theorem}.

\begin{lemma}[Compactness]
\label{Theorem:CompactnessOfBBGKY}For all finite $T>0$, the sequence 
\begin{equation*}
\left\{ \Gamma _{N}(t)=\left\{ \gamma _{N}^{(k)}\right\} _{k=1}^{N}\right\}
\subset \bigoplus_{k\geqslant 1}C\left( \left[ 0,T\right] ,\mathcal{L}%
_{k}^{1}\right) ,
\end{equation*}%
which satisfies the 2D focusing BBGKY hierarchy%
\begin{eqnarray}
i\partial _{t}\gamma _{N}^{(k)} &=&\sum_{j=1}^{k}\left[ -\bigtriangleup
_{x_{j}}+\omega ^{2}\left\vert x_{j}\right\vert ^{2},\gamma _{N}^{(k)}\right]
+\frac{1}{N}\sum_{1\leqslant i<j\leqslant k}\left[ V_{N}(x_{i}-x_{j}),\gamma
_{N}^{(k)}\right]  \label{hierarchy:focusing BBGKY} \\
&&+\frac{N-k}{N}\sum_{j=1}^{k}\limfunc{Tr}\nolimits_{k+1}\left[
V_{N}(x_{j}-x_{k+1}),\gamma _{N}^{(k+1)}\right] ,  \notag
\end{eqnarray}%
where $V<0$, subject to energy condition (\ref{EnergyBound:BBGKY}) is
compact with respect to the product topology $\tau _{prod}$. For any limit
point $\Gamma (t)=\left\{ \gamma ^{(k)}\right\} _{k=1}^{N},$ $\gamma ^{(k)}$
is a symmetric nonnegative trace class operator with trace bounded by $1,$
and it verifies the energy bound%
\begin{equation}
\sup_{t\in \left[ 0,T\right] }\limfunc{Tr}S^{(k)}\gamma
^{(k)}S^{(k)}\leqslant C^{k}.  \label{EnergyBound:GP}
\end{equation}
\end{lemma}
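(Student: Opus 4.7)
The plan is to adapt the Arzelà–Ascoli style compactness argument used in the repulsive literature (e.g.\ \cite{Kirpatrick, Chen3DDerivation, C-H3Dto2D}) to the focusing setting. The essential new input that makes the adaptation go through is the a priori energy bound \eqref{EnergyBound:BBGKY}, which follows from Theorem \ref{thm:main energy}. Since $\tau_{prod}$ is a product topology on $\bigoplus_{k\geq 1} C([0,T],\mathcal{L}_k^1)$, a diagonal argument reduces the problem to proving, for each fixed $k$, that $\{\gamma_N^{(k)}\}_N$ is relatively compact in $C([0,T],\mathcal{L}_k^1)$ with respect to $\hat d_k$. Pointwise-in-$t$ compactness is immediate from Banach–Alaoglu since each $\gamma_N^{(k)}(t)$ has trace norm $1$, so the task is an equicontinuity-in-$t$ estimate.

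Concretely, for each $J^{(k)}$ in the countable dense family $\{J_i^{(k)}\}$ defining $d_k$, one wants
\begin{equation*}
\bigl|\operatorname{Tr} J^{(k)}\bigl(\gamma_N^{(k)}(t_1)-\gamma_N^{(k)}(t_2)\bigr)\bigr| \leq C_{J^{(k)}}|t_1-t_2|
\end{equation*}
with $C_{J^{(k)}}$ independent of $N$. By density one may choose the $J^{(k)}$'s smooth enough that $S^{(k)} J^{(k)} S^{(k)}$ is bounded. Integrating the focusing BBGKY hierarchy \eqref{hierarchy:focusing BBGKY} against $J^{(k)}$ yields three types of contributions: the free/trap commutator $[-\triangle_{x_j}+\omega^2|x_j|^2,\gamma_N^{(k)}]$, the intra-$k$ interaction terms (with a $1/N$ prefactor), and the collision term $\operatorname{Tr}\nolimits_{k+1}[V_N(x_j-x_{k+1}),\gamma_N^{(k+1)}]$. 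The first is controlled by $\operatorname{Tr} S^{(k)}\gamma_N^{(k)}S^{(k)}\leq C^k$ via cyclicity of the trace and \eqref{EnergyBound:BBGKY}.

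The principal analytic step is the collision term. After using cyclicity, Hölder in the interaction variable (the relevant $L^{1+}$ norm of $V_N$ is $N$-uniformly bounded), and 2D Sobolev embedding---exactly the bookkeeping already carried out for $M_{22}$, $M_{23}$, and the cross errors in \S\ref{Sec:EnergyEstimate:k=k+2}---this term is dominated by a multiple of $\operatorname{Tr}(S^{(k+1)}\gamma_N^{(k+1)}S^{(k+1)})\leq C^{k+1}$, uniformly in $N$ by \eqref{EnergyBound:BBGKY}. The intra-$k$ terms are handled identically and further suppressed by their $1/N$ prefactor. This is precisely where the focusing case historically failed: the sign of $V$ blocks any naive absolute-value estimate, and the emerging $\delta$-function forces one to use regularity on $\gamma_N^{(k+1)}$. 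With Theorem \ref{thm:main energy} in hand, the estimate is essentially a repackaging of computations already performed in \S\ref{Sec:EnergyEstimate:k=k+2}; I expect this to be the main---though no longer insurmountable---technical obstacle.

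Finally, for any limit point $\gamma^{(k)}$, symmetry, nonnegativity, and $\operatorname{Tr}\gamma^{(k)}\leq 1$ pass to the weak-* limit by standard arguments for positive trace-class operators. The energy bound \eqref{EnergyBound:GP} is obtained by weak-* lower semicontinuity: for each cutoff $M$, the operator $P_{\leqslant M}^{(k)}(S^{(k)})^{2}P_{\leqslant M}^{(k)}$ is bounded, so
\begin{equation*}
\operatorname{Tr}\bigl(P_{\leqslant M}^{(k)}(S^{(k)})^{2}P_{\leqslant M}^{(k)}\,\gamma^{(k)}(t)\bigr) = \lim_{N\to\infty} \operatorname{Tr}\bigl(P_{\leqslant M}^{(k)}(S^{(k)})^{2}P_{\leqslant M}^{(k)}\,\gamma_N^{(k)}(t)\bigr) \leq C^k
\end{equation*}
uniformly in $M$, and letting $M\to\infty$ via monotone convergence yields \eqref{EnergyBound:GP}.
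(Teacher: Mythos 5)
Your overall architecture matches the paper's: reduce compactness in $\tau_{prod}$ to an $N$-uniform equicontinuity estimate $\bigl|\operatorname{Tr}J^{(k)}(\gamma_N^{(k)}(t_1)-\gamma_N^{(k)}(t_2))\bigr|\lesssim_{J^{(k)},k}|t_1-t_2|$ tested against a dense family of regular observables, prove it by integrating the hierarchy in time and invoking the a priori bound \eqref{EnergyBound:BBGKY}, and pass the positivity, trace, and energy bounds to the limit (your $P_{\leqslant M}$ truncation plus monotone convergence for \eqref{EnergyBound:GP} is fine, and you correctly identify that the sign of $V$ plays no role once \eqref{EnergyBound:BBGKY} is available).

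There is, however, a genuine gap in your treatment of the collision term, which is the one term without a $1/N$ prefactor. You assert that ``the relevant $L^{1+}$ norm of $V_N$ is $N$-uniformly bounded'' and propose to run the H\"older--Sobolev bookkeeping of $M_{22}$, $M_{23}$ from \S\ref{Sec:EnergyEstimate:k=k+2}. But $\Vert V_N\Vert_{L^{p}}=N^{2\beta(1-1/p)}\Vert V\Vert_{L^{p}}$, so for any $p>1$ this norm grows like a positive power of $N$; only the $L^{1}$ norm is exactly scale-invariant. In \S\ref{Sec:EnergyEstimate:k=k+2} that $N^{0+}$ loss was harmless because every such term carried at least a factor $N^{-1}$, whereas here the collision term has prefactor $\frac{N-k}{N}\approx 1$, so your route yields an equicontinuity constant of size $N^{0+}$ and the $\sup_N$ is not finite. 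The paper avoids this by conjugating with $S_j^{-1}S_{k+1}^{-1}$ on \emph{both} sides and using the endpoint two-particle operator estimate \eqref{estimat:sobolev in appendix}, namely $\Vert S_j^{-1}S_{k+1}^{-1}V_N(x_j-x_{k+1})S_j^{-1}S_{k+1}^{-1}\Vert_{\op}\leqslant C\Vert V_N\Vert_{L^1}=C\Vert V\Vert_{L^1}$, which exploits a full derivative in each of the two interacting variables (so that the difference variable effectively sees $H^{2}(\mathbb{R}^2)\hookrightarrow L^\infty$ and no $\epsilon$ of integrability is sacrificed); the resulting bound is $C\Vert V\Vert_{L^1}\operatorname{Tr}S_jS_{k+1}\gamma_N^{(k+1)}S_jS_{k+1}\leqslant C^{k+1}$, genuinely uniform in $N$. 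This also dictates the correct regularity class of test functions: what you need bounded are the conjugations $S_iS_jJ^{(k)}S_i^{-1}S_j^{-1}$ and $S_i^{-1}S_j^{-1}J^{(k)}S_iS_j$ (your condition that $S^{(k)}J^{(k)}S^{(k)}$ be bounded does imply these, since $S^{-2}$ is bounded, so this point is only cosmetic). With the $L^1$ endpoint estimate substituted for your $L^{1+}$ step, the rest of your argument goes through as in the paper.
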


\begin{lemma}[Convergence]
\label{THM:convergence to GP}Let $\Gamma (t)=\left\{ \gamma ^{(k)}\right\}
_{k=1}^{\infty }$ be a limit point of $\left\{ \Gamma _{N}(t)=\left\{ \gamma
_{N}^{(k)}\right\} _{k=1}^{N}\right\} ,$ the sequence in Theorem \ref%
{Theorem:CompactnessOfBBGKY}, with respect to the product topology $\tau
_{prod}$, then $\Gamma (t)$ is a solution to the focusing GP hierarchy 
\begin{equation}
i\partial _{t}\gamma ^{(k)}=\sum_{j=1}^{k}\left[ -\bigtriangleup
_{x_{j}}+\omega ^{2}\left\vert x_{j}\right\vert ^{2},\gamma ^{(k)}\right]
-b_{0}\sum_{j=1}^{k}\limfunc{Tr}\nolimits_{k+1}\left[ \delta
(x_{j}-x_{k+1}),\gamma ^{(k+1)}\right] ,  \label{hierarchy:focusing GP}
\end{equation}%
subject to initial data $\gamma ^{(k)}\left( 0\right) =\left\vert \phi
_{0}\right\rangle \left\langle \phi _{0}\right\vert ^{\otimes k}$ with
coupling constant $b_{0}=$ $\int \left\vert V\left( x\right) \right\vert dx$%
. which, written in integral form, is 
\begin{equation}
\gamma ^{(k)}\left( t\right) =U^{(k)}(t)\gamma ^{(k)}\left( 0\right)
+ib_{0}\sum_{j=1}^{k}\int_{0}^{t}U^{(k)}(t-s)\limfunc{Tr}\nolimits_{k+1}%
\left[ \delta \left( x_{j}-x_{k+1}\right) ,\gamma ^{(k+1)}\left( s\right) %
\right] ds.  \label{hierarchy:focusing GP in integral form}
\end{equation}%
where%
\begin{equation*}
U^{(k)}(t)=e^{it\left( -\bigtriangleup _{x_{j}}+\omega ^{2}\left\vert
x_{j}\right\vert ^{2}\right) }e^{-it\left( -\bigtriangleup _{x_{j}^{\prime
}}+\omega ^{2}\left\vert x_{j}^{\prime }\right\vert ^{2}\right) }.
\end{equation*}
\end{lemma}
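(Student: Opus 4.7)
The plan is to work with the BBGKY hierarchy \eqref{hierarchy:focusing BBGKY} in its Duhamel (integral) form and pass to the limit term-by-term after testing against a compact observable $J^{(k)} \in \mathcal{K}_k$. Writing $U^{(k)}(t)$ for the free two-parameter trap propagator, the BBGKY hierarchy reads
\begin{equation*}
\gamma_N^{(k)}(t) = U^{(k)}(t)\gamma_N^{(k)}(0) - i\int_0^t U^{(k)}(t-s)\bigl(B_{N,k}^{\text{in}}\gamma_N^{(k)}(s) + B_{N,k}^{\text{ex}}\gamma_N^{(k+1)}(s)\bigr)\,ds,
\end{equation*}
where $B_{N,k}^{\text{in}}$ is the internal interaction with prefactor $1/N$ and $B_{N,k}^{\text{ex}}$ is the external interaction involving the partial trace with prefactor $(N-k)/N$. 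Along the subsequence extracting the limit point $\Gamma(t)$ from Lemma \ref{Theorem:CompactnessOfBBGKY}, the left side and the free term $U^{(k)}(t)\gamma_N^{(k)}(0)$ pass to the limit directly from weak-$\ast$ convergence and the fact that $U^{(k)}(t)$ acts continuously on $\mathcal{L}_k^1$ after conjugation by $J^{(k)}$; the initial data converges to $|\phi_0\rangle\langle\phi_0|^{\otimes k}$ by hypothesis (b) and the bosonic symmetry plus a standard induction on $k$.

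The internal term is easy to dispose of: since $V_N$ is a multiplication operator with $\|V_N\|_{L^1}=\|V\|_{L^1}$, and the trace in $x_j$ of $|V_N(x_i-x_j)|$ applied to a state with finite $S^{(k)}$-energy is controlled by $C \|V\|_{L^1} \operatorname{Tr} S^{(k)} \gamma_N^{(k)} S^{(k)}$ via a Sobolev/Young estimate of exactly the type used in \S\ref{Sec:EnergyEstimate:k=k+2}, the factor $1/N$ together with (\ref{EnergyBound:BBGKY}) kills this contribution uniformly in $s \in [0,T]$. The prefactor $(N-k)/N$ in front of the external term tends to $1$, so it remains to show
\begin{equation*}
\int_0^t \operatorname{Tr} J^{(k)} U^{(k)}(t-s) \operatorname{Tr}_{k+1}\bigl[V_N(x_j-x_{k+1}), \gamma_N^{(k+1)}(s)\bigr]\,ds \longrightarrow -b_0 \int_0^t \operatorname{Tr} J^{(k)} U^{(k)}(t-s)\operatorname{Tr}_{k+1}\bigl[\delta(x_j-x_{k+1}),\gamma^{(k+1)}(s)\bigr]\,ds.
\end{equation*}
The minus sign and the coefficient $b_0=|\int V|$ on the right come from $V\le 0$, so $V_N \rightharpoonup -b_0\,\delta$ as distributions.

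This last convergence, which I expect to be the main obstacle, is handled by the now-standard two-step trick: one inserts a smooth approximation $\delta_\varepsilon$ of the delta function and splits the difference into three pieces,
\begin{equation*}
(V_N+b_0\delta_\varepsilon)\,\gamma_N^{(k+1)},\qquad (-b_0\delta_\varepsilon)\bigl(\gamma_N^{(k+1)}-\gamma^{(k+1)}\bigr),\qquad b_0(\delta_\varepsilon-\delta)\gamma^{(k+1)}.
\end{equation*}
The first piece is controlled by a Sobolev-type estimate $\|f \ast (V_N+b_0\delta_\varepsilon)\|_{L^2} \le \|V_N+b_0\delta_\varepsilon\|_{\dot H^{-1+}}\|f\|_{\dot H^{1-}}$ applied to the $(k{+}1)$-st coordinate, combined with the uniform-in-$N$ bound (\ref{EnergyBound:BBGKY}); here $\|V_N+b_0\delta_\varepsilon\|_{\dot H^{-1+}}\to 0$ as $N\to\infty$ for fixed $\varepsilon$, because $\widehat{V_N}(\xi)\to \widehat{V}(0)=-b_0$ pointwise with $|\widehat{V_N}|\le\|V\|_{L^1}$. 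The second piece vanishes upon passing $N\to\infty$ by weak-$\ast$ convergence, since $\delta_\varepsilon$ convolved with a smooth compact observable (after sandwiching by $S^{(k+1)\,-1}$ and using (\ref{EnergyBound:GP})) produces a compact test operator. The third piece vanishes as $\varepsilon\to 0$ by the $\dot H^1$ regularity of $\gamma^{(k+1)}$ from (\ref{EnergyBound:GP}). Taking $N\to\infty$ first, then $\varepsilon\to 0$, gives the required convergence and hence identifies $\Gamma(t)$ as a solution of \eqref{hierarchy:focusing GP in integral form}.
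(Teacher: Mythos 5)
Your overall architecture (Duhamel form, testing against compact observables, discarding the internal interaction via the $1/N$ prefactor and the uniform energy bound, compactifying the observable in the $(k+1)$-st variable before invoking weak* convergence) matches the paper's, but the key step --- the convergence of the external interaction term --- rests on a false claim. You assert that the first piece $(V_N+b_0\delta_\varepsilon)\gamma_N^{(k+1)}$ is controlled because $\left\Vert V_N+b_0\delta_\varepsilon\right\Vert_{\dot H^{-1+}}\rightarrow 0$ as $N\rightarrow\infty$ for fixed $\varepsilon$. This is not true: $V_N\rightharpoonup -b_0\delta$, not $-b_0\delta_\varepsilon$, so $V_N+b_0\delta_\varepsilon\rightharpoonup b_0(\delta_\varepsilon-\delta)\neq 0$ for fixed $\varepsilon$. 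Worse, the norm you invoke actually diverges: $\left\Vert V_N\right\Vert_{\dot H^{-1+\epsilon}}^2=\int |\xi|^{-2+2\epsilon}|\hat V(\xi/N^{\beta})|^2\,d\xi = N^{2\beta\epsilon}\int|\eta|^{-2+2\epsilon}|\hat V(\eta)|^2\,d\eta\sim N^{2\beta\epsilon}$ in 2D (and even at the endpoint it grows like $\log N$). The underlying obstruction is that in 2D the pairing of $\delta(x_j-x_{k+1})$ against a density is \emph{not} controlled by $H^{1}$ regularity in a single variable --- this is precisely the failure recorded in Lemma \ref{L:trace-fail} --- so no estimate of the form $\Vert W\Vert_{\dot H^{-1+}}\Vert\cdot\Vert_{\dot H^{1-}}^{2}$ taken in the $x_{k+1}$ coordinate alone can close this step.

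The repair is to keep the two approximations separate, as the paper does: split the difference into \emph{four} pieces, $(-V_N-b_0\delta)\gamma_N^{(k+1)}$, $b_0(\delta-\rho_\alpha)\gamma_N^{(k+1)}$, $b_0\rho_\alpha(\gamma_N^{(k+1)}-\gamma^{(k+1)})$, and $b_0(\rho_\alpha-\delta)\gamma^{(k+1)}$, and estimate the first, second and fourth by Lemma \ref{Lemma:ComparingDeltaFunctions}, which exploits $H^{1}$ regularity in \emph{both} $x_j$ and $x_{k+1}$ (i.e. the quantity $\operatorname{Tr}S_jS_{k+1}\gamma^{(k+1)}S_jS_{k+1}$, supplied uniformly by (\ref{EnergyBound:BBGKY}) and (\ref{EnergyBound:GP})) to produce the rates $N^{-\kappa\beta}$ and $\alpha^{\kappa}$. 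Your second and third pieces do correspond to the paper's terms $VI$ and $VII$, and your treatment of them (sandwiching with a resolvent of $S_{k+1}$ to get a compact test operator, then weak* convergence plus the energy bounds) is essentially the paper's argument; only the first piece needs the correction above.
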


\begin{lemma}[Uniqueness]
\footnote{%
One can also use the Strichartz type uniqueness theorems \cite[Theorem 3]%
{ChenAnisotropic} or \cite[Theorem 7.1]{Kirpatrick} here.}\label%
{THM:uniqueness of GP}If $\Gamma (t)=\left\{ \gamma ^{(k)}\right\}
_{k=1}^{\infty }$ is a solution to (\ref{hierarchy:focusing GP}) subject to
the following two conditions:

(a) $\Gamma (t)$ is sequence of normalized symmetry nonnegative trace class
opertors which is a limit point of some $N$-body marginals with respect to
the product topology $\tau _{prod}$ or satisifes $\limfunc{Tr}_{k+1}\gamma
^{(k+1)}=\gamma ^{(k)}$.

(b) For some $\alpha \geqslant \frac{2}{3}$, we have the regularity estimate 
\begin{equation*}
\sup_{t\in \left[ 0,T\right] }\limfunc{Tr}\left( S^{(k)}\right) ^{\alpha
}\gamma ^{(k)}\left( S^{(k)}\right) ^{\alpha }\leqslant C^{k},
\end{equation*}%
then $\Gamma (t)$ is also the only solution of (\ref{hierarchy:focusing GP})
subject to (a) and (b).

In particular, if $\Gamma (t)$ checks (a) and (b) of this lemma and $\gamma
^{(k)}\left( 0\right) =\left\vert \phi _{0}\right\rangle \left\langle \phi
_{0}\right\vert ^{\otimes k}$ where $\phi _{0}$ satisfies $\left\Vert \left(
-\triangle _{x}+\omega ^{2}\left\vert x\right\vert ^{2}\right) ^{\frac{1}{2}%
}\phi _{0}\right\Vert _{L^{2}\left( \mathbb{R}\right) }<\infty $, then 
\begin{equation*}
\gamma ^{(k)}(t)=\left\vert \phi (t)\right\rangle \left\langle \phi
(t)\right\vert ^{\otimes k}
\end{equation*}%
where $\phi (t)$ solves the 2D focusing cubic NLS (\ref%
{equation:TargetCubicNLS}). This is because $\left\vert \phi
(t)\right\rangle \left\langle \phi (t)\right\vert ^{\otimes k}$ is a
solution to (\ref{hierarchy:focusing GP}) subject to (a) and (b) of this
lemma.
\end{lemma}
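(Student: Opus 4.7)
The plan is to reduce to a Duhamel iteration controlled by a Klainerman--Machedon style collapsing estimate, along the lines of the Strichartz-type uniqueness results cited in the footnote. Let $\Gamma(t) = \{\gamma^{(k)}\}$ and $\tilde\Gamma(t) = \{\tilde\gamma^{(k)}\}$ be two solutions of the focusing GP hierarchy (\ref{hierarchy:focusing GP}) satisfying (a), (b) and with $\gamma^{(k)}(0) = \tilde\gamma^{(k)}(0)$. Set $\Delta^{(k)} = \gamma^{(k)} - \tilde\gamma^{(k)}$, so that $\Delta^{(k)}$ solves the linear hierarchy with vanishing initial data and, by (\ref{hierarchy:focusing GP in integral form}),
$$\Delta^{(k)}(t) = i b_{0} \sum_{j=1}^{k} \int_{0}^{t} U^{(k)}(t-s)\, \operatorname{Tr}\nolimits_{k+1}[\delta(x_{j}-x_{k+1}), \Delta^{(k+1)}(s)]\, ds.$$
Iterating this identity $n$ times expresses $\Delta^{(k)}(t)$ as a sum of $k(k+1)\cdots(k+n-1)$ iterated integrals over a simplex in $[0,t]^{n}$, each of the form $U^{(k)} B^{(k)}_{j_{1}} U^{(k+1)} B^{(k+1)}_{j_{2}} \cdots \Delta^{(k+n)}(t_{n})$.

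Next, I would apply the Klainerman--Machedon board game argument to regroup the factorial collection of terms into at most $C^{n}$ equivalence classes, each being an integral over a larger subregion of $[0,t]^{n}$. For each such class I apply a collapsing / trace estimate for the Hermite propagator $U^{(k)}$ that bounds the composed operator $S^{(k)} B^{(k)} U^{(k+1)}(\tau) \,\cdot\, (S^{(k+1)})^{-1}$ in an appropriate norm by a quantity integrable in $\tau$. In 2D with trap, this is exactly the content of \cite[Theorem 3]{ChenAnisotropic} and \cite[Theorem 7.1]{Kirpatrick}, and it closes at the regularity level $\alpha \geq 2/3$ (a refinement of X.Chen--Holmer). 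Combining the $C^{n}$ combinatorial bound, the collapsing estimate, and the a priori bound (\ref{EnergyBound:GP}) produces an inequality of the form
$$\hat d_{k}(\gamma^{(k)}, \tilde\gamma^{(k)}) \leq (C_{0} t_{0})^{n}\, C^{k+n}$$
on a time interval $[0, t_{0}]$ whose length depends only on the constants in (b). Choosing $t_{0}$ so that $C_{0} C\, t_{0} < 1$ and sending $n \to \infty$ forces $\Delta^{(k)} \equiv 0$ on $[0, t_{0}]$; the time-uniform bound in (b) lets one iterate the argument in time to cover all $t$.

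For the second conclusion, I would check that the tensor product $\gamma^{(k)}(t) = |\phi(t)\rangle\langle\phi(t)|^{\otimes k}$, with $\phi$ the $\Sigma^{1}$-solution of the focusing NLS (\ref{equation:TargetCubicNLS}), solves (\ref{hierarchy:focusing GP}) and verifies both (a) (trivially, since $\operatorname{Tr}_{k+1}|\phi\rangle\langle\phi|^{\otimes(k+1)} = |\phi\rangle\langle\phi|^{\otimes k}$) and (b) (from the $\Sigma^{1}$ well-posedness of the 2D focusing cubic NLS, which is global in time because the hypothesis $\|V\|_{L^{1}} < 2\alpha / C_{gn}^{4}$ places the problem below the mass threshold via the sharp Gagliardo--Nirenberg inequality (\ref{eq:G-N estimate})). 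The uniqueness statement then forces $\gamma^{(k)}(t) = |\phi(t)\rangle\langle\phi(t)|^{\otimes k}$. The main obstacle is the presence of the harmonic trap: the standard collapsing estimates of \cite{KlainermanAndMachedon, TChenAndNPSpace-Time} are translation invariant and do not apply directly. One must either invoke the Hermite-adapted Strichartz estimates of \cite{ChenAnisotropic, Kirpatrick} or, alternatively, apply the lens transform on the short interval $[0, \pi/(2\omega))$ to reduce to the flat case and iterate. Apart from this adaptation, the argument is insensitive to the sign of $V_{N}$, so the defocusing-case machinery transfers unchanged.
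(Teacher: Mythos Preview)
Your proposal is sound but takes a genuinely different route from the paper. The paper does not run the Klainerman--Machedon board game; instead it invokes the quantum de Finetti method of \cite{TCNPdeFinitte} and \cite{C-PUniqueness}. There, condition (a) (admissibility of the sequence $\{\gamma^{(k)}\}$) is used to represent each $\gamma^{(k)}(t)$ as $\int |\phi\rangle\langle\phi|^{\otimes k}\,d\mu_t(\phi)$ for a Borel measure $\mu_t$ on the unit sphere of $L^2$, after which uniqueness of the hierarchy reduces to a Strichartz/Gronwall argument at the NLS level on the de Finetti measure. The only modifications the paper notes are replacing the free Strichartz estimates by those for the Hermite propagator $e^{it(\Delta-\omega^2|x|^2)}$ and using $\|f\|_{H^\alpha}\lesssim\|S^\alpha f\|_{L^2}$.

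The comparison: your board game route never uses condition (a), so it proves uniqueness in a formally larger class of hierarchy solutions, but at the cost of the Duhamel combinatorics and the collapsing estimates. The paper's de Finetti route uses (a) essentially and avoids the combinatorics entirely; it also explains more naturally why the threshold in (b) is $\alpha\geq 2/3$, since that is the regularity at which the reduced NLS-level Strichartz argument of \cite{C-PUniqueness,HoTaXi14} closes in 2D. One small caution in your write-up: the Strichartz-type theorems you cite, \cite[Theorem~3]{ChenAnisotropic} and \cite[Theorem~7.1]{Kirpatrick}, are stated at the $H^1$ level, so the assertion that the board game ``closes at $\alpha\geq 2/3$'' would require a lower-regularity collapsing estimate not supplied by those references; this is a gap in attribution rather than in strategy.
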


To prove Lemma \ref{Theorem:CompactnessOfBBGKY} and \ref{THM:convergence to
GP}, we need the following lemma.

\begin{lemma}[{\protect\cite[Lemma A.2]{Kirpatrick}}]
\label{Lemma:ComparingDeltaFunctions}Let $f\in L^{1}\left( \mathbb{R}%
^{2}\right) $ such that $\int_{\mathbb{R}^{2}}\left\langle r\right\rangle
\left\vert f\left( r\right) \right\vert dr<\infty $ and $\int_{\mathbb{R}%
^{2}}f\left( r\right) dr=1$ but we allow that $f$ not be nonnegative
everywhere. Define $f_{\alpha }\left( r\right) =\alpha ^{-2}f\left( \frac{r}{%
\alpha }\right) .$ Then, for every $\kappa \in \left( 0,1\right) $ , there
exists $C_{\kappa }>0$ s.t.%
\begin{eqnarray*}
&&\left\vert \limfunc{Tr}J^{(k)}\left( f_{\alpha }\left(
r_{j}-r_{k+1}\right) -\delta \left( r_{j}-r_{k+1}\right) \right) \gamma
^{(k+1)}\right\vert \\
&\leqslant &C_{\kappa }\left( \int \left\vert f\left( r\right) \right\vert
\left\vert r\right\vert ^{\kappa }dr\right) \alpha ^{\kappa } \\
&&\times \left( \left\Vert (1-\bigtriangleup _{x_{j}})^{\frac{1}{2}%
}J^{(k)}(1-\bigtriangleup _{x_{j}})^{-\frac{1}{2}}\right\Vert _{\func{op}%
}+\left\Vert (1-\bigtriangleup _{x_{j}})^{-\frac{1}{2}}J^{(k)}(1-%
\bigtriangleup _{x_{j}})^{\frac{1}{2}}\right\Vert _{\func{op}}\right) \\
&&\times \limfunc{Tr}\left( 1-\bigtriangleup _{x_{j}}\right) \left(
1-\bigtriangleup _{x_{k+1}}\right) \gamma ^{(k+1)} \\
&\leqslant &C_{\kappa }\left( \int \left\vert f\left( r\right) \right\vert
\left\vert r\right\vert ^{\kappa }dr\right) \alpha ^{\kappa }\left(
\left\Vert S_{j}J^{(k)}S_{j}^{-1}\right\Vert _{\func{op}}+\left\Vert
S_{j}^{-1}J^{(k)}S_{j}\right\Vert _{\func{op}}\right) \limfunc{Tr}%
S_{j}S_{k+1}\gamma ^{(k+1)}S_{j}S_{k+1}
\end{eqnarray*}%
for all nonnegative $\gamma ^{(k+1)}\in \mathcal{L}^{1}\left( L^{2}\left( 
\mathbb{R}^{2k+2}\right) \right) .$
\end{lemma}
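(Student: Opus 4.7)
My plan is to prove this by a direct Fourier-space computation, exploiting the cancellation $\int f_\alpha = 1 = \int \delta$ together with the $\kappa$-th moment bound on $f$. Write $h = f_\alpha - \delta$, so $\hat h(\eta) = \hat f(\alpha\eta) - 1$. Because $\hat f(0) = 1$ and
$$|\hat f(\alpha\eta) - \hat f(0)| = \left| \int f(r)(e^{-i\alpha\eta\cdot r} - 1)\,dr \right| \leq 2^{1-\kappa} |\alpha\eta|^\kappa \int |f(r)||r|^\kappa\,dr,$$
using $|e^{it}-1| \leq 2^{1-\kappa}|t|^\kappa$ for $\kappa \in (0,1)$, I get the pointwise Fourier bound $|\hat h(\eta)| \leq C_\kappa \alpha^\kappa |\eta|^\kappa \cdot \int|f(r)||r|^\kappa dr$. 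The multiplication operator $h(x_j - x_{k+1})$ then acts in Fourier variables by the frequency transfer $\xi_j \mapsto \xi_j - \eta$, $\xi_{k+1} \mapsto \xi_{k+1} + \eta$, integrated against $\hat h(\eta)$.

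Next I would expand $\operatorname{Tr}(J^{(k)} h(x_j - x_{k+1}) \gamma^{(k+1)})$ in this momentum representation and split $|\eta|^\kappa$ across the two sides of $\gamma^{(k+1)}$ and across the two sides of $h$. The key identity is $\eta = \xi_j - \xi_j'$ (where primed variables denote the side after the transfer), which gives $|\eta|^\kappa \leq \langle \xi_j\rangle^\kappa + \langle \xi_j'\rangle^\kappa$. Because $\kappa < 1$, each factor $\langle \xi\rangle^\kappa$ is dominated by $\langle\xi\rangle$, producing a full power of $S_j$. One copy of $S_j$ lands directly on $\gamma^{(k+1)}$ from the unprimed side; the other copy must be moved across $J^{(k)}$ from the primed side, which is precisely where the operator norms $\|S_j J^{(k)} S_j^{-1}\|_{\op}$ and $\|S_j^{-1} J^{(k)} S_j\|_{\op}$ enter (depending on which term in the split we use). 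After this step, both sides of $\gamma^{(k+1)}$ carry a full derivative $S_j$.

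Finally, the $S_{k+1}$ factors come in via Cauchy--Schwarz applied to the remaining integral. Since $\eta$ also equals $\xi_{k+1}' - \xi_{k+1}$, the transfer variable is controlled by $\langle \xi_{k+1}\rangle + \langle \xi_{k+1}'\rangle$; more concretely, Cauchy--Schwarz in the variable $x_{k+1}$ inserts a factor $\langle \xi_{k+1}\rangle \langle \xi_{k+1}'\rangle$ which can be absorbed into the trace, at the cost of a constant depending on $\int|f||r|^\kappa dr$ and $\alpha^\kappa$. Collecting everything yields the stated bound, with $\operatorname{Tr} S_j S_{k+1} \gamma^{(k+1)} S_j S_{k+1}$ on the right-hand side. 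The main technical obstacle is the bookkeeping in the last two paragraphs: one must distribute $|\eta|^\kappa$ carefully so that each endpoint factor is at most $\langle \cdot\rangle^1$ (using $\kappa<1$) and simultaneously ensure the $J^{(k)}$ commutator norms appear in the correct operator-norm form, rather than in a trace norm which would not be controlled. The constraint $\kappa < 1$ is essential here because it keeps us away from the failure of the 2D endpoint trace/Sobolev estimate---precisely the same obstruction that forced the mollification in the first place.
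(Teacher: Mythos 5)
The paper does not prove this lemma itself---it is quoted verbatim from Kirkpatrick--Schlein--Staffilani \cite[Lemma A.2]{Kirpatrick}---and your momentum-space argument (the pointwise bound $|\hat f(\alpha\eta)-1|\lesssim_{\kappa}\alpha^{\kappa}|\eta|^{\kappa}\int|f||r|^{\kappa}$, the frequency-transfer representation of multiplication by $f_{\alpha}-\delta$, the splitting $|\eta|^{\kappa}\leq\langle\xi_{j}\rangle^{\kappa}+\langle\xi_{j}'\rangle^{\kappa}$ with one derivative commuted through $J^{(k)}$, and weighted Cauchy--Schwarz in the $(k+1)$-st variable) is essentially that reference's proof. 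The outline is correct, including the correct identification of where the conjugated operator norms of $J^{(k)}$ enter and why $\kappa<1$ leaves the $\langle\cdot\rangle^{1-\kappa}$ of extra decay needed to avoid the 2D endpoint failure.
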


\begin{proof}[Proof of Compactness]
By \cite[Lemma 6.2]{E-S-Y3}, this is equivalent to the statement that for
every test function $f^{(k)}$ from a dense subset of $\mathcal{K}_{k}$ and
for every $\varepsilon >0$, there exists $\delta (f^{(k)},\varepsilon )$
such that for all $t_{1},t_{2}\in \left[ 0,T\right] $ with $\left\vert
t_{1}-t_{2}\right\vert \leqslant \delta $, we have 
\begin{equation*}
\sup_{N}\left\vert \func{Tr}f^{(k)}\gamma _{N}^{(k)}(t_{1})-\func{Tr}%
f^{(k)}\gamma _{N}^{(k)}(t_{2})\right\vert \leqslant \varepsilon \,.
\end{equation*}%
We select the test functions $f^{(k)}\in \mathcal{K}_{k}$ which satisfy 
\begin{equation*}
\left\Vert S_{i}S_{j}f^{(k)}S_{i}^{-1}S_{j}^{-1}\right\Vert _{\func{op}%
}+\left\Vert S_{i}^{-1}S_{j}^{-1}f^{(k)}S_{i}S_{j}\right\Vert _{\func{op}%
}<\infty ,
\end{equation*}%
Let $0\leqslant t_{1}\leqslant t_{2}\leqslant T$, we take advantage of the $%
\partial _{t}\gamma _{N}^{(k)}$ in the hierarchy (\ref{hierarchy:focusing
BBGKY}) and use the fundamental theorem of calculus to get to%
\begin{eqnarray*}
&&\left\vert \func{Tr}f^{(k)}\gamma _{N}^{(k)}(t_{2})-\func{Tr}f^{(k)}\gamma
_{N}^{(k)}(t_{1})\right\vert \\
&\leqslant &\sum_{j=1}^{k}\int_{t_{1}}^{t_{2}}\left\vert \func{Tr}f^{(k)}%
\left[ S_{j}^{2},\gamma _{N}^{(k)}\left( s\right) \right] \right\vert ds \\
&&+\frac{1}{N}\sum_{1\leqslant i<j\leqslant k}\int_{t_{1}}^{t_{2}}\left\vert 
\func{Tr}f^{(k)}\left[ V_{N}\left( x_{i}-x_{j}\right) ,\gamma
_{N}^{(k)}\left( s\right) \right] \right\vert ds \\
&&+\frac{N-k}{N}\sum_{j=1}^{k}\int_{t_{1}}^{t_{2}}\left\vert \func{Tr}f^{(k)}%
\left[ V_{N}\left( x_{j}-x_{k+1}\right) ,\gamma _{N}^{(k+1)}\left( s\right) %
\right] \right\vert ds.
\end{eqnarray*}%
We estimate each term as follow. The first term can be easily estimated%
\begin{eqnarray*}
&&\int_{t_{1}}^{t_{2}}\left\vert \func{Tr}f^{(k)}\left[ S_{j}^{2},\gamma
_{N}^{(k)}\left( s\right) \right] \right\vert ds \\
&=&\int_{t_{1}}^{t_{2}}\left\vert \func{Tr}S_{j}^{-1}f^{(k)}S_{j}S_{j}\gamma
_{N}^{(k)}\left( s\right) S_{j}-\func{Tr}S_{j}f^{(k)}S_{j}^{-1}S_{j}\gamma
_{N}^{(k)}\left( s\right) S_{j}\right\vert ds \\
&\leqslant &\int_{t_{1}}^{t_{2}}\left( \left\Vert
S_{j}^{-1}f^{(k)}S_{j}\right\Vert _{op}+\left\Vert
S_{j}f^{(k)}S_{j}^{-1}\right\Vert _{op}\right) \left( \func{Tr}S_{j}\gamma
_{N}^{(k)}\left( s\right) S_{j}\right) ds \\
&\leqslant &C_{f}C\left\vert t_{2}-t_{1}\right\vert .
\end{eqnarray*}%
For the second and the third terms, we use the fact that conjugation
preserves traces and the Sobolev inequality%
\begin{equation}
\left\Vert S_{ij}^{-1}S_{k+1}^{-1}V_{N}\left( x_{i}-x_{j}\right)
S_{j}^{-1}S_{k+1}^{-1}\right\Vert _{op}\leqslant C\left\Vert
V_{N}\right\Vert _{L^{1}}=C\left\Vert V\right\Vert _{L^{1}}
\label{estimat:sobolev in appendix}
\end{equation}%
to deduce%
\begin{eqnarray*}
&&\frac{1}{N}\sum_{1\leqslant i<j\leqslant k}\int_{t_{1}}^{t_{2}}\left\vert 
\func{Tr}f^{(k)}\left[ V_{N}\left( x_{i}-x_{j}\right) ,\gamma
_{N}^{(k)}\left( s\right) \right] \right\vert ds \\
&\leqslant &\frac{k^{2}}{N}\int_{t_{1}}^{t_{2}}|\func{Tr}%
S_{i}^{-1}S_{j}^{-1}f^{(k)}S_{i}S_{j}S_{i}^{-1}S_{j}^{-1}V_{N}\left(
x_{i}-x_{j}\right) S_{i}^{-1}S_{j}^{-1}S_{i}S_{j}\gamma _{N}^{(k)}\left(
s\right) S_{i}S_{j}|ds \\
&&+\frac{k^{2}}{N}\int_{t_{1}}^{t_{2}}|\func{Tr}%
S_{i}S_{j}f^{(k)}S_{i}^{-1}S_{j}^{-1}S_{i}S_{j}\gamma _{N}^{(k)}\left(
s\right) S_{i}S_{j}S_{i}^{-1}S_{j}^{-1}V_{N}\left( x_{i}-x_{j}\right)
S_{i}^{-1}S_{j}^{-1}|ds \\
&\leqslant &\frac{Ck^{2}}{N}\left( \left\Vert
S_{i}^{-1}S_{j}^{-1}f^{(k)}S_{i}S_{j}\right\Vert _{op}+\left\Vert
S_{i}S_{j}f^{(k)}S_{i}^{-1}S_{j}^{-1}\right\Vert _{op}\right) \left\Vert
S_{i}^{-1}S_{j}^{-1}V_{N}\left( x_{i}-x_{j}\right)
S_{i}^{-1}S_{j}^{-1}\right\Vert \\
&&\int_{t_{1}}^{t_{2}}\func{Tr}S_{i}S_{j}\gamma _{N}^{(k)}\left( s\right)
S_{i}S_{j}ds \\
&\leqslant &\frac{k^{2}}{N}C_{f}C^{2}\left\vert t_{2}-t_{1}\right\vert ,
\end{eqnarray*}%
and%
\begin{eqnarray*}
&&\frac{N-k}{N}\sum_{j=1}^{k}\int_{t_{1}}^{t_{2}}\left\vert \func{Tr}f^{(k)}%
\left[ V_{N}\left( x_{j}-x_{k+1}\right) ,\gamma _{N}^{(k+1)}\left( s\right) %
\right] \right\vert ds \\
&\leqslant &k\int_{t_{1}}^{t_{2}}|\func{Tr}%
S_{j}^{-1}S_{k+1}^{-1}f^{(k)}S_{j}S_{k+1}S_{j}^{-1}S_{k+1}^{-1}V_{N}\left(
x_{j}-x_{k+1}\right) S_{j}^{-1}S_{k+1}^{-1}S_{j}S_{k+1}\gamma
_{N}^{(k+1)}\left( s\right) S_{j}S_{k+1}|ds \\
&&+k\int_{t_{1}}^{t_{2}}|\func{Tr}%
S_{j}S_{k+1}f^{(k)}S_{j}^{-1}S_{k+1}^{-1}S_{j}S_{k+1}\gamma
_{N}^{(k+1)}\left( s\right) S_{j}S_{k+1}S_{j}^{-1}S_{k+1}^{-1}V_{N}\left(
x_{j}-x_{k+1}\right) S_{j}^{-1}S_{k+1}^{-1}|ds \\
&\leqslant &Ck\left( \left\Vert S_{j}^{-1}f^{(k)}S_{j}\right\Vert
_{op}+\left\Vert S_{j}f^{(k)}S_{j}^{-1}\right\Vert _{op}\right) \left\Vert
S_{ij}^{-1}S_{k+1}^{-1}V_{N}\left( x_{i}-x_{j}\right)
S_{j}^{-1}S_{k+1}^{-1}\right\Vert \\
&&\int_{t_{1}}^{t_{2}}\func{Tr}S_{j}S_{k+1}\gamma _{N}^{(k+1)}\left(
s\right) S_{j}S_{k+1}ds \\
&\leqslant &kC_{f}C^{2}\left\vert t_{2}-t_{1}\right\vert .
\end{eqnarray*}%
That is%
\begin{equation*}
\left\vert \func{Tr}f^{(k)}\gamma _{N}^{(k)}(t_{2})-\func{Tr}f^{(k)}\gamma
_{N}^{(k)}(t_{1})\right\vert \leqslant C_{f,k}\left\vert
t_{2}-t_{1}\right\vert ,
\end{equation*}%
which is enough to end the proof of Theorem \ref{Theorem:CompactnessOfBBGKY}.
\end{proof}

\begin{proof}[Proof of Convergence]
By Theorem \ref{Theorem:CompactnessOfBBGKY}, passing to subsequences if
necessary, we have 
\begin{equation}
\lim_{N\rightarrow \infty }\sup_{t\in \lbrack 0,T]}\limfunc{Tr}f^{(k)}\left(
\gamma _{N}^{(k)}-\gamma ^{(k)}\right) =0\text{, }\forall f^{(k)}\in 
\mathcal{K}_{k}.  \label{condition:fast convergence}
\end{equation}

We test (\ref{hierarchy:focusing GP in integral form}) against the test
functions $f^{(k)}$ in Theorem \ref{Theorem:CompactnessOfBBGKY}. We prove
that the limit point verifies%
\begin{equation}
\limfunc{Tr}f^{(k)}\gamma ^{(k)}\left( 0\right) =\limfunc{Tr}%
f^{(k)}\left\vert \phi _{0}\right\rangle \left\langle \phi _{0}\right\vert
^{\otimes k},  \label{limit:testing initial data}
\end{equation}%
and%
\begin{eqnarray}
\limfunc{Tr}f^{(k)}\gamma ^{(k)} &=&\limfunc{Tr}f^{(k)}U^{(k)}(t)\gamma
^{(k)}\left( 0\right)  \label{limit:testing the limit pt} \\
&&+ib_{0}\sum_{j=1}^{k}\int_{0}^{t}\limfunc{Tr}f^{(k)}U^{(k)}(t-s)\left[
\delta \left( x_{j}-x_{k+1}\right) ,\gamma ^{(k+1)}\left( s\right) \right]
ds.  \notag
\end{eqnarray}%
Rewrite the BBGKY hierarchy (\ref{hierarchy:focusing BBGKY}) as the following%
\begin{eqnarray*}
\limfunc{Tr}f^{(k)}\gamma _{N}^{(k)} &=&\limfunc{Tr}f^{(k)}U^{(k)}(t)\gamma
_{N}^{(k)}\left( 0\right) \\
&&+\frac{i}{N}\sum_{1\leqslant i<j\leqslant k}\int_{0}^{t}\limfunc{Tr}%
f^{(k)}U^{(k)}(t-s)\left[ -V_{N}(x_{i}-x_{j}),\gamma _{N}^{(k)}(s)\right] ds
\\
&&+i\frac{N-k}{N}\sum_{j=1}^{k}\int_{0}^{t}\limfunc{Tr}f^{(k)}U^{(k)}(t-s)%
\left[ -V_{N}(x_{j}-x_{k+1}),\gamma _{N}^{(k+1)}(s)\right] ds \\
&=&I+\frac{i}{N}\sum_{1\leqslant i<j\leqslant k}II+i\left( 1-\frac{k}{N}%
\right) \sum_{j=1}^{k}III.
\end{eqnarray*}%
Notice that $b_{0}=-\int V_{N}(x)dx$, we have put a minus sign in front of $%
V_{N}$ to match \ref{limit:testing the limit pt}.

Immediately following (\ref{condition:fast convergence}), we have%
\begin{eqnarray*}
\lim_{N\rightarrow \infty }\limfunc{Tr}f^{(k)}u_{N}^{(k)} &=&\limfunc{Tr}%
f^{(k)}u^{(k)}, \\
\lim_{N\rightarrow \infty }\limfunc{Tr}f^{(k)}U^{(k)}(t)\gamma
_{N}^{(k)}\left( 0\right) &=&\limfunc{Tr}f^{(k)}U^{(k)}(t)f^{(k)}\left(
0\right) .
\end{eqnarray*}%
By the well-known argument on \cite[p.64]{Lieb2}, we know $\gamma
_{N}^{(k)}\left( 0\right) \rightarrow \left\vert \phi _{0}\right\rangle
\left\langle \phi _{0}\right\vert ^{\otimes k}$ strongly as trace operators
because $\gamma _{N}^{(1)}\left( 0\right) \rightarrow \left\vert \phi
_{0}\right\rangle \left\langle \phi _{0}\right\vert $ strongly as trace
operators. So we have checked relation (\ref{limit:testing initial data})
and the left hand side and the first term on the right hand side of (\ref%
{limit:testing the limit pt}) for $\Gamma (t)$.

We now prove%
\begin{equation}
\lim_{N\rightarrow \infty }\frac{II}{N}=\lim_{N\rightarrow \infty }\frac{k}{N%
}III=0,  \label{convergence:convergence to zero}
\end{equation}%
and%
\begin{equation}
\lim_{N\rightarrow \infty }III=\int_{0}^{t}\limfunc{Tr}J^{(k)}U^{(k)}(t-s)%
\left[ \delta \left( x_{j}-x_{k+1}\right) ,\gamma ^{(k+1)}\left( s\right) %
\right] ds.  \label{limit:emergence of the delta function}
\end{equation}%
In the proof of Theorem \ref{Theorem:CompactnessOfBBGKY}, we have already
shown that $\left\vert II\right\vert $ and $\left\vert III\right\vert $ are
uniformly bounded for every finite time, thus (\ref{convergence:convergence
to zero}) has been checked. So we are left to prove \ref{limit:emergence of
the delta function}. To use Lemma \ref{Lemma:ComparingDeltaFunctions}, we
take a probability measure $\rho \in L^{1}\left( \mathbb{R}^{2}\right) $,
define $\rho _{\alpha }\left( y\right) =\frac{1}{\alpha ^{2}}\rho \left( 
\frac{y}{\alpha }\right) .$ Adopt the notation $f_{s-t}^{(k)}=f^{(k)}U^{(k)}%
\left( t-s\right) $, we have%
\begin{align*}
\hspace{0.3in}& \hspace{-0.3in}\left\vert \limfunc{Tr}f^{(k)}U^{(k)}\left(
t-s\right) \left( -V_{N}\left( x_{j}-x_{k+1}\right) \gamma
_{N}^{(k+1)}\left( s\right) -b_{0}\delta \left( x_{j}-x_{k+1}\right) \gamma
^{(k+1)}\left( s\right) \right) \right\vert \\
& \leqslant \left\vert \limfunc{Tr}f_{s-t}^{(k)}\left( -V_{N}\left(
x_{j}-x_{k+1}\right) -b_{0}\delta \left( x_{j}-x_{k+1}\right) \right) \gamma
_{N}^{(k+1)}\left( s\right) \right\vert \\
& \quad +b_{0}\left\vert \limfunc{Tr}f_{s-t}^{(k)}\left( \delta \left(
x_{j}-x_{k+1}\right) -\rho _{\alpha }\left( x_{j}-x_{k+1}\right) \right)
\gamma _{N}^{(k+1)}\left( s\right) \right\vert \\
& \quad +b_{0}\left\vert \limfunc{Tr}f_{s-t}^{(k)}\rho _{\alpha }\left(
x_{j}-x_{k+1}\right) \left( \gamma _{N}^{(k+1)}\left( s\right) -\gamma
^{(k+1)}\left( s\right) \right) \right\vert \\
& \quad +b_{0}\left\vert \limfunc{Tr}f_{s-t}^{(k)}\left( \rho _{\alpha
}\left( x_{j}-x_{k+1}\right) -\delta \left( x_{j}-x_{k+1}\right) \right)
\gamma ^{(k+1)}\left( s\right) \right\vert \\
& =IV+V+VI+VII.
\end{align*}%
Lemma \ref{Lemma:ComparingDeltaFunctions} and the energy condition (\ref%
{EnergyBound:BBGKY}) gives%
\begin{eqnarray*}
IV &\leqslant &\frac{C}{N^{\kappa \beta }}\left( \left\Vert
S_{j}^{-1}f^{(k)}S_{j}\right\Vert _{op}+\left\Vert
S_{j}f^{(k)}S_{j}^{-1}\right\Vert _{op}\right) \limfunc{Tr}%
S_{j}S_{k+1}\gamma _{N}^{(k+1)}S_{j}S_{k+1} \\
&\leqslant &\frac{C_{f}}{N^{\kappa \beta }}\rightarrow 0\text{ as }%
N\rightarrow \infty \text{, uniformly for }s\in \lbrack 0,T]\text{ with }%
T<\infty .
\end{eqnarray*}%
Similarly, we obtain $V,VII\leqslant C_{f}\alpha ^{\kappa }\rightarrow 0$ as 
$\alpha \rightarrow 0$. For VI, 
\begin{eqnarray*}
G &\leqslant &b_{0}\left\vert \limfunc{Tr}f_{s-t}^{(k)}\rho _{\alpha }\left(
x_{j}-x_{k+1}\right) \frac{1}{1+\varepsilon S_{k+1}}\left( \gamma
_{N}^{(k+1)}\left( s\right) -\gamma ^{(k+1)}\left( s\right) \right)
\right\vert \\
&&+b_{0}\left\vert \limfunc{Tr}f_{s-t}^{(k)}\rho _{\alpha }\left(
x_{j}-x_{k+1}\right) \frac{\varepsilon S_{k+1}}{1+\varepsilon S_{k+1}}\left(
\gamma _{N}^{(k+1)}\left( s\right) -\gamma ^{(k+1)}\left( s\right) \right)
\right\vert .
\end{eqnarray*}%
The first term in the above inequality tends to zero as $N\rightarrow \infty 
$ for every $\varepsilon >0$, since we have assumed (\ref{condition:fast
convergence}) and $f_{s-t}^{(k)}\rho _{\alpha }\left( x_{j}-x_{k+1}\right) 
\frac{1}{1+\varepsilon S_{k+1}}$ is a compact operator. Due to the energy
bounds (\ref{EnergyBound:BBGKY}) and (\ref{EnergyBound:GP}), the second term
tends to zero as $\varepsilon \rightarrow 0$, uniformly in $N$.

Combining the estimates for $IV-VII$, we have justified limit (\ref%
{limit:emergence of the delta function}) and thus limit (\ref{limit:testing
the limit pt}). Hence, we have proved Theorem \ref{THM:convergence to GP}.
\end{proof}

\begin{proof}[Proof of Uniqueness]
The proof is essentially already in \cite{TCNPdeFinitte} and \cite%
{C-PUniqueness}. One merely needs to set $\mathcal{A=}0$, switch the
Strichartz estimate for $e^{it\bigtriangleup }$ to the ones for $e^{it\left(
\bigtriangleup -\omega ^{2}\left\vert x\right\vert ^{2}\right) }$ in \cite%
{C-PUniqueness} and notice that $\left\Vert f\right\Vert _{H^{\alpha
}}\lesssim \left\Vert S^{\alpha }f\right\Vert _{L^{2}}$ for $\alpha
\geqslant 0$. We skip the details.
\end{proof}

\subsection{Proof of Theorem \protect\ref{Thm:2D Derivation}\label%
{Section:nonsmooth}}

Assuming Theorem \ref{THM:Main Theorem}, we now prove Theorem \ref{Thm:2D
Derivation}. If $\psi _{N}\left( 0\right) $ satisfies (a), (b), and (c) in
Theorem \ref{Thm:2D Derivation}, then $\psi _{N}\left( 0\right) $ checks the
requirements of the following lemma.

\begin{lemma}
\label{Lemma:B2}Assume $\psi _{N}\left( 0\right) $ satisfies (a), (b), and
(c) in Theorem \ref{Thm:2D Derivation}. Let $\chi \in C_{0}^{\infty }\left( 
\mathbb{R}\right) $ be a cut-off such that $0\leqslant \chi \leqslant 1$, $%
\chi \left( s\right) =1$ for $0\leqslant s\leqslant 1$ and $\chi \left(
s\right) =0$ for $s\geqslant 2.$ For $\kappa >0,$ we define an approximation 
$\psi _{N}^{\kappa }(0)$ of $\psi _{N}\left( 0\right) $ by 
\begin{equation}
\psi _{N}^{\kappa }(0)=\frac{\chi \left( \kappa H_{N}/N\right) \psi
_{N}\left( 0\right) }{\left\Vert \chi \left( \kappa H_{N}/N\right) \psi
_{N}\left( 0\right) \right\Vert }.  \label{def:smooth approximation}
\end{equation}%
This approximation has the following properties:

(i) $\psi _{N}^{\kappa }(0)$ verifies the energy condition 
\begin{equation*}
\langle \psi _{N}^{\kappa }(0),H_{N}^{k}\psi _{N}^{\kappa }(0)\rangle
\leqslant \frac{2^{k}N^{k}}{\kappa ^{k}}.
\end{equation*}

(ii) 
\begin{equation*}
\sup_{N}\left\Vert \psi _{N}^{\kappa }(0)-\psi _{N}(0)\right\Vert
_{L^{2}}\leqslant C\kappa ^{\frac{1}{2}}.
\end{equation*}

(iii) For small enough $\kappa >0$, $\psi _{N}^{\kappa }(0)$ is
asymptotically factorized as well 
\begin{equation*}
\lim_{N\rightarrow \infty }\limfunc{Tr}\left\vert \gamma _{N}^{\kappa
,(1)}(0,x_{1};x_{1}^{\prime })-\phi _{0}(x_{1})\overline{\phi _{0}}%
(x_{1}^{\prime })\right\vert =0,
\end{equation*}%
where $\gamma _{N}^{\kappa ,(1)}\left( 0\right) $ is the marginal density
associated with $\psi _{N}^{\kappa }(0)$ and $\phi _{0}$ is the same as in
assumption (b) in Theorem \ref{Thm:2D Derivation}.
\end{lemma}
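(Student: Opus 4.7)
The three items of Lemma~\ref{Lemma:B2} sit in a clear hierarchy: (ii) is the basic $L^2$ approximation; (i) is a direct spectral-calculus consequence of the definition of $\psi_N^\kappa(0)$ once (ii) controls the normalization; and (iii) follows from (ii) together with hypothesis (b) of Theorem~\ref{Thm:2D Derivation} via the triangle inequality. The only input beyond functional calculus is a two-sided operator control on $N^{-1}H_N$ on the symmetric subspace: hypothesis (c) of Theorem~\ref{Thm:2D Derivation} provides the upper bound $\langle \psi_N(0), H_N \psi_N(0)\rangle \leq CN$, while Theorem~\ref{thm:k=1 energy}, applied with $k=1$ and $C_0$ small, furnishes the operator lower bound $N^{-1}H_N \geq -C_0 I$ for all $N\geq N_0$. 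Writing $|H_N|/N = 2(H_N/N)_+ - (H_N/N)$ and using $(H_N/N)_- \leq C_0 I$, these combine into a uniform bound $\langle \psi_N(0), |H_N|\psi_N(0)\rangle \leq C'N$.

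For (ii), I would exploit the freedom in the cut-off to arrange $\chi\equiv 1$ on all of $[-1,1]$ (this is consistent with the listed properties). Then $(1-\chi(s))^2$ vanishes on $[-1,1]$ and is bounded by $1\leq |s|$ outside, so $(1-\chi(s))^2\leq |s|$ pointwise on $\mathbb{R}$. For $\kappa$ so small that $C_0\kappa \leq 1$, the spectrum of $\kappa H_N/N$ lies in $[-1,\infty)$, and the spectral theorem yields
\[
\bigl\|(1 - \chi(\kappa H_N/N))\psi_N(0)\bigr\|_{L^2}^2 \leq \frac{\kappa}{N}\langle \psi_N(0), |H_N|\psi_N(0)\rangle \leq C'\kappa .
\]
The elementary fact $\|v/\|v\| - u\|_{L^2}\leq 2\|v-u\|_{L^2}$ (valid for any unit vector $u$) then upgrades this to $\|\psi_N^\kappa(0) - \psi_N(0)\|_{L^2}\leq 2\sqrt{C'\kappa}$, proving (ii). For (i), spectral calculus gives the operator bound
\[
H_N^k \chi(\kappa H_N/N)^2 \leq (N/\kappa)^k\,\|s^k\chi(s)^2\|_{L^\infty(\mathbb{R})}\cdot I ,
\]
and $\|s^k\chi(s)^2\|_\infty\leq 2^k$ since $\chi$ is supported in $[-1,2]$. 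As the denominator in the definition of $\psi_N^\kappa(0)$ satisfies $\|\chi(\kappa H_N/N)\psi_N(0)\|^2\geq 1/2$ by (ii) (for $\kappa$ small), dividing delivers (i) up to an inessential constant factor.

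For (iii), I would combine (ii) with the standard trace-norm inequality $\mathrm{Tr}\bigl| |\psi^\kappa\rangle\langle\psi^\kappa| - |\psi\rangle\langle\psi|\bigr|\leq 2\|\psi^\kappa - \psi\|_{L^2}$ for unit vectors and with the contractivity of the partial trace, obtaining
\[
\mathrm{Tr}\bigl|\gamma_N^{\kappa,(1)}(0) - \gamma_N^{(1)}(0)\bigr| \leq C\kappa^{1/2}
\]
uniformly in $N$. A further triangle inequality against hypothesis (b) then produces
\[
\mathrm{Tr}\bigl|\gamma_N^{\kappa,(1)}(0) - |\phi_0\rangle\langle\phi_0|\bigr| \leq C\kappa^{1/2} + \mathrm{Tr}\bigl|\gamma_N^{(1)}(0) - |\phi_0\rangle\langle\phi_0|\bigr| ,
\]
whose second term vanishes as $N\to\infty$; since Lemma~\ref{Lemma:B2} feeds, in Section~\ref{Section:nonsmooth}, into a subsequent $\kappa\to 0$ limit, shrinking $\kappa$ afterwards delivers the asymptotic factorization required.

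The one genuine obstacle, paralleling the central difficulty of Section~\ref{Sec:Energy}, is the possible negativity of the spectrum of $H_N$ in the focusing regime: hypothesis (c) by itself is useless for bounding $\langle |H_N|\rangle$, and $s^k\chi(s)^2$ could be large along the spectrum of $\kappa H_N/N$ in the absence of an a priori lower bound on $N^{-1}H_N$. It is precisely Theorem~\ref{thm:k=1 energy} that rescues both, so Lemma~\ref{Lemma:B2} is essentially a direct functional-calculus corollary of the main energy estimate already established.
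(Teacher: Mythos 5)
Your arguments for (i) and (ii) are sound and capture the essential point: in the focusing setting one must first secure the operator lower bound $N^{-1}H_N\geqslant -C_0$ on $L^2_s$ from Theorem~\ref{thm:k=1 energy} before running the functional calculus with the cut-off, and this is indeed the spirit of the proofs in \cite{C-HFocusing,C-HFocusingII} that the paper cites for these two items. Two small remarks: for the finitely many $N\leqslant N_0(C_0)$ not covered by Theorem~\ref{thm:k=1 energy} you should observe that each individual $H_N$ is bounded below (since $V$ is bounded), so the $\sup_N$ in (ii) is still controlled; and in (i) the compact support of $\chi$ already bounds $\sup_s|s|^k\chi(s)^2$ with no spectral information at all, so the lower bound on $H_N$ is only genuinely needed for (ii).

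The gap is in (iii). What your triangle inequality proves is $\limsup_{N\rightarrow\infty}\mathrm{Tr}\,\bigl|\gamma_N^{\kappa,(1)}(0)-|\phi_0\rangle\langle\phi_0|\bigr|\leqslant C\kappa^{1/2}$, with the $O(\kappa^{1/2})$ surviving the $N\rightarrow\infty$ limit. The lemma asserts something strictly stronger: for each fixed small $\kappa$ the limit is exactly zero. This is not a cosmetic difference, because the stronger form is precisely what is consumed in \S\ref{Section:nonsmooth}: there $\psi_N^{\kappa}(0)$ must verify hypothesis (b) of Theorem~\ref{THM:Main Theorem} \emph{verbatim} in order for that theorem to be applied to the evolution of $\psi_N^{\kappa}(0)$. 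With only your approximate version, Theorem~\ref{THM:Main Theorem} is simply not applicable to $\psi_N^{\kappa}(0)$, and ``shrinking $\kappa$ afterwards'' cannot repair this; one would instead need a quantitative stability statement for the GP hierarchy under $O(\kappa^{1/2})$ perturbations of the initial data, which is established nowhere in the paper. The actual proof of (iii) (the paper follows the proof of Proposition 5.1(iii) in \cite{E-S-Y2}) is a genuinely $N$-dependent argument: one shows directly that the energy cut-off perturbs the one-particle marginal only by terms of size $O(N^{\beta-1})$ in 2D (versus $N^{3\beta/2-1}$ in 3D), which vanish as $N\rightarrow\infty$ for each fixed $\kappa$, so that the truncated state condenses onto the \emph{same} $\phi_0$. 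A triangle inequality through $\gamma_N^{(1)}(0)$ cannot see this cancellation, so this part of the proposal is a genuine missing step rather than an alternative route.
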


\begin{proof}
(i) and (ii) follows from \cite[Lemma B.1]{C-HFocusing} and \cite[Lemma B.1]%
{C-HFocusingII}. (iii) follows from the proof of \cite[Proposition 5.1 (iii)]%
{E-S-Y2}. Notice that for two dimension, we get a $N^{\beta }$ instead of a $%
N^{\frac{3\beta }{2}}$ in \cite[(5.20)]{E-S-Y2} and hence we get a $N^{\beta
-1}$ in the estimate of \cite[(5.18)]{E-S-Y2} which goes to zero for $\beta
\in \left( 0,1\right) $.
\end{proof}

Thus we can define an approximation $\psi _{N}^{\kappa }(0)$ of $\psi
_{N}\left( 0\right) $ as in (\ref{def:smooth approximation}). Via (i) and
(iii) of Lemma \ref{Lemma:B2}, $\psi _{N}^{\kappa }(0)$ verifies the
requirements of Theorem \ref{THM:Main Theorem} for small enough $\kappa >0.$
Therefore, for $\gamma _{N}^{\kappa ,(k)}\left( t\right) ,$ the marginal
density associated with $e^{itH_{N}}\psi _{N}^{\kappa }(0)$, Theorem \ref%
{THM:Main Theorem} gives the convergence:%
\begin{equation}
\gamma _{N}^{(k)}(t)\rightarrow \left\vert \phi (t)\right\rangle
\left\langle \phi (t)\right\vert ^{\otimes k}\text{ strongly, }\forall k,t
\label{convergence:smooth}
\end{equation}%
as trace class operators, for all small enough $\kappa >0$.

For $\gamma _{N}^{(k)}\left( t\right) $ in Theorem \ref{THM:Main Theorem},
we notice that, for any test function $f^{(k)}\in \mathcal{K}_{k}\ $and any $%
t\in \mathbb{R}$, we have 
\begin{eqnarray*}
&&\left\vert \limfunc{Tr}f^{(k)}\left( \gamma _{N}^{(k)}\left( t\right)
-\left\vert \phi \left( t\right) \right\rangle \left\langle \phi \left(
t\right) \right\vert ^{\otimes k}\right) \right\vert \\
&\leqslant &\left\vert \limfunc{Tr}f^{(k)}\left( \gamma _{N}^{(k)}\left(
t\right) -\gamma _{N}^{\kappa ,(k)}\left( t\right) \right) \right\vert \\
&&+\left\vert \limfunc{Tr}f^{(k)}\left( \gamma _{N}^{\kappa ,(k)}\left(
t\right) -\left\vert \phi \left( t\right) \right\rangle \left\langle \phi
\left( t\right) \right\vert ^{\otimes k}\right) \right\vert \\
&=&\text{A}+\text{B}.
\end{eqnarray*}%
Convergence (\ref{convergence:smooth}) then takes care of B. To handle A,
part (ii) of Lemma \ref{Lemma:B2} yields 
\begin{equation*}
\left\Vert e^{itH_{N}}\psi _{N}^{\kappa }(0)-e^{itH_{N}}\psi
_{N}(0)\right\Vert _{L^{2}}=\left\Vert \psi _{N}^{\kappa }(0)-\psi
_{N}(0)\right\Vert _{L^{2}}\leqslant C\kappa ^{\frac{1}{2}}
\end{equation*}%
which implies 
\begin{equation*}
A=\left\vert \limfunc{Tr}f^{(k)}\left( \gamma _{N}^{(k)}\left( t\right)
-\gamma _{N}^{\kappa ,(k)}\left( t\right) \right) \right\vert \leqslant
C\left\Vert f^{(k)}\right\Vert _{op}\kappa ^{\frac{1}{2}}.
\end{equation*}%
Since $\kappa >0$ is arbitrary, we deduce that 
\begin{equation*}
\lim_{N\rightarrow \infty }\left\vert \limfunc{Tr}f^{(k)}\left( \gamma
_{N}^{(k)}\left( t\right) -\left\vert \phi \left( t\right) \right\rangle
\left\langle \phi \left( t\right) \right\vert ^{\otimes k}\right)
\right\vert =0,
\end{equation*}%
i.e. 
\begin{equation*}
\gamma _{N}^{(k)}\left( t\right) \rightharpoonup \left\vert \phi \left(
t\right) \right\rangle \left\langle \phi \left( t\right) \right\vert
^{\otimes k}\text{ weak*}
\end{equation*}%
as trace class operators. Notice that the limit has the same trace norm as $%
\gamma _{N}^{(k)}\left( t\right) $ for every $N$, the Gr\"{u}mm's
convergence theorem then upgrades the above weak* convergence to strong.
Thence, we have concluded Theorem \ref{Thm:2D Derivation} via Theorem \ref%
{THM:Main Theorem}.


\begin{thebibliography}{99}
\bibitem{AGT} R. Adami, F. Golse, and A. Teta, \emph{Rigorous derivation of
the cubic NLS in dimension one}, J. Stat. Phys. 127 (2007), 1194--1220.

\bibitem{Ammari2} Z. Ammari and F. Nier, \emph{Mean Field Propagation of
Wigner Measures and BBGKY Hierarchies for General Bosonic States}, J. Math.
Pures. Appl. \textbf{95} (2011), 585-626.

\bibitem{Ammari1} Z. Ammari and F. Nier, \emph{Mean field limit for bosons
and infinite dimensional phase-space analysis}, Ann. H. Poincare \textbf{9}
(2008), 1503--1574.

\bibitem{Anderson} M. H. Anderson, J. R. Ensher, M. R. Matthews, C. E.
Wieman, and E. A. Cornell, \emph{Observation of Bose-Einstein Condensation
in a Dilute Atomic Vapor}, Science \textbf{269 }(1995), 198--201.

\bibitem{BusPer} V.S. Buslaev and G.S. Perelman, On the stability of
solitary waves for nonlinear Schr\"odinger equations. Nonlinear evolution
equations, 75--98, Amer. Math. Soc. Transl. Ser. 2, 164, Amer. Math. Soc.,
Providence, RI, 1995.

\bibitem{SchleinNew} N. Benedikter, G. Oliveira, and B. Schlein, \emph{%
Quantitative Derivation of the Gross-Pitaevskii Equation, }Comm. Pure. Appl.
Math. \emph{68} (2015), 1399--1482.

\bibitem{Cazenave} T. Cazenave, Semilinear Schr\"{o}dinger equations.
Courant Lecture Notes in Mathematics, 10. New York University, Courant
Institute of Mathematical Sciences, New York; American Mathematical Society,
Providence, RI, 2003. xiv+323 pp. ISBN: 0-8218-3399-5

\bibitem{TCNPdeFinitte} T. Chen, C. Hainzl, N. Pavlovi\'{c}, and R.
Seiringer, \emph{Unconditional Uniqueness for the Cubic Gross-Pitaevskii
Hierarchy via Quantum de Finetti}, Commun. Pure Appl. Math. \textbf{68}
(2015), 1845-1884.

\bibitem{TChenAndNpGP1} T. Chen and N. Pavlovi\'{c}, \emph{On the Cauchy
Problem for Focusing and Defocusing Gross-Pitaevskii Hierarchies}, Discrete
Contin. Dyn. Syst. \textbf{27} (2010), 715--739.

\bibitem{TChenAndNP} T. Chen and N. Pavlovi\'{c}, \emph{The Quintic NLS as
the Mean Field Limit of a Boson Gas with Three-Body Interactions}, J. Funct.
Anal. \textbf{260} (2011), 959--997.

\bibitem{TChenAndNPSpace-Time} T. Chen and N. Pavlovi\'{c}, \emph{Derivation
of the cubic NLS and Gross-Pitaevskii hierarchy from manybody dynamics in }$%
d=3$\emph{\ based on spacetime norms}, Ann. H. Poincare, \textbf{15} (2014),
543 - 588.

\bibitem{TCNPNT} T. Chen, N. Pavlovi\'{c}, and N. Tzirakis, \emph{Energy
Conservation and Blowup of Solutions for Focusing Gross--Pitaevskii
Hierarchies}, Ann. I. H. Poincar\'{e} \textbf{27} (2010), 1271-1290.

\bibitem{ChenDie} X. Chen, \emph{Classical Proofs Of Kato Type Smoothing
Estimates for The Schr\"{o}dinger Equation with Quadratic Potential in }$%
R^{n+1}$\emph{\ with Application}, Differential and Integral Equations 
\textbf{24} (2011), 209-230.

\bibitem{Chen2ndOrder} X. Chen, \emph{Second Order Corrections to Mean Field
Evolution for Weakly Interacting Bosons in the Case of Three-body
Interactions\textbf{,} }Arch. Rational Mech. Anal. \textbf{203} (2012),
455-497.

\bibitem{ChenAnisotropic} X. Chen, \emph{Collapsing Estimates and the
Rigorous Derivation of the 2d Cubic Nonlinear Schr\"{o}dinger Equation with
Anisotropic Switchable Quadratic Traps}, J. Math. Pures Appl. \textbf{98}
(2012), 450--478.

\bibitem{Chen3DDerivation} X. Chen, \emph{On the Rigorous Derivation of the
3D Cubic Nonlinear Schr\"{o}dinger Equation with A Quadratic Trap}, Arch.
Rational Mech. Anal. \textbf{210 }(2013), 365-408.

\bibitem{C-H3Dto2D} X. Chen and J. Holmer, \emph{On the Rigorous Derivation
of the 2D Cubic Nonlinear Schr\"{o}dinger Equation from 3D Quantum Many-Body
Dynamics}, Arch. Rational Mech. Anal. \textbf{210 }(2013), 909-954.

\bibitem{C-H2/3} X. Chen and J. Holmer, \emph{On the Klainerman-Machedon
Conjecture of the Quantum BBGKY Hierarchy with Self-interaction}, to appear
in J. Eur. Math. Soc. (JEMS). (arXiv:1303.5385)

\bibitem{C-HFocusing} X. Chen and J. Holmer, \emph{Focusing Quantum
Many-body Dynamics: The Rigorous Derivation of the 1D Focusing Cubic
Nonlinear Schr\"{o}dinger Equation,} 41pp, arXiv:1308.3895.

\bibitem{C-HFocusingII} X. Chen and J. Holmer, \emph{Focusing Quantum
Many-body Dynamics II: The Rigorous Derivation of the 1D Focusing Cubic
Nonlinear Schr\"{o}dinger Equation from 3D,} 48pp, arXiv:1407.8457.

\bibitem{C-H<1} X. Chen and J. Holmer, \emph{Correlation structures,
Many-body Scattering Processes and the Derivation of the Gross-Pitaevskii
Hierarchy,} Int. Math. Res. Notices (2015), 60pp, DOI:10.1093/imrn/rnv228.
(arXiv:1409.1425).

\bibitem{C-PUniqueness} X. Chen and P. Smith, \emph{On the Unconditional
Uniqueness of Solutions to the Infinite Radial Chern-Simons-Schr\"{o}dinger
Hierarchy}, Analysis \& PDE \textbf{7} (2014), 1683-1712.

\bibitem{OriginalDeFinette} M. Christandl, R. Koenig, G. Mitchison, R.
Renner, \emph{One-and-a-half Quantum de Finetti Theorems}, Comm. Math. Phys.
273 (2007), 473-498.

\bibitem{Cornish} S. L. Cornish, N. R. Claussen, J. L. Roberts, E. A.
Cornell, and C. E. Wieman, \emph{Stable }$^{85}$\emph{Rb Bose-Einstein
Condensates with Widely Turnable Interactions, }Phys. Rev. Lett. \textbf{85 }
(2000), 1795-1798.

\bibitem{Davis} K. B. Davis, M. -O. Mewes, M. R. Andrews, N. J. van Druten,
D. S. Durfee, D. M. Kurn, and W. Ketterle, \emph{Bose-Einstein condensation
in a gas of sodium atoms}, Phys. Rev. Lett. \textbf{75 }(1995), 3969--3973.

\bibitem{JILA2} E. A. Donley, N. R. Claussen, S. L. Cornish, J. L. Roberts,
E. A. Cornell, and C. E. Wieman, \emph{Dynamics of Collapsing and Exploding
Bose-Einstein Condensates}, Nature \textbf{412} (2001), 295-299.

\bibitem{E-E-S-Y1} A. Elgart, L. Erd\"{o}s, B. Schlein, and H. T. Yau, \emph{%
\ Gross-Pitaevskii Equation as the Mean Field Limit of Weakly Coupled Bosons,%
} Arch. Rational Mech. Anal. \textbf{179\ }(2006), 265--283.

\bibitem{E-Y1} L. Erd\"{o}s and H. T. Yau, \emph{Derivation of the
Non-linear Schr\"{o}dinger Equation from a Many-body Coulomb System,} Adv.
Theor. Math. Phys. \textbf{5\ }(2001), 1169--1205.

\bibitem{E-S-Y1} L. Erd\"{o}s, B. Schlein, and H. T. Yau, \emph{Derivation
of the Gross-Pitaevskii Hierarchy for the Dynamics of Bose-Einstein
Condensate}, Comm. Pure Appl. Math. \textbf{59} (2006), 1659--1741.

\bibitem{E-S-Y2} L. Erd\"{o}s, B. Schlein, and H. T. Yau, \emph{Derivation
of the Cubic non-linear Schr\"{o}dinger Equation from Quantum Dynamics of
Many-body Systems}, Invent. Math. \textbf{167} (2007), 515--614.

\bibitem{E-S-Y5} L. Erd\"{o}s, B. Schlein, and H. T. Yau, \emph{Rigorous
Derivation of the Gross-Pitaevskii Equation with a Large Interaction
Potential}, J. Amer. Math. Soc. \textbf{22} (2009), 1099-1156.

\bibitem{E-S-Y3} L. Erd\"{o}s, B. Schlein, and H. T. Yau, \emph{Derivation
of the Gross-Pitaevskii Equation for the Dynamics of Bose-Einstein Condensate%
}, Annals Math. \textbf{172} (2010), 291-370.{}

\bibitem{Fibich} G. Fibich, \emph{The nonlinear Schr\"{o}dinger equation.
Singular solutions and optical collapse}. Applied Mathematical Sciences,
192. Springer, Cham, 2015. xxxii+862 pp. ISBN: 978-3-319-12747-7;
978-3-319-12748-4

\bibitem{Frolich} J. Fr\"{o}hlich, A. Knowles, and S. Schwarz, \emph{On the
Mean-Field Limit of Bosons with Coulomb Two-Body Interaction, }Commun. Math.
Phys. \textbf{288} (2009), 1023--1059.

\bibitem{Sohinger} P. Gressman, V. Sohinger, and G. Staffilani, \emph{On the
Uniqueness of Solutions to the Periodic 3D Gross-Pitaevskii Hierarchy, }J.
Funct. Anal. \textbf{266} (2014), 4705--4764.

\bibitem{GM1} M. G. Grillakis and M. Machedon, \emph{Pair excitations and
the mean field approximation of interacting Bosons, I, }Comm. Math. Phys. 
\textbf{324} (2013), 601-636.

\bibitem{GMM1} M. G. Grillakis, M. Machedon, and D. Margetis, \emph{Second
Order Corrections to Mean Field Evolution for Weakly Interacting Bosons. I,}
Commun. Math. Phys. \textbf{294} (2010), 273-301.

\bibitem{GMM2} M. G. Grillakis, M. Machedon, and D. Margetis, \emph{Second
Order Corrections to Mean Field Evolution for Weakly Interacting Bosons. II,}
Adv. Math. \textbf{228} (2011) 1788-1815.

\bibitem{NatureExperiment} Z. Hadzibabic, P. Kr\"{u}ger, M. Cheneau, B.
Battelier and J. Dalibard, \emph{Berezinskii--Kosterlitz--Thouless crossover
in a trapped atomic gas, }Nature, \textbf{441} (2006), 1118-1121.

\bibitem{HoTaXi14} Y.~{Hong}, K.~{Taliaferro}, and Z.~{Xie}, \emph{%
Unconditional Uniqueness of the cubic Gross-Pitaevskii Hierarchy with Low
Regularity}, SIAM J. Math. Anal., \textbf{47} (2015), 3314--3341.

\bibitem{Ketterle} W. Ketterle and N. J. van Druten, \emph{Evaporative
Cooling of Trapped Atoms, }Advances In Atomic, Molecular, and Optical
Physics \textbf{37} (1996), 181-236.

\bibitem{Khaykovich} L. Khaykovich, F. Schreck, G. Ferrari, T. Bourdel, J.
Cubizolles, L. D. Carr, Y. Castin, C. Salomon, \emph{Formation of a
Matter-Wave Bright Soliton}, Science \textbf{296 }(2002), 1290-1293.

\bibitem{KillipVisan} R. Killip and M. Vi\c{s}an, \emph{Nonlinear Schr\"{o}%
dinger equations at critical regularity}. Evolution equations, 325-437, Clay
Math. Proc., 17, Amer. Math. Soc., Providence, RI, 2013.

\bibitem{Kirpatrick} K. Kirkpatrick, B. Schlein and G. Staffilani, \emph{\
Derivation of the Two Dimensional Nonlinear Schr\"{o}dinger Equation from
Many Body Quantum Dynamics}, Amer. J. Math. \textbf{133} (2011), 91-130.

\bibitem{KlainermanAndMachedon} S. Klainerman and M. Machedon, \emph{On the
Uniqueness of Solutions to the Gross-Pitaevskii Hierarchy}, Commun. Math.
Phys. \textbf{279} (2008), 169-185.

\bibitem{Kuz} E. Kuz, \emph{Rate of Convergence to Mean Field for
Interacting Bosons}, Commun. PDE, \textbf{40} (2015), 1831-1854.

\bibitem{KnowlesAndPickl} A. Knowles and P. Pickl, \emph{Mean-Field
Dynamics: Singular Potentials and Rate of Convergence}, Commum. Math. Phys. 
\textbf{298} (2010), 101-138.

\bibitem{Lewin} M. Lewin, P. T. Nam, and N. Rougerie, \emph{Derivation of {H}%
artree's theory for generic mean-field {B}ose systems}, Adv. Math. \textbf{%
254} (2014), 570--621.

\bibitem{LewinFocusing} M. Lewin, P. T. Nam, amd N. Rougerie, \emph{The
mean-field approximation and the non-linear Schr\"{o}dinger functional for
trapped Bose gases,}$\ $arXiv:1405.3220.

\bibitem{LewinFocusing2} M. Lewin, P. T. Nam, amd N. Rougerie, \emph{A Note
on 2D Focusing Many-boson Systems}, arXiv:1509.09045.

\bibitem{Lieb2} E. H. Lieb, R. Seiringer, J. P. Solovej and J. Yngvason, 
\emph{The Mathematics of the Bose Gas and Its Condensation}, Basel,
Switzerland: Birkha\"{u}ser Verlag, 2005.

\bibitem{MichelangeliSchlein} A. Michelangeli and B. Schlein,\emph{\
Dynamical Collapse of Boson Stars}, Commum. Math. Phys. \textbf{311} (2012),
645-687.

\bibitem{Pickl} P. Pickl, \emph{A Simple Derivation of Mean Field Limits for
Quantum Systems}, Lett. Math. Phys. \textbf{97} (2011), 151-164.

\bibitem{RodnianskiAndSchlein} I. Rodnianski and B. Schlein, \emph{Quantum
Fluctuations and Rate of Convergence Towards Mean Field Dynamics}, Commun.
Math. Phys. \textbf{291} (2009), 31-61.

\bibitem{SchneeYngvason} K. Schnee and J. Yngvason, \emph{\ Bosons in
disc-shaped traps: from 3D to 2D}, Comm. Math. Phys. 269 (2007), no. 3,
659--691.

\bibitem{Sohinger2} V.~{Sohinger}, \emph{Local existence of solutions to
Randomized Gross-Pitaevskii hierarchies}, to appear in Trans. Amer. Math.
Soc. (arXiv:1401.0326)

\bibitem{Sohinger3} V.~{Sohinger}, \emph{{A Rigorous Derivation of the
Defocusing Cubic Nonlinear {S}chr\"{o}dinger Equation on }}$\mathbb{T}$\emph{%
{$^{3}$ from the Dynamics of Many-body Quantum Systems}}, Ann. Inst. H.
Poincar\'{e} Anal. Non Lin\'{e}aire, doi:10.1016/j.anihpc.2014.09.005.
(arXiv:1405.3003)

\bibitem{SoSt13} V.~{Sohinger} and G.~{Staffilani}, \emph{Randomization and
the Gross-Pitaevskii hierarchy}, Arch. Rational Mech. Anal. \textbf{218}
(2015), 417-485.

\bibitem{Spohn} H. Spohn, \emph{Kinetic Equations from Hamiltonian Dynamics}%
, Rev. Mod. Phys. \textbf{52} (1980), 569-615.

\bibitem{Streker} K.E. Strecker; G.B. Partridge; A.G. Truscott; R.G. Hulet, 
\emph{Formation and Propagation of Matter-wave Soliton Trains, }Nature 
\textbf{417} (2002), 150-153.

\bibitem{SulemSulem} C. Sulem and P.-L. Sulem, The nonlinear Schr\"odinger
equation. Self-focusing and wave collapse. Applied Mathematical Sciences,
139. Springer-Verlag, New York, 1999. xvi+350 pp. ISBN: 0-387-98611-1

\bibitem{Tao} T. Tao, Nonlinear dispersive equations. Local and global
analysis. CBMS Regional Conference Series in Mathematics, 106. Published for
the Conference Board of the Mathematical Sciences, Washington, DC; by the
American Mathematical Society, Providence, RI, 2006. xvi+373 pp. ISBN:
0-8218-4143-2

\bibitem{Thangavelu} S. Thangavelu, \emph{Lectures on Hermite and Laguerre
Expansions}, Princeton Univ. Press, Priceton, NJ, 1993.

\bibitem{Weinstein} M.I. Weinstein, \emph{Lyapunov stability of ground
states of nonlinear dispersive evolution equations}, Comm. Pure Appl. Math.
39 (1986), no. 1, 51--67.
\end{thebibliography}
\end{document}